\pgfplotsset{compat=1.17}
\definecolor{AUblue}{cmyk}{1,0.8,0,0.15}
\definecolor{AUorange}{cmyk}{0,0.6,1,0}
\definecolor{BrickRed}{HTML}{B6321C}
\definecolor{BurntOrange}{HTML}{F7921D}
\definecolor{ForestGreen}{HTML}{009B55}
\definecolor{drawColor}{gray}{0.20}
\colorlet{nodecolor}{AUblue!10}
\colorlet{col1}{AUblue!20}
\colorlet{col2}{AUblue!40}
\colorlet{col3}{AUblue!60}
\colorlet{col4}{AUblue!80}
\colorlet{col5}{AUblue!100}
\newcommand{\indep}{\perp}
\newcommand{\indepABC}{A \indep B \mid C\;[\Lambda]}
\newcommand{\indepAB}{A \indep B\;[\Lambda]}
\newcommand{\Perp}{\perp \!\!\! \perp}
\DeclarePairedDelimiterX\abs[1]\lvert\rvert{
	\ifblank{#1}{\:\cdot\:}{#1}
}
\DeclarePairedDelimiterX\norm[1]\lVert\rVert{ 
	\ifblank{#1}{\:\cdot\:}{#1}
}
\DeclarePairedDelimiterX{\inner}[2]{\langle}{\rangle}{ 
	\ifblank{#1}{\:\cdot\:}{#1},\ifblank{#2}{\:\cdot\:}{#2}
}
\providecommand\given{}
\DeclarePairedDelimiterX\Set[1]{\lbrace}{\rbrace}{
	\renewcommand\given{\SetSymbol[\delimsize]}
	#1
}
\DeclarePairedDelimiterXPP\Prob[1]{\mathbb{P}}(){}{
	\renewcommand\given{\nonscript\:\delimsize\vert\nonscript\:
		\mathopen{}}
\DeclarePairedDelimiterXPP\Var[1]{\mathrm{Var}}(){}{
	\renewcommand\given{\nonscript\:\delimsize\vert\nonscript\:
		\mathopen{}}
\DeclarePairedDelimiterXPP\Cov[2]{\mathrm{Cov}}(){}{ 
\DeclarePairedDelimiterXPP\Mean[1]{\mathbb{E}}[]{}{
	\renewcommand\given{\nonscript\:\delimsize\vert\nonscript\:
		\mathopen{}}
	#1}
\DeclareMathOperator{\MST}{mst}
\DeclareMathOperator{\argmin}{arg\,min}
\theoremstyle{plain}
\newtheorem{theorem}{Theorem}[section]
\newtheorem{definition}[theorem]{Definition}
\newtheorem{proposition}[theorem]{Proposition}
\newtheorem{lemma}[theorem]{Lemma}
\theoremstyle{remark}
\newtheorem{remark}[theorem]{Remark}
\newtheorem{example}[theorem]{Example}
\newcommand\N{\mathbb{N}}
\newcommand\R{\mathbb{R}}
\newcommand{\diag}{{\rm diag}}
\DeclarePairedDelimiterX\lrangle[1]\langle\rangle{
	\ifblank{#1}{\:\cdot\:}{#1}
}
\newcommand{\ep}{{\epsilon}}
\newcommand\idd{\,\mathrm{d}} 
\newcommand\dd{\mathrm{d}} 
\newcommand\ff{\mathcal{F}}
\newcommand\B{\mathcal{B}} 
\newcommand\E{\mathcal{E}}
\newcommand\G{\mathcal{G}}
\newcommand\HR{H\"usler--Reiss}
\newcommand{\p}{\mathbb P}
\newcommand{\e}{\mathbb E}
\renewcommand{\a}{\alpha}
\newcommand{\ind}[1]{\mbox{\rm\large 1}_{{#1}}}
\newcommand{\phT}{{\mathrm{ph}_T}}
\NewDocumentCommand\convd{s}{%
  \IfBooleanTF{#1}{
    \xrightarrow{\scriptscriptstyle\smash{d}}
  }{
    \xrightarrow{\ d\ }
  }
}
\NewDocumentCommand\cip{s}{%
  \IfBooleanTF{#1}{
    \xrightarrow{\scriptscriptstyle\smash{\p}}
  }{
    \xrightarrow{\ \p\ }
  }
}
\NewDocumentCommand\vaguely{s}{%
  \IfBooleanTF{#1}{
    \xrightarrow{\scriptscriptstyle\smash{v}}
  }{
    \xrightarrow{\ v\ }
  }
}
\NewDocumentCommand\ph{m}{
  \tl_set:Nn \l_tmpa_tl { #1 }
  \regex_replace_once:nnN { \A\s*\(\s* } {  } \l_tmpa_tl
  \regex_replace_once:nnN { \s*\)\s*\z } {  } \l_tmpa_tl 
  \sp{\smash{(}{\tl_use:N \l_tmpa_tl}\smash{)}}
}
\newcommand\indnew[1]{\operatorname{\mathbf{1}}{#1}}
\newcommand\deq{\stackrel{\smash{\scriptscriptstyle d}}{=}}
\NewDocumentCommand\For{sm}{
  \IfBooleanF{#1}{\quad}
  \quad\text{#2}
}
\NewDocumentCommand\qtq{sm}{
  \IfBooleanTF{#1}{\enspace}{\quad}
  \text{#2}
  \IfBooleanTF{#1}{\enspace}{\quad}
}
\title{L\'evy graphical models}
\author{Sebastian Engelke}
\address{Research Institute for Statistics and Information Science, University of Geneva, Geneva, Switzerland}
\author{Jevgenijs Ivanovs}
\address{Department of Mathematics, Aarhus University, Aarhus, Denmark}
\author{Jakob D. Th{\o}stesen}
\address{Department of Mathematics, Aarhus University, Aarhus, Denmark}
\begin{document}

\begin{abstract}
	Conditional independence and graphical models are crucial concepts for sparsity and statistical modeling in higher dimensions. For L\'evy processes, a widely applied class of stochastic processes, these notions have not been studied. By the L\'evy--Itô decomposition, a multivariate L\'evy process can be decomposed into the sum of a Brownian motion part and an independent jump process. We show that conditional independence statements between the marginal processes can be studied separately for these two parts. While the Brownian part is well-understood, we derive a novel characterization of conditional independence between the sample paths of the jump process in terms of the L\'evy measure. We define L\'evy graphical models as L\'evy processes that satisfy undirected or directed Markov properties. We prove that the graph structure is invariant under changes of the univariate marginal processes. L\'evy graphical models allow the construction of flexible, sparse dependence models for L\'evy processes in large dimensions, which are interpretable thanks to the underlying graph. For trees, we develop statistical methodology to learn the underlying structure from low- or high-frequency observations of the L\'evy process and show consistent graph recovery. We apply our method to model stock returns from U.S.~companies to illustrate the advantages of our approach.
\end{abstract}

\maketitle

\section{Introduction}

A L\'evy process is an $\mathbb R^d$-valued stochastic process $\mathbf{X} = (X(t))_{t \geq 0}$ with 
independent and stationary increments that satisfies $X(0) = 0$ 
almost surely. For any fixed $t\geq 0$, the univariate marginal distribution of 
$X(t)$ is infinitely divisible and, thus, it arises as the limit of row-sums of 
a triangular array \citep[Chapter 7]{kallenberg3}. For this reason, L\'evy processes are fundamental objects studied
in limit theory and they arise naturally in numerous settings. 
The L\'evy--Itô decomposition states that any L\'evy process can be decomposed
into two independent processes,
\[ \mathbf X = \mathbf W + \mathbf J,\] 
where the continuous part $\mathbf{W} = (W(t))_{t \geq 0}$ is a Brownian motion with covariance $\Sigma$ and drift $\gamma \in \mathbb R$, and the jump process $\mathbf{J} = (J(t))_{t \geq 0}$ is described by the so-called L\'evy measure $\Lambda$.

In the last decades, the probabilistic properties of L\'evy processes have been extensively studied, ranging from detailed analysis of the sample path behavior \citep{blu1961}, over stochastic differential equations \citep{Fasen_2005, bol2019}, to applications in physics~\citep{woy2001}, finance \citep{tankov2003financial} and other areas \citep{krz2015}. 
Surprisingly, there is little known about conditional independence in this class of stochastic processes. In probability theory and statistics, conditional independence is a crucial notion of irrelevance that is at the heart of many fields such as graphical models and causality \citep[e.g.,][]{lauritzen96, wainwright2008}. It further allows the definition of sparsity and is therefore the backbone for modern theory and statistical methods in larger dimensions.

One reason for the lacking connection is that the seemingly most natural definition of conditional independence for a L\'evy process does not lead to useful characterizations. Indeed, for disjoint subsets $A,B,C \subseteq \{1,\dots , d\}$ and a fixed time point $t>0$, one might be tempted to consider the conditional independence 
$X_A(t) \Perp X_B(t) \mid X_C(t)$.
While such a statement is of course well-defined, we will argue that it is neither natural nor useful for theory or practice. Intuitively, the issue is that conditioning only on the vector
$X_C(t)$ collapses important information on the stochastic process up to time $t$ into a single vector. 

Instead, we propose to study conditional independence on the level of sample paths. As a first fundamental result, we show the (non-trivial) equivalence 
\begin{equation}\label{CI_decomp}
	\mathbf{X}_A\Perp\mathbf{X}_B\mid\mathbf{X}_C\qtq{$\Leftrightarrow$}\mathbf{W}_A\Perp\mathbf{W}_B\mid\mathbf{W}_C\qtq*{and}\mathbf{J}_A\Perp\mathbf{J}_B\mid\mathbf{J}_C,
\end{equation}	
that is, the characterization of conditional independence of the process $\mathbf{X}$ can be separated into the corresponding statements for the Brownian and jump parts.
Conditional independence for the Brownian motion part is well understood: if the corresponding covariance matrix $\Sigma$ is invertible, then any conditional independence statement can be read off from the precision matrix $\Sigma^{-1}$.

Our main result characterizes conditional independence of the jump part $\mathbf J$. Since this part of the L\'evy process is described by the L\'evy measure $\Lambda$, we would like to describe stochastic properties in terms of this object. In fact, it turns out that exactly this is possible for conditional independence statements. Under a mild condition on $\Lambda$ we show that 
for the jump process we have
\begin{equation}\label{CI_jump}
	\mathbf{J}_A\Perp\mathbf{J}_B\mid\mathbf{J}_C\qtq{$\Leftrightarrow$} A\perp B\mid C\,[\Lambda],
\end{equation}
where the conditional independence notion on the right-hand side is a natural generalization of classical conditional independence to infinite measures exploding at the origin \citep{eng_iva_kir}.
This provides an effective tool to study conditional independence at the stochastic process level, to construct concrete examples and develop statistical methodology for sparse L\'evy processes by considering the simpler object $\Lambda$.

Our theory allows us to define graphical models for L\'evy processes, a mostly unexplored field with many open questions and large potential for statistical applications. 
For an undirected graph $G=(V,E)$ with nodes $V$ and edges $E$, a L\'evy graphical model is defined by the global Markov property \citep{lauritzen96}, which implies that whenever there is no edge between two nodes they are conditionally independent given the remaining nodes of $G$ in the sense of~\eqref{CI_jump}. The left-hand side of Figure~\ref{fig:3D_path} shows a realization of a 3-dimensional L\'evy process $\mathbf{X}$. Clearly, neither of the three components are independent since the paths partially exhibit simultaneous jumps. While not obvious from visual inspection of the plot, the processes $\mathbf{X}_1$ and $\mathbf{X}_3$ are conditionally independent given $\mathbf{X}_2$.
This example can be generalized to construct flexible, sparse dependence models for stochastic processes in large dimensions. Indeed, it suffices to specify the lower-dimensional interactions on smaller parts of the graph and the conditional independence structure implies a full $d$-dimensional model.

A popular approach for parametric modelling of the dependence structure of a multi-dimensional L\'evy processes is the concept of L\'evy copulas introduced in \cite{tankov2003financial} and \cite{kallsen_tankov}. Our L\'evy graphical models are complementary to these methods and offer several advantages. They are structural approaches allowing for interpretation of the graph structure~$G$, efficient computations for statistical inference and constructions in high dimensions by relying on lower-dimensional sub-models. 
The two approaches are compatible since we show that the conditional independence statements in~\eqref{CI_jump} are invariant under the behavior of the marginal, univariate processes and only depend on the L\'evy copula.

\begin{figure}[tb]
\centering
\includegraphics[width=0.48\textwidth]{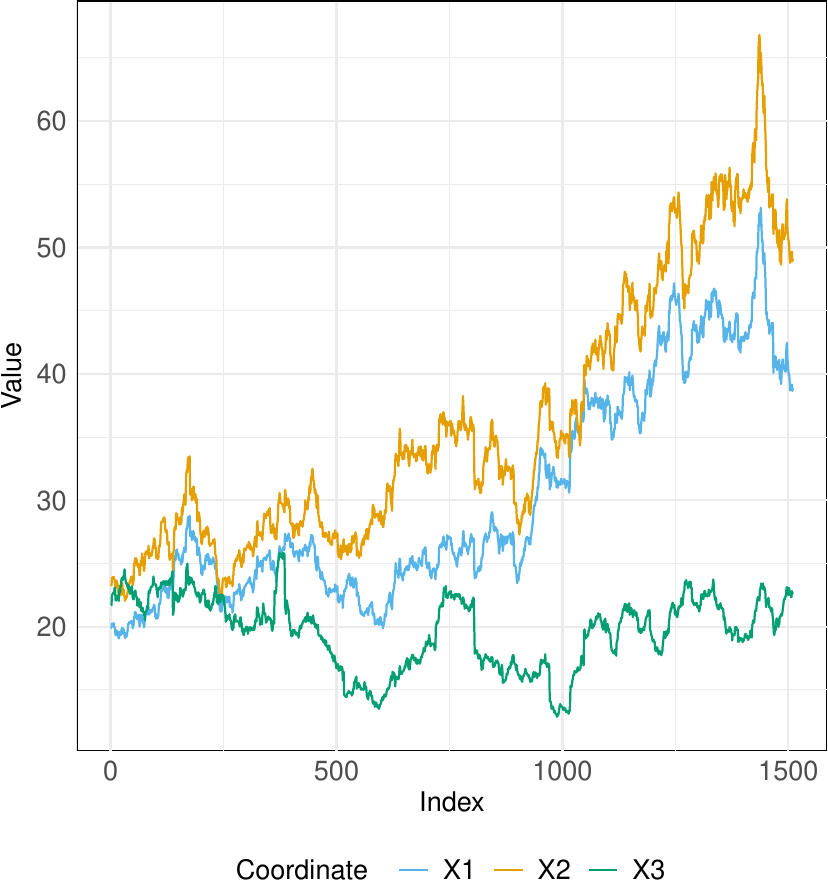}\hspace*{1em}
\includegraphics[width=0.48\textwidth]{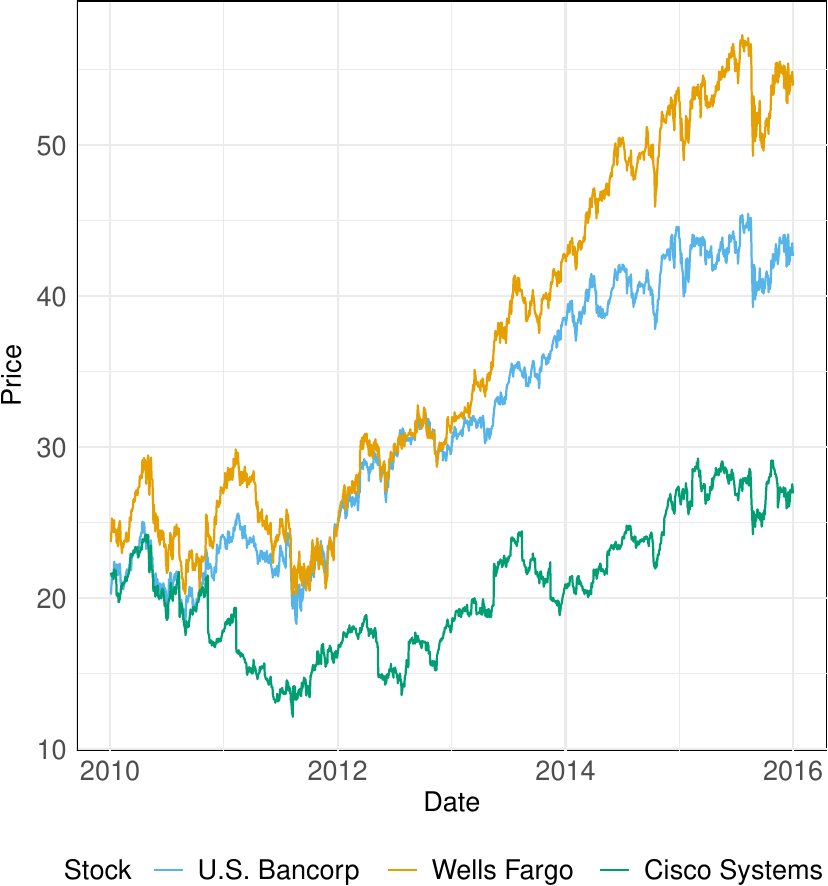} 
\caption{Left: Realization of a 3-dimensional L\'evy graphical model $\mathbf X$ on graph $G = (\{1,2,3\}, \{(1,2), (2,3)\})$ with conditional independence $\mathbf{X}_1\Perp\mathbf{X}_3\mid\mathbf{X}_2$. Right: Daily stock prices of three U.S.~companies used in the data application.}
\label{fig:3D_path}
\end{figure}

While the notion of L\'evy graphical models is applicable to general graph structures $G$, in this paper we concentrate on model construction and statistical inference for the class of trees, that is, connected graphs without cycles. Relying only on bivariate interactions, L\'evy tree models highlight the power of our approach in constructing simple yet realistic dependence models for applications. In particular, we show that the underlying tree structure $T$ can be estimated non-parametrically from the data in both classical asymptotic and high-frequency settings. We prove consistency of this tree estimator $\widehat T$ for a large class of L\'evy processes as the number of observed time points goes to infinity.

We apply our methodology to model dependence between the stock price processes of $d=16$ U.S.~companies. The right-hand side of Figure~\ref{fig:3D_path} shows the daily observations of three of those price processes. Our analysis of this data and the estimated tree on these 16 companies yield, for instance, the conditional independence between the paths of U.S.~Bancorp and Cisco Systems given the information of the sample path of Wells Fargo. The simulations on the left-hand side of this figure are samples of our fitted model. They seem to capture well the dependence features of the real data and can therefore be used for stress testing, option pricing or other financial downstream tasks \citep[e.g.,][Chapter III]{tankov2003financial}.

We first review some existing notions of dependence for stochastic processes in Section~\ref{review}.
Section~\ref{sec:prelim} provides necessary background on L\'evy processes and discusses the notions of standardized L\'evy measures and L\'evy copulas. The first fundamental results on conditional independence for L\'evy processes are derived in Section~\ref{levy_CI}. We also study the difference of conditioning on the process at a fixed time point or on the whole sample path. Section~\ref{sec:jump_part} defines a suitable notion of conditional independence for the L\'evy measure $\Lambda$ and links this to the conditional independence of the sample paths of the jump process $\mathbf J$ in~\eqref{CI_decomp}. This conditional independence is shown to be invariant under changes of the marginal processes. In Section~\ref{sec:levy_GM} we define directed and undirected L\'evy graphical models and propose a tractable model on tree structures. The statistical inference, including consistent tree recovery based on low- and high-frequency observations of the L\'evy process, is presented in Section~\ref{sec:structure_learning_trees}. Section~\ref{sec:experiments} contains a simulation study to assess the performance of our tree learning algorithm and an application of L\'evy graphical models to stock returns from U.S. companies.

The proofs of all results can be found in the appendix.
The data and R code to reproduce the results and figures of this paper are available at \url{https://github.com/sebastian-engelke/levy_graphical_models}.

\subsection{Dependence models for stochastic processes}\label{review}

Modeling dependence between different components of a multi-dimensional stochastic process is a natural problem and has been studied in many settings. We give a short overview over some of the approaches with a focus on concepts that aim for structural models with interpretable and intuitive representations. 

A popular concept is Granger causality introduced in \citet{granger_69}. The general idea is to study how information about one component of a time series up to time $n-1$ influences prediction of the value of another component at time $n$. Granger causality has been very successful and led to other works such as \citet{eichler_12}, who introduce graphical models for multivariate time series. A generalization of Granger causality to continuous time is known as local independence, which describes how information of one coordinate influences the infinitesimal increment of another coordinate. Local independence is considered for various classes of stochastic processes. For instance, \citet{didelez_08} define graphical models for marked point processes and \citet{mog_han_22} study diffusion models with correlated driving noise.

Granger causality and local independence are both based on dependence in time. These concepts therefore lead to trivial statements for L\'evy processes, which have independent and stationary increments. Indeed, the evolution of a L\'evy process after a fixed time point $T>0$ is independent of the process up to time $T$. In this paper we therefore concentrate instead on dependence in space, that is, the dependence structure of jumps in the different coordinates occurring at the same time. 

The notion of $\alpha$-stable graphical models is introduced in \citet{mis_kur_16} and \cite{muvunza2024cauchy}. These models describe a construction of an $\alpha$-stable random vector on a directed acyclic graph. We show that the associated L\'evy processes are indeed a special case of our L\'evy graphical models. However, only spectrally discrete processes appear through this construction and the corresponding processes are fairly basic; for instance, the L\'evy measure cannot admit Lebesgue densities. Our general theory allows for much broader classes of models.

To model dependence of the jumps of a L\'evy process \citet{kallsen_tankov} use L\'evy copulas, an extension of copulas for random vectors to the level of stochastic processes. A somewhat similar approach are the so-called Pareto L\'evy measures in \citet{eder_klup}. Both works choose to model the dependence via the L\'evy measure, which links it to the approach presented in our paper. The concepts of copulas and graphical models are however fundamentally different: the former aims at constructing parametric models in typically low dimensions, whereas the latter is a more structured approach for interpretable and sparse models in possibly high dimensions.

\section{Preliminaries}\label{sec:prelim}

This section provides a brief introduction to L\'evy processes and the concept of standardized L\'evy measures.
Throughout we denote the index set by $V = \Set{1,\dotsc,d}$, which will later be the set of vertices in our graphical models. We often abbreviate sets of points $x\in\R^d$ and write, for instance, $\Set{x\in A}=\Set{x\in\R^d : x\in A}$ and~$\Set{x_1\geq1} = \Set{x\in\R^d : x_1\geq1}$. We denote stochastic processes $\mathbf X = (X(t))_{t \geq 0}$ in bold face, and random variables and vectors such as $X(1)$, the process at time 1, in normal text. For an index set $I\subset V$, we write $y_I = (y_i)_{i\in I}$ for subvectors of $y\in \mathbb R^d$. The Borel sets of $\mathbb R^d$ are denoted by $\mathcal B(\R^d)$.

\subsection{L\'evy process}
\label{sec:levy_def}

Let $\mathbf{X}=(X(t))_{t\geq0}$ be a stochastic process defined on some probability space $(\Omega,\ff,\p)$, and taking values in $\R^d$. 
The space of càdlàg functions is defined as $\mathbb D = \{f:[0,\infty)\to \mathbb R^d: f \text{ is right continuous with left limits} \}$ and is a Polish space when endowed with the Skorokhod topology \citep[][Theorem~1.14]{jacod_shiryaev}. We say that $\mathbf{X}$ is a L\'evy process if it satisfies the following conditions:
\begin{enumerate}[(i)]
\item $\p(X(0)=0)=1$.
\item For $n\in\N$ and $0\leq t_0<t_1<\dotsm<t_n$ the increments $X(t_1)-X(t_0),\dotsc,X(t_n)-X(t_{n-1})$ are independent.
\item For $s,t\geq 0$, the distribution of $X(t+s)-X(s)$ does not depend on $s$.
\item The sample path $t\mapsto X(t)$ is càdlàg $\p$-almost surely.
\end{enumerate}
L\'evy processes have been extensively studied and we refer to \citet{bertoin96}, \citet{sato99} and \citet{applebaum} for some classical reference books.

An important consequence of the above definition is that the distribution of $X(1)$ is infinitely divisible. Its characteristic function is therefore given by the L\'evy--Khintchine formula \citep[e.g.,][Theorem~8.1]{sato99}
\begin{equation*}
\Mean{e^{i\inner{u}{X(1)}}}=\exp\left(i\inner{u}{\gamma}-\frac{1}{2}\inner{u}{\Sigma u}+\int_{\R^d}e^{i\inner{u}{x}}-1-i\inner{u}{x}\ind{\Set{\norm{x}\leq1}}\,\Lambda(\dd x)\right),\quad u\in\R^d,
\end{equation*}
where $\gamma\in\R^d$, $\Sigma$ is a positive semidefinite $d\times d$ matrix, $\norm{\cdot}$ is a norm on $\R^d$, and the so-called L\'evy measure $\Lambda$ is a measure on $\R^d$ and satisfies
\begin{equation}\label{eq:integrability_cond}
\int_{\R^d}(1\wedge\norm{x}^2)\,\Lambda(\dd x)<\infty.
\end{equation}
In particular, $\Lambda(A)<\infty$ for any Borel set $A\in\B(\R^d)$ bounded away from $0$. On the other hand, the L\'evy measure can explode at the origin, in which case $\Lambda(\R^d) = \infty$ and $\mathbf X$ has infinitely many jumps before any time point $t>0$. 
We say that $\mathbf{X}$ has characteristic triplet $(\gamma,\Sigma,\Lambda)$. 
 While any L\'evy process gives rise to an infinitely divisible distribution, the converse is also true. Indeed, for any infinitely divisible distribution $\nu$ on $\R^d$ there exists a L\'evy process $\mathbf{X}$ such that~$X(1)\sim\nu$. 

The L\'evy--Itô decomposition provides a stochastic representation of the L\'evy process $\mathbf X$ with characteristic triplet $(\gamma,\Sigma,\Lambda)$ as
\[\mathbf{X}=\mathbf{W}+\mathbf{J},\]
where $\mathbf{J}$ is a L\'evy process with characteristic triplet $(0,0,\Lambda)$, and $\mathbf{W}$ is a Brownian motion with drift $\gamma$ and covariance matrix $\Sigma$, which is independent of $\mathbf{J}$. Since $\mathbf{W}$ is almost surely everywhere continuous, it is common to refer to $\mathbf{W}$ and~$\mathbf{J}$ as the Brownian part and the jump part of $\mathbf{X}$, respectively. Importantly, the terms $\mathbf{W}$ and~$\mathbf{J}$ may be constructed as almost sure limits of certain processes $\mathbf{W}\ph{(n)}$ and $\mathbf{J}\ph{(n)}$ created from $\mathbf{X}$ \citep[e.g.,][\S2.4]{applebaum}; see Appendix~\ref{sec:decomp_cond} for details. This fact will be used to prove the result~\eqref{CI_decomp} later.

We give some typical examples of $d$-dimensional L\'evy processes; see also Figure~\ref{fig:Levy_example} for an illustration in one dimension $d=1$.
\begin{example}\label{ex:BM}
	A L\'evy process $\mathbf X$ with characteristic triplet $(\gamma,\Sigma,0)$ can be written as 
	\[ X(t) = \gamma t + B(t), \quad t \geq 0,\]
	where $\mathbf B$ is a zero-mean Brownian motion with covariance matrix of $W(1)$ given by $\Sigma$.
\end{example}

\begin{example}
	Suppose that the L\'evy measure satisfies $\Lambda(\R^d) < \infty$. We say that $\mathbf X$ is a compound Poisson process with rate $\eta = \Lambda(\R^d)$ and jump distribution $\mu(\cdot) = \Lambda(\cdot) / \Lambda(\R^d)$ if the characteristic triplet is $(\gamma,0,\Lambda)$ with $\gamma=\int_{\Set{\norm{x}\leq1}}x\,\Lambda(\dd x)$.
\end{example}

\begin{example}\label{ex:stable}
	A L\'evy process $\mathbf{X}$ is said to be $\alpha$-stable with stability index $\alpha \in(0,2]$, if for any $h>0$ there exists $c\in\R^d$ such that
	\begin{equation}\label{eq:stable}
		X(h)\deq h^{1/\alpha} X(1)+c.
	\end{equation}
	If this holds with $c = 0$ for all values of $h>0$ we say that $\mathbf{X}$ is strictly stable. For $\alpha=2$ the process $\mathbf{X}$ is $\alpha$-stable if and only if it is a Brownian motion with characteristic triplet $(\gamma,\Sigma,0)$. For $\alpha\in(0,2)$,  $\mathbf{X}$ is $\alpha$-stable if and only if its triplet is of the form $(\gamma,0,\Lambda)$ and the L\'evy measure is $-\alpha$-homogeneous, that is, $\Lambda(hE)=h^{-\alpha}\Lambda(E)$ for any $h>0$ and $E\in\B(\R^d)$. In this case, $\Lambda(\R^d) = \infty$. 
\end{example}

\begin{figure}[tb]
\begin{tikzpicture}
\matrix[column sep=.1cm]{
\begin{axis}[name=plot2,scale only axis,width=4.3cm,height=3cm,ymajorticks=false,xmin=0,xmax=1,ymin=-0.8,ymax=0.8,title=Brownian motion,ytick pos=left,xtick pos=bottom, xtick={0,1},title style = {text depth=0.5ex}]
\addplot+ [mark=none, color=AUblue]    table[x="X1",y="X2",col sep=comma] {BM.csv};
\end{axis}
&
\begin{axis}[name=plot1,scale only axis,width=4.3cm,height=3cm,ytick={0},xmin=0,xmax=1,xtick={0,1},ymin=-0.8,ymax=0.8,title=Compound Poisson process,ytick pos=left,xtick pos=bottom,title style = {text depth=0.5ex}]
	\addplot+ [jump mark left,mark options={fill=AUblue, scale=.5, shape=circle}, color=AUblue, thick]    table[x="X1",y="X2",col sep=comma] {CPP.csv};
	\draw [color=AUblue,thick] (axis cs: 0.978075725259259,-0.70794207580332) to (axis cs: 1,-0.70794207580332);
	\end{axis}
	&	
\begin{axis}[name=plot3,scale only axis,width=4.3cm,height=3cm,ymajorticks=false,xmin=0,xmax=1,ymin=-0.8,ymax=0.8,title=1-stable process,ytick pos=left,xtick pos=bottom, xtick={0,1},title style = {text depth=0.5ex}]
\addplot+ [color=AUblue, only marks,mark options={scale=.1}]    table[x="X1",y="X2",col sep=comma] {Cauchy.csv};
\end{axis}
\\
};
\end{tikzpicture}
\caption{Samples paths of three L\'evy processes corresponding to the Examples~\ref{ex:BM}-\ref{ex:stable} in dimension $d=1$.}
\label{fig:Levy_example}
\end{figure}
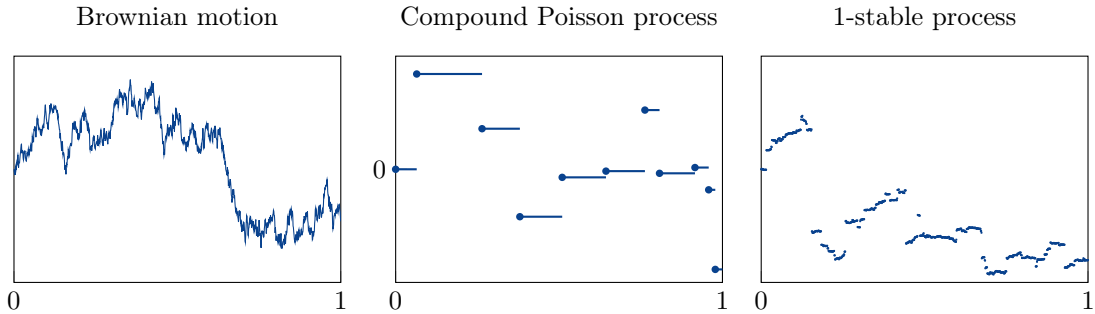

\subsection{Standardized L\'evy measure}
\label{sec:copula}

L\'evy processes $\mathbf X$ with L\'evy measure $\Lambda$ have been widely applied to model data in finance \cite{tankov2003financial} and many other fields.
While the continuous part $\mathbf W$ is fully characterized by $\gamma$ and $\Sigma$, the jump part $\mathbf J$ is more complex and cannot be described by a finite-dimensional parameter space. 
For statistical modeling it is therefore useful to have flexible parametric families of L\'evy measures. To this end, it is desirable to entangle the marginal distributions from the dependence structure.
The notion of copulas is applied for classical multivariate analysis for this purpose \citep[e.g.,][]{joe2014dependence}. For multivariate L\'evy processes, however, the copula $C_t$ of the random vector $X(t)$ is in general not time-invariant for different $t>0$ \citep{tankov_phd}.
Therefore, 
\cite{kallsen_tankov} introduce the L\'evy copula, which separates  the dependence structure of $\mathbf X$ from the distribution of the marginal processes, and characterizes it in terms of the L\'evy measure~$\Lambda$.

The L\'evy measure $\Lambda$ can have mass on the different orthants of $\mathbb R^d$, defined for any $s \in \{+,-\}^d$ as
\begin{align}\label{def_orthant}
	\mathcal O_s = \{x \in \mathbb R^d: s_i x_i > 0 \text{ or } x_i=0 \text{ if } s_i=+\}.
\end{align}
Note that with this definition, the orthants are disjoint, since each sub-face is contained in exactly one of them. We define the marginal tail functions $U_i:\mathbb R \to [0,\infty]$ for $i\in V$ as
\begin{align}
	\label{U_def} U_i(x) = \begin{cases}
		\Lambda_i(y \geq x), & x \geq 0,\\
		\Lambda_i(y \leq -x), & x < 0.
	\end{cases}
\end{align}
It follows from the theory of L\'evy copulas in \citet[Theorem 3.6]{kallsen_tankov} that there exists a L\'evy measure $\Lambda^*$ with
\begin{align}\label{levy_std1}
	\Lambda(y \in\mathbb R^d: s_i y_i \geq s_i x_i \text{ for } i\in V) = \Lambda^*(y \in\mathbb R^d: s_i y_i \geq s_i/U_i(x_i) \text{ for } i\in V), \quad x\in \mathcal O_s,
\end{align}
which satisfies the marginal constraints $\Lambda^*(y: y_i > x) = \Lambda^*(y: y_i < -x) = 1/x$ for $x > 0$ and $i\in V$.
We call $\Lambda^*$ a standardized L\'evy measure associated with $\Lambda$; see also \cite{eder_klup} for a similar concept on the positive orthant. The corresponding standardized L\'evy process $\mathbf X^*$ has Cauchy marginal processes and the same dependence structure (L\'evy copula) as the original process $\mathbf X$. Conversely, any standardized L\'evy measure $\Lambda^*$ together with any set of marginal tail functions $U_i$ of real-valued L\'evy processes, $i\in V$, define a L\'evy measure $\Lambda$ through~\eqref{levy_std1}.
The link with the L\'evy copula $C$ is explicitly given by~\citet[][Equation (3.2)]{kallsen_tankov} as
\begin{align}
	\Lambda^*(y \in\mathbb R^d: s_i y_i \geq s_ix_i \text{ for } i\in V) = \prod_{i\in V} \text{sign}(x_i) C(1/x_i, \dots, 1/x_d), \quad x\in\mathcal O_s.
\end{align}
We prefer to work with $\Lambda^*$, since we believe the interpretation is easier and it avoids notational difficulties.

\begin{remark}
	The standardized L\'evy measure, or, equivalently, the L\'evy copula, may not be unique for two reasons: if the marginal tail functions $U_i$ are not continuous for some $i\in V$; or if the marginal L\'evy measures do not explode at the origin, that is, either $\Lambda(x_i>0)<\infty$ or $\Lambda(x_i<0)<\infty$. In both cases, the range of the function $U_i$ on $x<0$ or $x>0$ is not equal to $(0,\infty)$ and therefore $\Lambda^*$ is not uniquely defined on certain parts of the domain. The first reason for non-uniqueness is similar to classical copulas. The second one only appears for L\'evy copulas. In most models used in practice, the marginal tail functions will be continuous and exploding, in which case $\Lambda^*$ and $C$ are unique.	
\end{remark}

In view of this remark, we will usually assume that each marginal process has infinite activity, that is, $\Lambda(x_i \neq 0) =\infty$ for all $i\in V$. In this case, \citet[][Theorem 27.4]{sato99} states that $F_{ti}$, the marginal distribution function of $X_i(t)$, is continuous for every $t>0$. Consequently, for any $t>0$, the ordinary copula $C_t$ of $X(t)$ is unique.
Interestingly, the standardized L\'evy measure $\Lambda^*$ can be obtained as the limit of the ordinary copulas of the L\'evy process $X(t)$ as $t\to 0$. Indeed, by \citet[Theorem 5.1]{kallsen_tankov}, we have 
\begin{align}\label{cop_approx}
	\Lambda^*(y: y_i \geq x_i \text{ for } i\in V) = \lim_{t\to 0} t^{-1} \mathbb P\left\{F_{ti}(X_i(t)) > 1 - t/x_i \text{ for } i\in V \right\}, \quad x > 0.
\end{align}
Similarly, we can obtain $\Lambda^*$ on the other orthants. 
This fact will be crucial for statistical inference since it provides a way to estimate the standardized L\'evy measure from observations of the process~$\mathbf X$. 

We give some examples for popular parametric classes of standardized L\'evy measures. To simplify notation, we concentrate here on models for the positive quadrant; general models can be defined similarly.  

\begin{example}\label{ex:log}
	The family of Clayton L\'evy measures with 1-stable margins is parameterized by $\theta \in (0,1)$. It is given by 
	\begin{align*}
		\Lambda^*(y: y_1 \geq x_1, \dots, y_d \geq x_d ) =  \left( x_1^{\theta}+\dots+ x_d^{\theta}\right)^{-1/\theta}, \quad x \geq 0;
	  \end{align*}
	  see \citet[][Example 6.2]{kallsen_tankov}.
\end{example}

One of the most popular models for exponent measures for multivariate extreme value distributions is the H\"usler--Reiss family \citep{HR1989}. This readily yields a corresponding family of L\'evy measures on the positive orthant. 
\begin{example}\label{ex:HR}
	The family of H\"usler--Reiss distributions is parametrized by a symmetric, strictly conditionally negative definite matrix $\Gamma$ in the cone
	\begin{align}
		\mathcal C^d
		=
		\left\{
			\Gamma \in \mathbb R^{d\times d}:		
			\Gamma = \Gamma^\top
			,\;\;
			\diag{\Gamma} = {0_d}
			,\;\;
			v^\top \Gamma v < 0
			\;\forall\,
			{0_d} \neq v \perp {1_d}
		\right\},
	\end{align}	
	where $0_d$ and $1_d$ are vectors in $\mathbb R^d$ with all zero and one entries, respectively.
	Equivalently, the model can be parametrized by the positive semi-definite precision matrix $\Theta = (P (-\Gamma/2) P)^+$, where $P = I_d - {1_d}{1_d}^\top /d$ is the projection matrix onto the orthogonal complement of ${1_d}$, and $A^+$ is the Moore--Penrose pseudoinverse of a matrix $A$; see \cite{hen_eng_seg} for details. We define the H\"usler--Reiss L\'evy measure $\Lambda^*$ through its density 
	\begin{align}\label{HR_density}
        \lambda^*(y)
        &\propto
	\prod_{i=1}^d y_i^{-1-1/d} \exp\left\{ -\frac12 (\log y - \mu_\Theta)^\top \Theta (\log y - \mu_\Theta) \right\}, \quad y \geq 0,
    \end{align}
	where $\mu_\Theta = P (-\Gamma/2) 1_d$.	
\end{example}

For more examples of parametric families of L\'evy measures we refer to \cite{kallsen_tankov}, \cite{bar2007}, \cite{klu2008} and  \cite{eder_klup}.

\section{Conditional independence for L\'evy processes}\label{levy_CI}

For random variables $X,Y,Z$, defined on some probability space $(\Omega,\ff,\p)$ and taking values in Polish spaces~$S_X,S_Y,S_Z$, we say that $X$ is conditionally independent of $Y$ given $Z$ if
\begin{equation*}
\Prob{X\in E_X, Y\in E_Y\given Z}=\Prob{X\in E_X\given Z}\Prob{Y\in E_Y\given Z}
\end{equation*}
almost surely for all measurable sets $E_X$ and $E_Y$. If this holds we write $X\Perp Y\mid Z$. This definition is general enough to cover cases of random vectors with values in $\mathbb R^d$ or stochastic processes with c\`adl\`ag sample paths in $\mathbb D$, for instance, since both are Polish spaces. For more details and properties of conditional independence we refer to \citet[Chapter~8]{kallenberg3}.

For disjoint subsets $A,B,C\subseteq V = \{1,\dots, d\}$ we want to study the conditional independence of a $d$-dimensional L\'evy process $\mathbf X$ between its components indexed by $A$ and $B$ given the components in $C$. Since random objects here are random processes, there are at least two ways to think about this conditional independence.
The first, and as we will argue, more natural perspective, requires
\begin{equation}\label{CI_process}
	\mathbf{X}_A\Perp\mathbf{X}_B\mid\mathbf{X}_C.
\end{equation}
This statement is on the level of the sample paths of the L\'evy process where we condition on the $\sigma$-algebra generated by the process $\mathbf X_C$, that is, the whole information of the evolution of components in $C$.
Alternatively, one may consider the corresponding conditional independence statement at a fixed time $t_0 > 0$, namely
\begin{equation}\label{CI_fixed}
	X_A(t_0)\Perp X_B(t_0)\mid X_C(t_0).
\end{equation}
Here, the conditioning includes only information in the $\sigma$-algebra $\sigma(X_C(t_0))$ generated by the components of the L\'evy process indexed by $C$ at the fixed time $t_0$. The latter is much smaller than $\sigma(\mathbf{X}_C)$. Indeed, we note that not only does $X_C(t_0)$ tell us nothing about the future $(X_C(t_0+t)-X_C(t_0))_{t\geq0}$, but, in general, it also provides insufficient information about the fluctuations and jumps up to time $t_0$.
Therefore, the notion in~\eqref{CI_fixed} seems unsuitable for applications and, moreover, it is unclear how it can be easily characterized. This is also in line with the discussion in \cite{kallsen_tankov} who argue that usual copulas for L\'evy processes at fixed times are not natural objects to study.  

In this section, we first provide a better understanding of the relation of the two statements~\eqref{CI_process} and~\eqref{CI_fixed}.
We then establish a fundamental result stating that the conditional independence on the sample path level in~\eqref{CI_process} can be studied separately for the Brownian part $\mathbf W$ and the jump part $\mathbf J$ in the L\'evy--It\^o decomposition.

\subsection{Conditioning on fixed times versus sample paths}

A first natural question is whether the conditional independence on the level of the sample paths in~\eqref{CI_process} is equivalent to the same statement at a fixed time $t_0$ in~\eqref{CI_fixed}. 
We show that this is in fact not true, since the process conditional independence does in general not imply conditional independence at fixed times.

For the simple case in $d=3$ dimensions with $A=\{1\}$, $B = \{3\}$ and $C = \{2\}$, Figure~\ref{fig:backdoor} provides a better intuition for the difference between conditioning on $X_2(t_0)$ as opposed to the whole path $\mathbf X_2$.
The solid edges represent direct dependencies while the dashed edges are the ones closed by the conditioning, that is, edges where at least one of the connected nodes is deterministic. Only conditioning on $X_2(t_0)$ then leaves open a "backdoor" between $X_1(t_0)$ and $X_3(t_0)$: since both random variables are dependent on their past processes $(X_1(t): t<t_0)$ and $(X_3(t): t<t_0)$, respectively, and since the latter two processes are dependent, fixing $X_2(t_0)$ does not suffice to block all of the dependence. On the other hand, conditioning on the entire path blocks the information flow and thus breaks the dependence between $X_1(t_0)$ and $X_3(t_0)$.

To make this more concrete, we provide an example where $X_A(t_0)$ and $X_B(t_0)$ have some dependence that is determined by the full history of $\mathbf{X}_C$ on $[0,t_0]$ and not only by $X_C(t_0)$.

\begin{example}\label{ex:fixed_time}
Let $\mathbf{X}$ be a 3-dimensional compound Poisson process such that the jumps of $\mathbf{X}_2$ take the values $-1$ or $1$ (both with positive probability).
Moreover, $\mathbf{X}_1$ and $\mathbf{X}_3$ are independent, jumping precisely when $\Delta\mathbf{X}_2=1$ and $\Delta\mathbf{X}_2=-1$, respectively. The jumps of $\mathbf{X}_1$ and $\mathbf{X}_3$ are all of size $1$.
Then $\mathbf{X}_1\Perp\mathbf{X}_3\mid\mathbf{X}_2$ because $\mathbf X_1$ and $\mathbf X_3$ are deterministic given $\mathbf X_2$. For any $t_0>0$ the random variables $X_1(t_0)$ and $X_3(t_0)$ are not conditionally independent given $X_2(t_0)$. For example, conditionally on the event $\Set{X_2(t_0)=0}$, which has positive probability, we have $X_1(t_0)=X_3(t_0)$ and this common value is not deterministic.
\end{example}

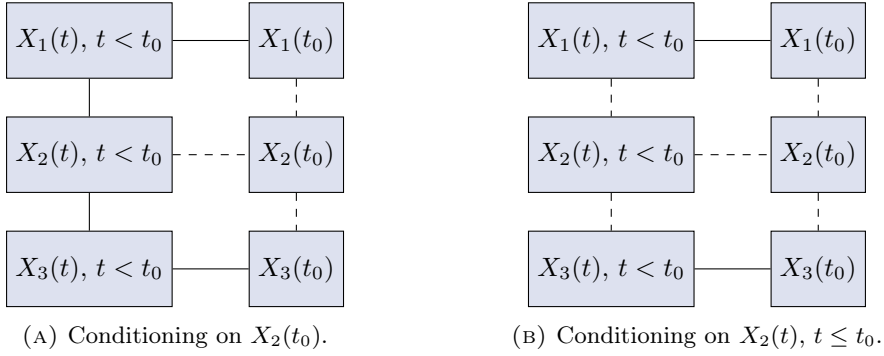
\begin{figure}[tb]
	\centering
	\begin{subfigure}[t]{0.45\textwidth}
	\centering
	\begin{tikzpicture}
	\node[fill=nodecolor,rectangle,draw,minimum size=1cm]   (1)                  {$X_1(t),\,t<t_0$};
	\node[fill=nodecolor,rectangle,draw,minimum size=1cm]	 (2)[right=of 1,]     {$X_1(t_0)$};
	\node[fill=nodecolor,rectangle,draw,minimum size=1cm]   (3)[below=.5cm of 1,]     {$X_2(t),\,t<t_0$};
	\node[fill=nodecolor,rectangle,draw,minimum size=1cm]   (4)[below=.5cm of 2,]     {$X_2(t_0)$};
	\node[fill=nodecolor,rectangle,draw,minimum size=1cm]   (5)[below=.5cm of 3,]     {$X_3(t),\,t<t_0$};
	\node[fill=nodecolor,rectangle,draw,minimum size=1cm]   (6)[below=.5cm of 4,]     {$X_3(t_0)$};

	\path
	(1) edge		 (2)
	(3) edge[dashed] (4)
	(5) edge		 (6)
	(1) edge		 (3)
	(3) edge		 (5)
	(2) edge[dashed] (4)
	(4) edge[dashed] (6)

	;
	\end{tikzpicture}
	\caption{Conditioning on $X_2(t_0)$.}
	\end{subfigure}
	\begin{subfigure}[t]{0.45\textwidth}
	\centering
	\begin{tikzpicture}
	\node[fill=nodecolor,rectangle,draw,minimum size=1cm]   (1)                  {$X_1(t),\,t<t_0$};
	\node[fill=nodecolor,rectangle,draw,minimum size=1cm]	 (2)[right=of 1,]     {$X_1(t_0)$};
	\node[fill=nodecolor,rectangle,draw,minimum size=1cm]   (3)[below=.5cm of 1,]     {$X_2(t),\,t<t_0$};
	\node[fill=nodecolor,rectangle,draw,minimum size=1cm]   (4)[below=.5cm of 2,]     {$X_2(t_0)$};
	\node[fill=nodecolor,rectangle,draw,minimum size=1cm]   (5)[below=.5cm of 3,]     {$X_3(t),\,t<t_0$};
	\node[fill=nodecolor,rectangle,draw,minimum size=1cm]   (6)[below=.5cm of 4,]     {$X_3(t_0)$};

	\path
	(1) edge		 (2)
	(3) edge[dashed] (4)
	(5) edge		 (6)
	(1) edge[dashed] (3)
	(3) edge[dashed] (5)
	(2) edge[dashed] (4)
	(4) edge[dashed] (6)
	;
	\end{tikzpicture}
	\caption{Conditioning on $X_2(t),\,t\leq t_0$.}
	\end{subfigure}
\caption{Illustration of the difference between conditioning on $X_2(t_0)$ and conditioning on $X_2(t),\,t\leq t_0$. To disconnect $X_1(t_0)$ and $X_3(t_0)$ it is not enough to condition on the present value $X_2(t_0)$.}
\label{fig:backdoor}
\end{figure}

Concerning the other direction, we can indeed show that conditional independence at all fixed times is strictly stronger than the corresponding statement on the sample paths level. 
\begin{proposition}\label{prop:fixed_times}
There is the implication
\begin{equation*}
X_A(t_0)\Perp X_B(t_0)\mid X_C(t_0)\text{ for all }t_0 > 0\qtq{$\Rightarrow$}\mathbf{X}_A\Perp\mathbf{X}_B\mid\mathbf{X}_C.
\end{equation*}
\end{proposition}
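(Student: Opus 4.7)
The plan is to reduce the sample-path conditional independence to a statement about finite-dimensional distributions, and then exploit the independent and stationary increments of $\mathbf X$ together with the hypothesis applied to each increment.

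First, since $\mathbf X$ has c\`adl\`ag paths, the $\sigma$-algebra $\sigma(\mathbf X_C)$ agrees with the $\sigma$-algebra generated by $(X_C(q))_{q\in\Q_+}$, and likewise for $A$ and $B$. By the standard monotone-class / finite-dimensional characterization of conditional independence on Polish spaces, it then suffices to prove that for every $n\in\N$ and every choice of times $0<t_1<\dotsm<t_n$,
\begin{equation*}
(X_A(t_1),\dotsc,X_A(t_n))\Perp(X_B(t_1),\dotsc,X_B(t_n))\mid (X_C(t_1),\dotsc,X_C(t_n)).
\end{equation*}

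Second, fix such times, set $t_0=0$, and define the increments $Y^{(k)} = X(t_k)-X(t_{k-1})$ for $k=1,\dotsc,n$. By properties (ii) and (iii) in the definition of a L\'evy process, the vectors $Y^{(1)},\dotsc,Y^{(n)}$ are independent with $Y^{(k)}\deq X(t_k-t_{k-1})$. Applying the hypothesis at time $t_0=t_k-t_{k-1}$ yields $Y^{(k)}_A\Perp Y^{(k)}_B\mid Y^{(k)}_C$ for each $k$.

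Third, I lift the within-block conditional independence to a joint statement. For bounded measurable $f_k,g_k$ one computes, using independence across $k$ in the first and third equalities and the within-block conditional independence in the second,
\begin{align*}
\Mean*{\prod_{k=1}^n f_k(Y^{(k)}_A)\,g_k(Y^{(k)}_B)\given (Y^{(j)}_C)_{j=1}^n}
&=\prod_{k=1}^n \Mean*{f_k(Y^{(k)}_A)\,g_k(Y^{(k)}_B)\given Y^{(k)}_C}\\
&=\prod_{k=1}^n \Mean*{f_k(Y^{(k)}_A)\given Y^{(k)}_C}\,\Mean*{g_k(Y^{(k)}_B)\given Y^{(k)}_C}\\
&=\Mean*{\prod_{k=1}^n f_k(Y^{(k)}_A)\given (Y^{(j)}_C)_{j=1}^n}\Mean*{\prod_{k=1}^n g_k(Y^{(k)}_B)\given (Y^{(j)}_C)_{j=1}^n}.
\end{align*}
Hence $(Y^{(k)}_A)_{k=1}^n\Perp (Y^{(k)}_B)_{k=1}^n \mid (Y^{(k)}_C)_{k=1}^n$. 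Since $(X(t_k))_{k=1}^n$ and $(Y^{(k)})_{k=1}^n$ are measurable functions of each other coordinate-wise in $A$, $B$ and $C$, this is exactly the desired finite-dimensional conditional independence, and the proof is complete.

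The only step that is not immediate is the block-to-joint lifting in the third paragraph; it is a short computation but is the substantive part of the argument. The first (reduction via c\`adl\`ag paths and finite-dimensional distributions) and the second (translating independence and stationarity of increments into the hypothesis applied blockwise) are standard once the right viewpoint is adopted.
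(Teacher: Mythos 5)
Your proof is correct and follows essentially the same route as the paper's: discretize time, use the independent and stationary increments to apply the fixed-time hypothesis to each increment, lift the per-increment conditional independence across the independent blocks, and recover the path-level statement by letting the grid of conditioning times become dense (your explicit computation of the lifting step is in fact more detailed than the paper's one-line appeal to it). The only point treated tersely in both write-ups is the passage from conditioning on finitely many times to conditioning on the whole path $\mathbf{X}_C$, which requires L\'evy's upward martingale convergence applied to the increasing conditioning $\sigma$-algebras together with right-continuity of the sample paths; this is standard and does not constitute a gap.
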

The proof of this proposition in Appendix~\ref{proof:fixed_times} does not require that $X_A(t_0)\Perp X_B(t_0)\mid X_C(t_0)$ for all $t_0 > 0$. In fact, it is sufficient for this to hold for all $t_0\in(0,\epsilon)$ for some $\epsilon>0$.

It is worthwhile to consider the case where $\mathbf{X}$ is a stable L\'evy process introduced in Example~\ref{ex:stable}. Because of~\eqref{eq:stable}, the left-hand side in Proposition~\ref{prop:fixed_times} is equivalent to having the conditional independence $X_A(t_0)\Perp X_B(t_0)\mid X_C(t_0)$ for a single time point $t_0>0$. Whether this is also true without stability is an open question. Importantly, distributional properties of $X(t)$ may be time dependent: \citet[\S23]{sato99} lists some examples such as unimodality and absolute continuity. 

A particular case when equivalence holds in Proposition~\ref{prop:fixed_times} is the Gaussian case. 
\begin{proposition}\label{Brownian_CI}
	Let $\mathbf X = \mathbf W$ have only a Brownian part, then    
	\begin{equation*}
		\mathbf{W}_A\Perp\mathbf{W}_B\mid\mathbf{W}_C \qtq{$\Leftrightarrow$} W_A(t_0)\Perp W_B(t_0)\mid W_C(t_0)\text{ for all }t_0 > 0.
		\end{equation*}
	Moreover, for the implication from right to left it suffices that there exists one $t_0 > 0$ where the conditional independence holds.
\end{proposition}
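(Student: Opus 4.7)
The plan is to reduce both sides of the equivalence to the same linear-algebraic condition on the covariance matrix $\Sigma$, namely that the Schur complement block $\Sigma_{AB}-\Sigma_{AC}\Sigma_{CC}^{+}\Sigma_{CB}$ vanishes, where $\Sigma_{CC}^{+}$ denotes the Moore--Penrose pseudoinverse. Since a deterministic drift has no effect on conditional independence, I would first assume $\gamma=0$ without loss of generality.

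For the fixed-time side, $W(t_0)$ is Gaussian with covariance $t_0\Sigma$, and the standard characterization of conditional independence for Gaussian vectors identifies $W_A(t_0)\Perp W_B(t_0)\mid W_C(t_0)$ with vanishing of the Schur complement of $t_0\Sigma$ in the $AB$ block. Since this equals $t_0$ times the Schur complement of $\Sigma$ and $t_0>0$, the condition is independent of the choice of $t_0$. This already yields that conditional independence at a single $t_0$ is equivalent to conditional independence at every $t_0$, which is the strengthening asserted in the last sentence of the proposition. For the sample-path side, the key tool is the orthogonal decomposition
\begin{equation*}
\tilde W_i(t) \;=\; W_i(t)-\Sigma_{iC}\Sigma_{CC}^{+}W_C(t),\qquad i\in A\cup B.
\end{equation*}
A direct calculation using $\Cov{W_i(s)}{W_j(t)}=\Sigma_{ij}\min(s,t)$ shows that $\tilde{\mathbf W}_{A\cup B}$ is a (possibly degenerate) Brownian motion uncorrelated with $\mathbf W_C$ whose own covariance is precisely the Schur complement block $\Sigma_{A\cup B,A\cup B}-\Sigma_{A\cup B,C}\Sigma_{CC}^{+}\Sigma_{C,A\cup B}$. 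Joint Gaussianity promotes uncorrelatedness to independence, so $\tilde{\mathbf W}_{A\cup B}\Perp\mathbf W_C$. Since $\mathbf W_A$ and $\mathbf W_B$ are affine functions of $\mathbf W_C$ together with $\tilde{\mathbf W}_A$ and $\tilde{\mathbf W}_B$ respectively, $\mathbf W_A\Perp\mathbf W_B\mid\mathbf W_C$ is equivalent to $\tilde{\mathbf W}_A\Perp\tilde{\mathbf W}_B$, which for jointly Gaussian Brownian motions reduces to the $AB$ block of their cross-covariance being zero, i.e.\ exactly the Schur complement condition on $\Sigma$.

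With both sides identified with the same condition on $\Sigma$, the proposition, including the strengthened right-to-left direction from a single $t_0$, is immediate. The main technical hurdle is a potentially singular $\Sigma_{CC}$: one either works systematically with the Moore--Penrose pseudoinverse, checking that the orthogonal projection above is well defined and that the standard Gaussian conditional independence criterion carries through in the degenerate setting, or one first restricts attention to the column space of $\Sigma_{CC}$ to reduce to the invertible case. Alternatively, one could invoke Proposition~\ref{prop:fixed_times} to obtain the implication from all-$t_0$ conditional independence to sample-path conditional independence for free, and then apply the Schur complement argument only to upgrade a single $t_0$ to all $t_0$ and to obtain the converse; the direct reduction above treats both directions uniformly.
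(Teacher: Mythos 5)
Your proof is correct, but it takes a genuinely different route from the paper's. The paper obtains the right-to-left implication by invoking Proposition~\ref{prop:fixed_times} (the general L\'evy-process argument via discretized increments) and upgrades a single $t_0$ to all $t_0$ using the stability scaling $W(h)\deq h^{1/2}W(1)$; for left-to-right it reduces the conditioning from $\mathbf W_C$ to $(W_C(t):t\le 1)$ by independence of increments, splits this into $W_C(1)$ plus the Brownian bridge $B_C(t)=W_C(t)-tW_C(1)$, and drops the bridge from the conditioning set using $W(1)\Perp (B(t):t\in[0,1])$. You instead reduce \emph{both} sides to the single algebraic condition $\Sigma_{AB}-\Sigma_{AC}\Sigma_{CC}^{+}\Sigma_{CB}=0$: the fixed-time side by the standard Gaussian criterion (with the $t_0$-invariance of the rescaled Schur complement giving the single-$t_0$ strengthening), and the path side via the innovation process $\tilde W_i(t)=W_i(t)-\Sigma_{iC}\Sigma_{CC}^{+}W_C(t)$, whose independence from $\mathbf W_C$ follows from uncorrelatedness plus joint Gaussianity (the identity $\Sigma_{iC}\Sigma_{CC}^{+}\Sigma_{CC}=\Sigma_{iC}$, valid for any positive semidefinite $\Sigma$, is what makes the pseudoinverse version go through). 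Your argument is self-contained, handles degenerate $\Sigma_{CC}$ explicitly, and delivers as a by-product the explicit precision-matrix-type criterion that the paper only states separately after Proposition~\ref{prop:add_gaussian}; the paper's argument is shorter given Proposition~\ref{prop:fixed_times} and avoids linear algebra entirely. The one step you compress --- the equivalence of $\mathbf W_A\Perp\mathbf W_B\mid\mathbf W_C$ with $\tilde{\mathbf W}_A\Perp\tilde{\mathbf W}_B$ --- does hold, but it uses the \emph{joint} independence $(\tilde{\mathbf W}_A,\tilde{\mathbf W}_B)\Perp\mathbf W_C$ (so that conditioning on $\mathbf W_C$ leaves the joint law of the innovations unchanged), not merely the marginal independences; this is worth spelling out.
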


In applications, we might be interested in the L\'evy process $\mathbf X$ up to some finite time point. Conditional independence in~\eqref{CI_process} uses conditioning on the whole sample path, including the future of the process, which is typically not yet observed.  
The concept of killing is central in the theory of L\'evy processes as it allows us to consider $\mathbf{X}$ only observed until some time point $T>0$.
This time point is often taken to be a stopping time, but here we consider a deterministic, fixed time $T>0$ to define the {killed process} $\mathbf{X}^T$ by
\begin{equation*}
X^T(t)=\begin{cases}
X(t) & \text{for }t<T, \\
\dagger & \text{for }t\geq T,
\end{cases}
\end{equation*}
where $\dagger$ denotes a graveyard state. We then have the equivalence of the conditional independence on the level of the entire path and the killed paths. 
\begin{proposition}\label{prop:killed_process}
There is the equivalence:
\begin{equation*}
\mathbf{X}_A\Perp\mathbf{X}_B\mid\mathbf{X}_C\qtq{$\Leftrightarrow$}\mathbf{X}_A^T\Perp\mathbf{X}_B^T\mid\mathbf{X}_C^T\text{ for all }T>0.
\end{equation*}
\end{proposition}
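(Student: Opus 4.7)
The plan is to prove the two implications separately, exploiting the independent-increments structure of $\mathbf X$ together with the fact that $\sigma(\mathbf X_i^T)\uparrow\sigma(\mathbf X_i)$ as $T\to\infty$ for every $i\in V$.

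For the forward direction, fix $T>0$ and write $\mathbf Y_i:=(X_i(T+t)-X_i(T))_{t\geq 0}$ for $i\in V$. By independent increments, the triple $(\mathbf Y_A,\mathbf Y_B,\mathbf Y_C)$ is independent of $(\mathbf X_A^T,\mathbf X_B^T,\mathbf X_C^T)$. Since a L\'evy process has no fixed discontinuity, stochastic continuity yields $X_C(T)=\lim_{s\uparrow T}X_C(s)$ almost surely, so $\sigma(\mathbf X_C)=\sigma(\mathbf X_C^T)\vee\sigma(\mathbf Y_C)$ up to null sets. Because each $\mathbf X_i^T$ is a measurable function of $\mathbf X_i$, the hypothesis $\mathbf X_A\Perp\mathbf X_B\mid\mathbf X_C$ specializes to $\mathbf X_A^T\Perp\mathbf X_B^T\mid(\mathbf X_C^T,\mathbf Y_C)$. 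A direct conditional-expectation computation shows that $U\Perp V\mid(W,Z)$ together with $Z\Perp(U,V,W)$ imply $U\Perp V\mid W$; applying this with $U=\mathbf X_A^T$, $V=\mathbf X_B^T$, $W=\mathbf X_C^T$, $Z=\mathbf Y_C$ delivers $\mathbf X_A^T\Perp\mathbf X_B^T\mid\mathbf X_C^T$.

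For the reverse direction, assume $\mathbf X_A^T\Perp\mathbf X_B^T\mid\mathbf X_C^T$ for every $T>0$ and let $f,g$ be bounded measurable functionals on the path spaces of $\mathbf X_A$ and $\mathbf X_B$. Set $f_T:=\mathbb E[f(\mathbf X_A)\mid\mathbf X_A^T]$ and $g_T:=\mathbb E[g(\mathbf X_B)\mid\mathbf X_B^T]$; these are $\sigma(\mathbf X_A^T)$- and $\sigma(\mathbf X_B^T)$-measurable, so the hypothesis yields
\[
\mathbb E\bigl[f_T g_T\bigm|\mathbf X_C^T\bigr]=\mathbb E\bigl[f_T\bigm|\mathbf X_C^T\bigr]\,\mathbb E\bigl[g_T\bigm|\mathbf X_C^T\bigr].
\]
By L\'evy's upward theorem, $f_T\to f(\mathbf X_A)$ and $g_T\to g(\mathbf X_B)$ in $L^1$, and along $\sigma(\mathbf X_C^T)\uparrow\sigma(\mathbf X_C)$ the conditional expectations converge to those given $\mathbf X_C$. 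A triangle-inequality argument then passes both sides to the $L^1$-limit as $T\to\infty$, producing $\mathbb E[f(\mathbf X_A)g(\mathbf X_B)\mid\mathbf X_C]=\mathbb E[f(\mathbf X_A)\mid\mathbf X_C]\,\mathbb E[g(\mathbf X_B)\mid\mathbf X_C]$; varying $f$ and $g$ yields $\mathbf X_A\Perp\mathbf X_B\mid\mathbf X_C$.

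The main subtlety I expect is in the forward direction: one must verify cleanly that $\sigma(\mathbf X_C)$ is generated, modulo null sets, by $\sigma(\mathbf X_C^T)$ and $\sigma(\mathbf Y_C)$, for which the absence of fixed discontinuities of the L\'evy process at the deterministic time $T$ is essential, and then apply the graphoid step using the independence of $\mathbf Y_C$ from the \emph{full} triple $(\mathbf X_A^T,\mathbf X_B^T,\mathbf X_C^T)$, not merely from $\mathbf X_C^T$.
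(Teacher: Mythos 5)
Your proof is correct. The forward direction is essentially identical to the paper's: both rely on the decomposition $\sigma(\mathbf X_C)=\sigma(\mathbf X_C^T)\vee\sigma\bigl((X_C(T+t)-X_C(T))_{t\geq0}\bigr)$ modulo null sets (justified by a.s.\ continuity of the L\'evy process at the deterministic time $T$), followed by dropping the future increment process from the conditioning set via its independence from everything up to time $T$. The reverse direction, however, takes a genuinely different route. The paper works with finite-dimensional marginals: for fixed times $0\leq t_1<\dotsb<t_k$ it chooses a single $T>t_k$, observes that $X_A(t_i)=X_A^T(t_i)$, obtains $\Set{X_A(t_i)}\Perp\Set{X_B(t_i)}\mid\mathbf X_C^T$, and then \emph{enlarges} the conditioning set to all of $\mathbf X_C$ using independence of the post-$T$ increments; conditional independence of the full paths then follows from a standard generating-class argument. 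You instead take arbitrary bounded path functionals, project them onto $\sigma(\mathbf X_A^T)$ and $\sigma(\mathbf X_B^T)$, and pass to the limit $T\to\infty$ via L\'evy's upward theorem together with an $L^1$ triangle-inequality estimate (which goes through since all quantities are uniformly bounded and $\sigma(\mathbf X_C^T)\uparrow\sigma(\mathbf X_C)$). The paper's version is slicker in that no limiting procedure is needed beyond the implicit $\pi$-system step, while yours handles general functionals directly and avoids the reduction to finite-dimensional distributions; both are valid, and your identification of the key subtlety (the absence of fixed discontinuities at $T$, and that the future increments must be independent of the full killed triple rather than just of $\mathbf X_C^T$) matches exactly what the paper's argument depends on.
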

Extending Proposition~\ref{prop:killed_process} to the case where $T$ can be any positive stopping time is not possible. The interesting implication is the one from left to right, and in general it does not hold. Indeed, suppose that  $\abs{A}=\abs{B}=1$ and $\mathbf{X}_C$ is independent of $\mathbf{X}_{A\cup B}$.
For $T=\inf\Set{t\geq1 : \abs{X_A(t)-X_B(t)}\leq\epsilon}$ with some $\epsilon>0$ we  readily see that $\mathbf{X}_A^T$ and $\mathbf{X}_B^T$ are not conditionally independent given $\mathbf{X}_C^T$. On the other hand, the implication from left to right in Proposition~\ref{prop:killed_process} does hold for all $\sigma(\mathbf{X}_C)$-measurable random variables $T$; in fact, the original proof applies without changes.

\subsection{The L\'evy--It\^o decomposition}

As mentioned in the preliminaries, thanks to the L\'evy--Itô decomposition the L\'evy process admits the representation $\mathbf{X}=\mathbf{W}+\mathbf{J}$ with Brownian part $\mathbf{W}$ and jump part $\mathbf{J}$.
A crucial question is how conditional independence for the sample paths of $\mathbf X$ relates to conditional independence of the sample path of the Brownian and jumps components. Surprisingly, it turns out that conditional independence can be studied separately for these two parts.
\begin{proposition}\label{prop:add_gaussian}
There is the equivalence
\begin{equation*}
\mathbf{X}_A\Perp\mathbf{X}_B\mid\mathbf{X}_C\qtq{$\Leftrightarrow$} \mathbf{W}_A\Perp\mathbf{W}_B\mid\mathbf{W}_C\qtq*{and}\mathbf{J}_A\Perp\mathbf{J}_B\mid\mathbf{J}_C.
\end{equation*}
\end{proposition}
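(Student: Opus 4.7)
The plan hinges on one crucial observation: the one-dimensional L\'evy--It\^o decomposition applied to each marginal $\mathbf X_i$ realises $\mathbf W_i$ and $\mathbf J_i$ as measurable functionals of $\mathbf X_i$ alone. This follows from the coordinate-wise almost-sure limit construction alluded to before the proposition, together with the uniqueness of the continuous-plus-pure-jump splitting of a one-dimensional L\'evy process (any other decomposition into a continuous Gaussian part and an independent pure-jump part must coincide with the one obtained from the joint L\'evy--It\^o triplet). Consequently, for every $I\subseteq V$,
\[
\sigma(\mathbf X_I)=\sigma(\mathbf W_I,\mathbf J_I),
\]
and this identity is what ties the path-level conditional independence of $\mathbf X$ to that of its two components.

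For the direction $(\Leftarrow)$, assume both $\mathbf W_A\Perp\mathbf W_B\mid\mathbf W_C$ and $\mathbf J_A\Perp\mathbf J_B\mid\mathbf J_C$. Since $(\mathbf W_A,\mathbf W_B,\mathbf W_C)\Perp(\mathbf J_A,\mathbf J_B,\mathbf J_C)$ by the L\'evy--It\^o decomposition, the regular conditional distribution of $(\mathbf W_A,\mathbf W_B,\mathbf J_A,\mathbf J_B)$ given $(\mathbf W_C,\mathbf J_C)$ factors as a product of the conditional distribution of $(\mathbf W_A,\mathbf W_B)$ given $\mathbf W_C$ (which in turn factors by the Brownian CI) and the conditional distribution of $(\mathbf J_A,\mathbf J_B)$ given $\mathbf J_C$ (which factors by the jump CI). Combining the two factorisations gives $(\mathbf W_A,\mathbf J_A)\Perp(\mathbf W_B,\mathbf J_B)\mid(\mathbf W_C,\mathbf J_C)$, and applying the measurable maps $\mathbf X_A=\mathbf W_A+\mathbf J_A$, $\mathbf X_B=\mathbf W_B+\mathbf J_B$ yields $\mathbf X_A\Perp\mathbf X_B\mid(\mathbf W_C,\mathbf J_C)$. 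The identity $\sigma(\mathbf W_C,\mathbf J_C)=\sigma(\mathbf X_C)$ then delivers the desired $\mathbf X_A\Perp\mathbf X_B\mid\mathbf X_C$.

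For the direction $(\Rightarrow)$, the same identity allows us to rewrite $\mathbf X_A\Perp\mathbf X_B\mid\mathbf X_C$ as $(\mathbf W_A,\mathbf J_A)\Perp(\mathbf W_B,\mathbf J_B)\mid(\mathbf W_C,\mathbf J_C)$. Decomposition, a graphoid property valid for arbitrary Polish-valued random variables, gives $\mathbf W_A\Perp\mathbf W_B\mid(\mathbf W_C,\mathbf J_C)$. From $\mathbf W\Perp\mathbf J$ we also have $\mathbf W_A\Perp\mathbf J_C\mid\mathbf W_C$ trivially. Applying the contraction axiom to these two statements yields $\mathbf W_A\Perp(\mathbf W_B,\mathbf J_C)\mid\mathbf W_C$, and a final decomposition step gives $\mathbf W_A\Perp\mathbf W_B\mid\mathbf W_C$. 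The symmetric argument with the roles of $\mathbf W$ and $\mathbf J$ exchanged gives $\mathbf J_A\Perp\mathbf J_B\mid\mathbf J_C$. The main obstacle in the whole argument is the coordinate-wise measurability statement $\sigma(\mathbf X_I)=\sigma(\mathbf W_I,\mathbf J_I)$: the naive joint construction of $\mathbf W^{(n)},\mathbf J^{(n)}$ uses a jump-size cutoff depending on the entire vector, so one must verify, either via the uniqueness of the continuous/pure-jump splitting of the marginal $\mathbf X_i$ or via a coordinate-wise construction as in the appendix, that $\mathbf W_i$ and $\mathbf J_i$ genuinely depend on $\mathbf X_i$ alone. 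Once this is established, the remainder is a clean graphoid manipulation combined with the independence of the Brownian and jump parts.
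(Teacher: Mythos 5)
Your proposal is correct and follows essentially the same route as the paper: both arguments rest on the fact that $\mathbf W_I$ and $\mathbf J_I$ are (completed-)$\sigma(\mathbf X_I)$-measurable via the coordinate-wise almost-sure limit construction, so that conditioning on $\mathbf X_C$ is the same as conditioning on $(\mathbf W_C,\mathbf J_C)$, combined with the independence of $\mathbf W$ and $\mathbf J$. The only differences are presentational — the paper writes the backward direction as an explicit factorization of conditional expectations and gets the forward direction from a lemma on preservation of conditional independence under almost-sure limits, whereas you phrase the same steps via regular conditional distributions and the decomposition/contraction graphoid axioms.
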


A few comments are in order. While the Brownian part $\mathbf W$ and the jump part $\mathbf J$ in the L\'evy--Itô decomposition are always independent, this does not trivially imply the above result. In fact, for arbitrary independent random vectors $Y$ and $Z$ the implication
\begin{equation}\label{equiv_RV}
Y_A\Perp Y_B\mid Y_C\qtq*{and} Z_A\Perp Z_B\mid Z_C\qtq{$\Rightarrow$} Y_A+Z_A\Perp Y_B+Z_B\mid Y_C+Z_C
\end{equation}
does not hold in general. The issue is that conditioning on $Y_C+Z_C$ is not guaranteed to provide enough information about the individual terms $Y_C$ and $Z_C$. Also the other direction in~\eqref{equiv_RV} does not hold in general.
The proof of Proposition~\ref{prop:add_gaussian} in Appendix~\ref{proof:add_gaussian} makes use of the fact that $\mathbf{J}_C$ and $\mathbf{W}_C$ may be constructed from $\mathbf{X}_C$; see also Appendix~\ref{sec:decomp_cond} for details on this decomposition.

Conditional independence for the Brownian part is well understood. Suppose that the covariance matrix $\Sigma$ in the L\'evy triplet is invertible.
In light of Proposition~\ref{Brownian_CI} and classical results on Gaussian distributions \citep[e.g.,][]{lauritzen96}, we have for $i\neq j$ that
\[\mathbf{W}_i\Perp\mathbf{W}_j\mid\mathbf{W}_{V\setminus\{i,j\}} \qtq{$\Leftrightarrow$} (\Sigma^{-1})_{ij}=0, \]
and similarly for more general index sets.
Conditional independence for the jump part $\mathbf J$ has not been studied yet. The next section provides natural and easy-to-check conditions in terms of the L\'evy measure $\Lambda$.

\section{Conditional independence for the jump process}\label{sec:jump_part}

Now and throughout the rest of the paper, we consider a L\'evy process $\mathbf{X}$ with characteristic triplet $(\gamma,0,\Lambda)$, meaning that $\mathbf{X}$ has no Brownian component. 
If the L\'evy measure has finite total mass, $\Lambda(\mathbb R^d) < \infty$, or equivalently, if all marginal processes are compound Poisson, then studying conditional independence for the jump process $\mathbf X$ is fairly straightforward. In this case, one can consider $Y_A\Perp Y_B\mid Y_C$ where $Y$ is a random vector with jump distribution $\Lambda(\cdot)/\Lambda(\R^d)$.
In the more important case where $\Lambda$ has infinite total mass, this is obviously no longer possible. A different approach is required and we rely on the generalized notion of conditional independence for infinite measure introduced by \citet{eng_iva_kir}, which is perfectly suited for L\'evy measures. It is inspired by the work of \citet{engelke-hitz} who studied a new definition of conditional independence in the framework of multivariate extreme value theory.

In this section we first define $\Lambda$ conditional independence. A main contribution of the present paper is Theorem~\ref{thm:cond_indep_process_level} below, which links this notion to  conditional independence of the sample paths of $\mathbf{X}$. Moreover, we show that this conditional independence only depends on the standardized L\'evy measure but not the marginal tail functions, highlighting how it arises naturally as a dependence property. 

\subsection{Conditional independence for the L\'evy measure}

Recall the notation $V=\Set{1,\dotsc,d}$ and write
\begin{equation*}
\mathcal{R}(\Lambda)=\Set{R=\times_{v\in V}R_v\given R_v\in\B(\R),0\notin\bar{R},\Lambda(R)>0},
\end{equation*}
for the class of product sets that have positive mass and are bounded away from the origin; see Figure~\ref{fig:rectangles} for examples. Each set $R\in\mathcal{R}(\Lambda)$ has positive and finite mass, and therefore it induces a probability measure $\p_R=\Lambda(\cdot\cap R)/\Lambda(R)$. The reason for restricting to product sets is that it is natural in the context of independence. Indeed, two random vectors can only be independent if the joint support is a product set. We will often consider a random vector $Y$ with distribution $\p_R$.

\begin{figure}[tb]
\begin{tikzpicture}
\begin{axis}[name=plot1,axis lines=center, width=6cm,height=4.8cm, ymin=-2,ymax=6,xmin=-2,xmax=10,ticks=none, title=(A),unit vector ratio*=1 1 1]
\addplot+[name path=A,ForestGreen,domain=1:3,mark=none,solid] {1};
\addplot+[name path=B,ForestGreen,domain=1:3,mark=none,solid] {5};
\addplot[ForestGreen!50] fill between[of=A and B];
\draw[ForestGreen] (axis cs:1,1) to (axis cs:1,5);
\draw[ForestGreen] (axis cs:3,1) to (axis cs:3,5);

\addplot+[name path=C,ForestGreen,domain=4:8,mark=none,solid] {1};
\addplot+[name path=D,ForestGreen,domain=4:8,mark=none,solid] {5};
\addplot[ForestGreen!50] fill between[of=C and D];
\draw[ForestGreen] (axis cs:4,1) to (axis cs:4,5);
\draw[ForestGreen] (axis cs:8,1) to (axis cs:8,5);
\end{axis}

\begin{axis}[name=plot2,at={($ (.5cm,0cm) + (plot1.south east) $)},axis lines=center, width=6cm,height=4.8cm, ymin=-2,ymax=6,xmin=-2,xmax=10,ticks=none,title=(B),unit vector ratio*=1 1 1]
\addplot+[name path=A,white,domain=1:3,mark=none] {5};
\addplot+[name path=B,white,domain=1:3,mark=none] {3};
\addplot[BrickRed!50] fill between[of=A and B];
\addplot+[name path=C,white,domain=3:8,mark=none] {5};
\addplot+[name path=D,white,domain=3:8,mark=none] {1};
\addplot[BrickRed!50] fill between[of=C and D];
\draw[BrickRed!50] (axis cs:3,3) to (axis cs:3,5);
\addplot+[BrickRed,domain=1:8,mark=none,solid] {5};
\addplot+[BrickRed,domain=1:3,mark=none,solid] {3};
\addplot+[BrickRed,domain=3:8,mark=none,solid] {1};
\draw[BrickRed] (axis cs:1,3) to (axis cs:1,5);
\draw[BrickRed] (axis cs:3,1) to (axis cs:3,3);
\draw[BrickRed] (axis cs:8,1) to (axis cs:8,5);
\end{axis}

\begin{axis}[at={($ (.5cm,0cm) + (plot2.south east) $)},axis lines=center, width=6cm,height=4.8cm, ymin=-2,ymax=6,xmin=-2,xmax=10,ticks=none,title=(C),unit vector ratio*=1 1 1]
\addplot+[name path=A,white,domain=-1:5,mark=none] {-1};
\addplot+[name path=B,white,domain=-1:5,mark=none] {5};
\addplot[BrickRed!50] fill between[of=A and B];
\draw[black] (axis cs:0,-1) to (axis cs:0,5);
\draw[black] (axis cs:-1,0) to (axis cs:5,0);
\addplot+[name path=A,BrickRed,domain=-1:5,mark=none] {-1};
\addplot+[name path=B,BrickRed,domain=-1:5,mark=none] {5};
\draw[BrickRed] (axis cs:-1,-1) to (axis cs:-1,5);
\draw[BrickRed] (axis cs:5,-1) to (axis cs:5,5);
\end{axis}
\end{tikzpicture}
\caption{Examples of sets in $\R^2$. The sets in (A) and (B) are bounded away from~$0$ but only the former is a product set. (C) shows a product set containing~$0$. Hence, only the set in (A) is in $\mathcal{R}(\Lambda)$, provided that it has positive $\Lambda$ measure.}
\label{fig:rectangles}
\end{figure}
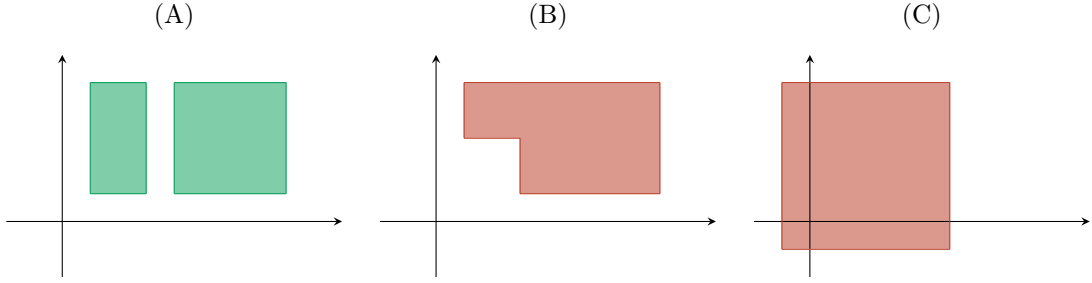

For a non-empty subset $D\subseteq V$ we may consider two measures on $\E^D=\R^D\setminus\Set{0_D}$ defined by
\begin{equation}\label{marginal_measures}
\Lambda_D=\Lambda(\Set{x_D\in\cdot}),\qquad \Lambda_D^0=\Lambda(\Set{x_D\in\cdot,x_{V\setminus D}=0_{V\setminus D}}).
\end{equation}
For $v\in V$ we shall use the simplified notation $\Lambda_v=\Lambda_{\Set{v}}$.
The distributions $\p_R$ are now used for defining conditional independence for the measure $\Lambda$.

\begin{definition}\label{def:cond_indep_measure}
  For disjoint sets $A, B, C\subseteq V$ that form a partition of $V$, we say that $\Lambda$ admits conditional independence of $A$ and $B$ given $C$, denoted by $\indepABC,$
  if we have the classical conditional independence
  \begin{align}\label{def_part}
  Y_A \Perp Y_B \mid Y_C \qquad \text{for } \quad Y\sim \mathbb P_{R} \quad \text{ for all } \quad R\in \mathcal R (\Lambda).
  \end{align}
  This is trivially true for $A$ or $B$ being empty, and for $C=\emptyset$ we say that $\Lambda$ admits independence of $A$ and $B$, and write $\indepAB.$
  If the sets $A, B$ and $C$ are not a partition of $V$, then the above definition remains the same with
  the test class in~\eqref{def_part} replaced by $ \mathcal R(\Lambda_{A \cup B\cup C})$.
\end{definition}

We recall some fundamental properties of this conditional independence notion. For statistical purposes, it is important to note that if $\Lambda$ possesses a Lebesgue density $\lambda$, then the conditional independence in~\eqref{def_part} for a partition $A,B,C\subseteq V$ is equivalent to the factorization
\begin{align}\label{dens_fact}
	\lambda(y) = \lambda_{A\cup C}(y_{A\cup C}) \lambda_{B\cup C}(y_{B\cup C}) / \lambda_C(y_C), \quad y\in \mathbb R^d \setminus \{0\},
\end{align}	
where $\lambda_I$ is the density of the marginal measure $\Lambda_I$ for some subset $I \subset V$; see \citet[][Theorem 4.6]{eng_iva_kir} for a slightly more general statement. Moreover, by \citet[][Lemma 3.3]{eng_iva_kir}, conditional independence of $\Lambda$ implies independence under the measure $\Lambda^0_{A\cup_B}$, that is,
\begin{equation}\label{supp_impl}
A\perp B\mid C\,[\Lambda]\qtq{$\Rightarrow$}A\perp B\,[\Lambda_{A\cup B}^0].
\end{equation}

\subsection{Distributional interpretation of the conditional independence}

For two subsets $A,B \subset V$, a well-known fact for a L\'evy process $\mathbf X$ with characteristic triplet $(\gamma, 0, \Lambda)$ is that the stochastic processes $\mathbf X_A$ and $\mathbf X_B$ are independent if and only if they have no simultaneous jumps \citep[][Proposition 5.3]{tankov2003financial}, that is,
\begin{equation}\label{levy_indep}
	\mathbf{X}_A\Perp\mathbf{X}_B\qtq{$\Leftrightarrow$}  \Lambda({x_A\neq0_A,x_B\neq0_B})=0.
\end{equation}
The conditional independence in Definition~\ref{def:cond_indep_measure} concerns the L\'evy measure $\Lambda$ and hence the jumps of the process $\mathbf X$. In this section we show how this translates to probabilistic statements about the process $\mathbf{X}$ itself, both for the example of independence above and more generally for conditional independence. We assume the explosiveness condition
\begin{equation}\label{eq:explosion_0}
\tag*{[A1]}
\Lambda({x_i\neq0})\in\Set{0,\infty}\For*{for all $i\in V$}.
\end{equation}
This assumption states that every one-dimensional L\'evy process $\mathbf X_i$ has either zero or infinite jump activity. This excludes the case of compound Poisson processes, for which a simpler conditional independence is sufficient. We begin by discussing the simpler case of independence. Under the explosiveness assumption above, \citet[Proposition~5.1]{eng_iva_kir} show that 
\begin{align}\label{Lambda_indep}
	A\perp B\,[\Lambda] \quad \Leftrightarrow \quad \Lambda({x_A\neq0_A,x_B\neq0_B})=0.
\end{align}
Together with~\eqref{levy_indep} we immediately have the following result.

\begin{lemma}\label{lem:independence_implication}
Let $A,B\subseteq V$ be a partition and assume \ref{eq:explosion_0}. Then there is the equivalence
\begin{equation*}
\mathbf{X}_A\Perp\mathbf{X}_B\qtq{$\Leftrightarrow$} A\perp B\,[\Lambda].
\end{equation*}
The implication from left to right holds even if \ref{eq:explosion_0} is false.
\end{lemma}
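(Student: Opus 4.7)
The plan is to chain the two displayed equivalences that immediately precede the lemma. Equation~\eqref{levy_indep}, which holds unconditionally, gives
\[\mathbf{X}_A \Perp \mathbf{X}_B \;\Leftrightarrow\; \Lambda(\Set{x_A \neq 0_A,\ x_B \neq 0_B}) = 0,\]
and equation~\eqref{Lambda_indep}, valid under~\ref{eq:explosion_0}, gives
\[A \perp B\,[\Lambda] \;\Leftrightarrow\; \Lambda(\Set{x_A \neq 0_A,\ x_B \neq 0_B}) = 0.\]
Composing them immediately yields the main equivalence of the lemma.

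For the final sentence, the task is to verify that the implication from $\mathbf{X}_A \Perp \mathbf{X}_B$ to $A \perp B\,[\Lambda]$ survives without~\ref{eq:explosion_0}. Since \eqref{levy_indep} is unconditional, this reduces to establishing
\[\Lambda(\Set{x_A \neq 0_A,\ x_B \neq 0_B}) = 0 \;\Rightarrow\; A \perp B\,[\Lambda]\]
in full generality. I would argue by a short rectangle case analysis. Fix $R = R_A \times R_B \in \mathcal{R}(\Lambda)$ and let $Y \sim \mathbb{P}_R$. Because $R$ is bounded away from $0$, we cannot have both $0_A \in \overline{R_A}$ and $0_B \in \overline{R_B}$; by symmetry, assume $0_A \notin \overline{R_A}$. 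Then $x_A \neq 0_A$ is automatic on $R$, and the hypothesis forces all $\Lambda$-mass on $R$ to lie on $R \cap \Set{x_B = 0_B}$. As $\Lambda(R) > 0$, this slice is nonempty, so $0_B \in R_B$ and $Y_B = 0_B$ almost surely under $\mathbb{P}_R$. With one marginal deterministic, $Y_A \Perp Y_B$ is trivial, which is exactly $A \perp B\,[\Lambda]$.

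I do not foresee a real obstacle: once the two cited equivalences are in hand, the lemma reduces to bookkeeping about which direction relies on~\ref{eq:explosion_0}, and the short rectangle argument above confirms that the left-to-right direction always survives.
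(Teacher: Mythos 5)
Your proof is correct and follows essentially the same route as the paper, which obtains the lemma by chaining the unconditional equivalence \eqref{levy_indep} with the equivalence \eqref{Lambda_indep} valid under \ref{eq:explosion_0}. Your additional rectangle argument for the final sentence --- showing that $\Lambda(\Set{x_A\neq0_A,\,x_B\neq0_B})=0$ forces one block of $Y\sim\mathbb P_R$ to be deterministic on every product set $R\in\mathcal R(\Lambda)$, hence $A\perp B\,[\Lambda]$ without \ref{eq:explosion_0} --- is a sound, self-contained justification of a claim the paper leaves implicit in its citation of \citet[Proposition~5.1]{eng_iva_kir}.
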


We now turn to the more difficult question of conditional independence.
For this, we need a slightly stronger assumption than \ref{eq:explosion_0}, namely
\begin{equation}\label{eq:explosion_1}
\tag*{[A2]}
\Lambda_D^0({x_i\neq0})\in\Set{0,\infty},\For*{for all $D\subseteq V$ and $i\in D$,}
\end{equation}
where $\Lambda_D^0$ is defined in~\eqref{marginal_measures}. The importance of this assumption was already noted in \citet[Theorem 5.3]{eng_iva_kir}, who show that it is sufficient for the $\Lambda$ conditional independence to form a semi-graphoid \citep[][Chapter 3]{lauritzen96}. This fact underlines that this conditional independence is natural and also implies various useful properties of the corresponding graphical models.

Lemma~\ref{lem:independence_implication} is a special case of the more general Theorem~\ref{thm:cond_indep_process_level} below, which states that $\mathbf{X}_A\Perp\mathbf{X}_B\mid\mathbf{X}_C$ if and only if the jumps of $\mathbf{X}$ satisfy the corresponding conditional independence statement.

\begin{theorem}\label{thm:cond_indep_process_level}
Let $A,B,C\subseteq V$ be a partition. Under \ref{eq:explosion_1} there is the equivalence
\begin{equation*}
\mathbf{X}_A\Perp\mathbf{X}_B\mid\mathbf{X}_C\qtq{$\Leftrightarrow$} A\perp B\mid C\,[\Lambda].
\end{equation*}
The implication from left to right holds without \ref{eq:explosion_1}.
\end{theorem}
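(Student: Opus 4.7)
My strategy is to represent the pure-jump process $\mathbf X$ via its Poisson random measure (PRM) of jumps, decompose this PRM according to the \emph{support pattern} of each jump, and then translate the process-level conditional independence into a conditional independence statement about jump distributions, which can finally be matched against the $\Lambda$ conditional independence of Definition~\ref{def:cond_indep_measure}. Concretely, I would write $\mathbf X$ (modulo drift and compensated small jumps) as an integral against a PRM $N$ on $(0,\infty)\times(\mathbb R^d\setminus\{0\})$ with intensity $dt\otimes\Lambda$. For each non-empty $D\subseteq V$ let $N^D$ denote the restriction of $N$ to jumps $x$ with $\{v:x_v\neq 0\}=D$; then $\{N^D\}_{D\neq\emptyset}$ are independent PRMs with intensities $dt\otimes\Lambda^D$ for the corresponding exact-support restrictions $\Lambda^D$ of $\Lambda$. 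A key structural step is to show that under \ref{eq:explosion_1} the $\sigma$-algebra $\sigma(\mathbf X_A)$ is generated, up to null sets, by the $A$-projections of $\{N^D:D\cap A\neq\emptyset\}$, since infinite activity in every coordinate of $D$ allows the càdlàg path of $\mathbf X_A$ to resolve the underlying point process; analogous statements hold for $B$ and $C$.

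For the forward direction I would fix $R=\prod_{v\in V}R_v\in\mathcal R(\Lambda)$. Since $\Lambda(R)\in(0,\infty)$, the restriction of $N$ to $R\times(0,t]$ is a finite-intensity PRM, and the event that in $(0,t]$ there is exactly one jump in $R$ together with only sufficiently small jumps elsewhere has positive probability; on this event, the single large jump has location distributed as $\mathbb P_R$, and the increment $(X_A(t),X_B(t),X_C(t))$ equals this jump up to a small error controlled by $t$ and the compensator. The assumed process CI $\mathbf X_A\Perp\mathbf X_B\mid\mathbf X_C$, specialized to this event and passed to the limit $t\to 0$, yields $Y_A\Perp Y_B\mid Y_C$ for $Y\sim\mathbb P_R$; arbitrariness of $R$ gives $A\perp B\mid C\,[\Lambda]$. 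Notably, \ref{eq:explosion_1} is not used in this direction.

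For the reverse direction, assume \ref{eq:explosion_1} and $A\perp B\mid C\,[\Lambda]$. Applying the support implication~\eqref{supp_impl} to $\Lambda_{A\cup B}^0$ together with \ref{eq:explosion_1} forces $\Lambda^D=0$ whenever $D$ intersects both $A$ and $B$ but misses $C$, so the only support patterns that couple $\mathbf X_A$ with $\mathbf X_B$ also touch $C$. For each such $D$, $\Lambda$-CI applied to product sets contained in the exact-support set of $D$ forces the jump-location distribution under $\Lambda^D$ (restricted to any such $R$) to factor as $A\perp B\mid C$; patterns meeting at most one of $\{A,B\}$ are automatically compatible with the desired CI. The main obstacle is the final assembly: conditioning on $\mathbf X_C$ reveals a tangled amount of information about every $N^D$ with $D\cap C\neq\emptyset$, and one must lift the pointwise factorization of jump distributions through the (generally infinite) superposition of the independent PRMs $\{N^D\}$ to a genuine conditional factorization of the laws of the càdlàg processes $\mathbf X_A,\mathbf X_B$ given $\mathbf X_C$. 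I would handle this by first verifying factorization on cylinder events built from bounded-away-from-zero product sets and finitely many time points, where only finitely many jumps are relevant and a Palm/disintegration argument applies directly, and then extending by a monotone-class argument to all measurable events in the Skorokhod $\sigma$-algebras.
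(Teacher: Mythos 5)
Your overall strategy (decompose by jump support, work with the Poisson random measure) is in the right spirit, but both directions contain genuine gaps. In the forward direction, the step ``the assumed process CI, specialized to this event and passed to the limit'' is not valid for a general $R\in\mathcal R(\Lambda)$: the event that exactly one jump falls in $R$ (with all other jumps small) depends on the $A$- and $B$-components of the path, so it is not $\sigma(\mathbf{X}_C)$-measurable, and conditional independence is not preserved under conditioning on an event outside the conditioning $\sigma$-algebra. Since $\mathcal R(\Lambda)$ contains sets with $0_C\in R_C$ (only the $d$-dimensional origin must be excluded from $\bar R$), you cannot even identify the relevant jump times from $\mathbf{X}_C$; the paper's Example~\ref{ex:fixed_time} is exactly a warning that path-level CI does not survive this kind of extra conditioning. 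The paper circumvents this by extracting, for a fixed $c\in C$, the compound Poisson process of jumps with $\abs{x_c}\geq\epsilon$ --- whose jump times \emph{are} determined by $\mathbf{X}_C$, so that its $A$- and $B$-marginals are functions of $(\mathbf{X}_A,\mathbf{X}_C)$ and $(\mathbf{X}_B,\mathbf{X}_C)$ and inherit the CI --- and then invokes \citet[Theorem~4.1]{eng_iva_kir} to conclude that this restricted test class, together with the support independence $A\perp B\,[\Lambda^0_{A\cup B}]$ (obtained from a separate decomposition $\mathbf{X}=\mathbf{X}^{=0}+\mathbf{X}^{\neq 0}$ and Lemma~\ref{lem:independence_implication}), already yields $A\perp B\mid C\,[\Lambda]$. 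Your sketch addresses neither the sets with $0_C\in R_C$ nor the $\Lambda^0_{A\cup B}$ part.

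In the reverse direction you correctly isolate the hard step --- lifting the factorization of jump laws through the superposition of the $N^D$ once one conditions on $\mathbf{X}_C$ --- but the proposed ``Palm/disintegration plus monotone class'' is precisely the mechanism that needs to be supplied, and decomposing into up to $2^d-1$ exact-support PRMs makes the entanglement worse rather than better. The paper's resolution is an induction on $\abs{C}$: peel off one coordinate $c\in C$, split $\Lambda$ into its restrictions to $\Set{x_c=0}$ and $\Set{x_c\neq 0}$, treat the first part by the induction hypothesis (its $c$-marginal is deterministic and \ref{eq:explosion_1} passes to the restricted measure), and treat the second by approximating with the finite measures $\Lambda(\cdot\cap\Set{1/n<\abs{x_c}})$, for which conditioning on $\mathbf{X}_C$ fixes \emph{all} jump times so that the jump CI $Y_A\Perp Y_B\mid Y_C$ factorizes the conditional law directly; almost-sure limits are handled by Lemma~\ref{lem:cond_indep_limit}, and the two pieces are recombined using that conditioning on $\mathbf{X}_C$ is the same as conditioning on the pair $(\mathbf{X}'_C,\mathbf{X}''_C)$ (Appendix~\ref{sec:decomp_cond}). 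Without a concrete device of this kind, your assembly step remains an unproved claim rather than a proof.
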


The proof of this result is postponed to Appendix~\ref{proof:cond_indep_process_level}, but we sketch here the general idea, concentrating on the implication from right to left. It is possible to write $\mathbf{X}=\mathbf{X}'+\mathbf{X}''$, where $\mathbf{X}',\mathbf{X}''$ are independent L\'evy processes with L\'evy measures $\Lambda',\Lambda''$ given by $\Lambda$ restricted to $\Set{x_C=0}$ and $\Set{x_C\neq0}$, respectively. We then show that both processes satisfy the conditional independence between components $A$ and $B$ given $C$. For $\mathbf{X}''$ we know that all jumps have a non-zero $C$-component. Hence, conditioning on $\mathbf{X}_C''$ fixes all jump times. We combine this with the conditional independence $A\perp B\mid C\,[\Lambda]$. For $\mathbf{X}'$ we note that it is essentially a $(d-\abs{C})$-dimensional process since the $C$-component is deterministic. The associated L\'evy measure is $\Lambda_{V\setminus C}^0$ and due to \ref{eq:explosion_1} it satisfies \ref{eq:explosion_0}. Hence, we may apply Lemma~\ref{lem:independence_implication} since $A\perp B\mid C\,[\Lambda]$ implies $A\perp B\,[\Lambda_{V\setminus C}^0]$ by~\eqref{supp_impl}.

Conditional independence is a dependence property and, intuitively, should be independent of the marginal distributions.
For the classical case of $d$-dimensional random vectors with continuous marginals, indeed, two distributions that share the same copula also satisfy the same conditional independence statements. 
For a L\'evy process $\mathbf X$ with L\'evy measure $\Lambda$, the marginals are the univariate processes $\mathbf X_i$, $i\in V$, whose L\'evy measures are characterized by the tail functions $U_i$ defined in~\eqref{U_def}. The corresponding standardized L\'evy measure $\Lambda^*$ is defined in Section~\ref{sec:copula}.    
The next result shows that conditional independence statements for the sample paths of a L\'evy process with continuous marginal tail functions are the same for all processes sharing the same standardized L\'evy measure. This highlights again that the sample path conditional independence notion is natural for L\'evy processes.
\begin{proposition}\label{prop_copula}
Consider a standardized L\'evy measure $\Lambda^*$ and two sets of marginal tail functions $U_i$ and $U'_i$, $i\in V$, defining two L\'evy measures $\Lambda$ and $\Lambda'$, respectively. Assume that all $U_i(x)$ and $U'_i(x)$ are continuous and explode as $x\to \pm 0$. Then \[A \indep B \mid C\; [\Lambda]\qquad \Leftrightarrow\qquad A \indep B \mid C\; [\Lambda'].\]
\end{proposition}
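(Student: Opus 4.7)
The plan is to reduce the statement to a pushforward property. Both $\Lambda$ and $\Lambda'$ arise from the common standardized measure $\Lambda^*$ through coordinatewise, sign-preserving transformations determined by the marginal tail functions, so composing these yields a Borel injection $\psi \colon \R^d \setminus \{0\} \to \R^d \setminus \{0\}$ satisfying $\psi_* \Lambda = \Lambda'$; I would then check that the conditional independence of Definition~\ref{def:cond_indep_measure} is invariant under such transformations, and the claim would follow, together with the symmetric argument through $\psi^{-1}$.

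For the construction, fix $i \in V$. Continuity and explosion of $U_i$ at $0$ make the generalized inverse $V_i(u) = \inf\{x > 0 : U_i(x) \leq u\}$ a strictly decreasing function on $(0, \infty)$ with $V_i(u) \to 0$ as $u \to \infty$ and $V_i(u) \to \infty$ as $u \to 0$. Set $T_i(z) = V_i(1/|z|)\,\sgn(z)$, with the analogous construction on the negative half-line, and $T(z) = (T_1(z_1), \ldots, T_d(z_d))$. The defining identity~\eqref{levy_std1} of the standardized measure, applied orthant by orthant, yields $\Lambda = T_* \Lambda^*$; an identical construction from $U_i'$ gives $T'$ with $\Lambda' = T'_* \Lambda^*$. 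Setting $\psi = T' \circ T^{-1}$ on $T(\R^d \setminus \{0\})$, which carries full $\Lambda$-mass, produces a coordinatewise, Borel measurable, injective map with $\psi_* \Lambda = \Lambda'$; continuity and explosion of every $U_i$ and $U_i'$ force $\psi_i(z) \to 0$ if and only if $z \to 0$, and the same holds for $\psi^{-1}$.

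To transfer the conditional independence, I would show first that $\psi$ induces a bijection of the test classes $\mathcal R(\Lambda)$ and $\mathcal R(\Lambda')$: coordinatewise maps preserve the product structure, the neighborhood-at-$0$ property of $\psi$ and $\psi^{-1}$ preserves being bounded away from $0$, and positivity of the measure is preserved by the pushforward. Fix $R \in \mathcal R(\Lambda)$ and let $R' = \psi(R) \in \mathcal R(\Lambda')$. If $Y \sim \p_R$ then $\psi(Y) \sim \p_{R'}$ under $\Lambda'$, and coordinatewise injectivity of $\psi$ together with the Lusin--Suslin theorem gives $\sigma(\psi_I(Y_I)) = \sigma(Y_I)$ for each $I \subseteq V$. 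Consequently $Y_A \Perp Y_B \mid Y_C$ is equivalent to $\psi_A(Y_A) \Perp \psi_B(Y_B) \mid \psi_C(Y_C)$; ranging over $R \in \mathcal R(\Lambda)$ yields one implication, and the other is symmetric through $\psi^{-1}$.

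The main technical obstacle is that $U_i$ need not be strictly monotone: plateaus are permitted, provided $\Lambda_i$ assigns no mass to the corresponding open intervals, so that the ordinary inverse of $U_i$ is not a function. The generalized inverse $V_i$ circumvents this by being strictly monotone though possibly discontinuous, with image exactly the essential $\Lambda_i$-support. The continuity-plus-explosion hypothesis enters twice: once to ensure injectivity of $T_i$ (hence of $\psi$), and once to ensure the neighborhood-preserving behavior of $\psi$ and $\psi^{-1}$ at the origin, which is exactly what preserves the test class of sets bounded away from $0$.
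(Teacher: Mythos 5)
Your proposal is correct and takes essentially the same route as the paper: both construct the coordinatewise map given by composing one marginal tail function with a generalized inverse of the other (you merely factor it through $\Lambda^*$ as $\psi = T'\circ T^{-1}$), both use its injectivity on the relevant support to transfer conditional independence of the restricted laws, and both isolate continuity plus explosion of the $U_i$ as the hypotheses that make the inverse well behaved. The only organizational difference is that the paper checks the conditional independence on the reduced determining class of sets with a single coordinate bounded away from zero and then invokes \citet[Theorem~4.1]{eng_iva_kir}, whereas you push the full test class $\mathcal{R}(\Lambda)$ through $\psi$ directly (with the harmless caveat that your "bijection" of test classes only holds up to $\Lambda'$-null intersections with the range of $\psi$).
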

It should be noted that both continuity and explosiveness of $U_i$ are essential. By allowing $U'_i$ to be non-explosive, only the implication from left to right would remain true in general.

\section{L\'evy graphical models}\label{sec:levy_GM}

Building on the understanding of conditional independence on the level of the sample paths of a L\'evy process, in Section~\ref{levy_graph} we define both undirected and directed graphical models for L\'evy processes. 
The equivalence of Theorem~\ref{thm:cond_indep_process_level} allows us to construct examples of such L\'evy graphical models on the level of the stochastic processes, or on the L\'evy measure $\Lambda$. The latter case is particularly useful when the models possess densities, since then conditional independence is readily checked through density factorization as in~\eqref{dens_fact}. Section~\ref{sec:stable} introduces an extension of $\alpha$-stable models and proposes flexible models on tree structures.

\subsection{Definition}\label{levy_graph}

Let $G=(V,E)$ be an undirected graph with nodes $V=\Set{1,\dotsc,d}$ and edge set $E \subset V\times V$, where a pair $(i,j)\in E$ represents an edge between the nodes $i$ and $j$. For undirected graphs the ordering does not matter, that is, $(i,j)$ and $(j,i)$ correspond to the same edge.
For sets $A,B,C\subseteq V$ we say that $C$ separates $A$ and $B$ in $G$ if every path from any node in~$A$ to any node in $B$ goes through $C$. In this case we write $A \indep_G B \mid C$.

\begin{definition}
	Let $\mathbf X$ be a L\'evy process with characteristic triplet $(0,0,\Lambda)$ whose L\'evy measure satisfies~\ref{eq:explosion_1}, and let $G=(V,E)$ be an undirected graph. Then $\mathbf X$ is an undirected L\'evy graphical model on $G$ if the global Markov property holds for $\Lambda$, that is, for any disjoint sets $A,B,C\subseteq V$ we have
	\[A \indep_G B \mid C \quad \Rightarrow \quad A \indep B \mid C\; [\Lambda].\]
	The right-hand side is equivalent to the conditional independence $\mathbf{X}_A\Perp \mathbf{X}_B\mid \mathbf{X}_C$.
\end{definition}

We now turn to the case of a directed graph $G = (V,E)$, where an element $(i,j)\in E$ represents a directed edge from node $i$ to $j$. We say that $G$ is a directed acyclic graph (DAG) if it does not contain any directed cycles. 
There are several directed Markov properties, which require the  notions of parents $\text{pa}(v)$, ancestors $\text{an}(v)$ and descendants $\text{de}(v)$ of a node $v\in V$ in graph $G$; see \cite{lauritzen96} for details.
We say that $\Lambda$ satisfies the directed local Markov property with respect to the DAG $G$ if for every node $v \in V$
\begin{align}\label{local_MP}
	 v \indep V\setminus \{ v \cup \text{de}(v) \cup \text{pa}(v) \} \mid \text{pa}(v) \; [\Lambda]. 
\end{align}
Under condition~\ref{eq:explosion_1}, \citet[Corollary 7.2]{eng_iva_kir} show that the directed local Markov property is equivalent to the directed global Markov property, a stronger condition that is closely linked to the notion of $d$-separation \citep{lauritzen96}. 

\begin{definition}
	Let $\mathbf X$ be a L\'evy process with characteristic triplet $(0,0,\Lambda)$ whose L\'evy measure satisfies~\ref{eq:explosion_1}, and let $G=(V,E)$ be a DAG. Then $\mathbf X$ is a directed L\'evy graphical model on $G$ if the directed local  Markov property~\eqref{local_MP} holds for $\Lambda$.
\end{definition}

We now give a generic construction principle for directed L\'evy graphical models on a DAG. 
\begin{example}
	Let $G = (V,E)$ be a DAG and for any $i\in V$, let $\mathbf Z_i = (Z_i(t))_{t\geq 0}$ be a univariate L\'evy process with triplet $(0,0,\Lambda_i)$ whose L\'evy measures satisfy~\ref{eq:explosion_0}. For coefficients $\beta_{ij} \in\mathbb R$, define the  L\'evy processes by the linear structural equation model
	\begin{align}
		\mathbf X_i = \sum_{j\in \text{pa}(i)} \beta_{ij} \mathbf X_j + \mathbf Z_i, \quad j\in V.
	\end{align} 
	 The resulting $d$-dimensional L\'evy process $\mathbf X = (\mathbf X_i : i\in V)$ has L\'evy triplet $(0,0,\Lambda)$ whose L\'evy measure $\Lambda$ satisfies~\ref{eq:explosion_1}. Denoting the matrix of coefficients by $B = (\beta_{ij})$ with $\beta_{ij}=0$ if $j\notin\text{pa}(j; G)$, we can write more compactly $\mathbf X = B \mathbf X + \mathbf Z$, which implies 
	\[\mathbf X = (I - B)^{-1} \mathbf Z;\]	
	see \cite{drt2017} for a similar argument in the case of classical Gaussian graphical models. Therefore, the L\'evy measure of the process $\mathbf X$ is supported on rays spanned by the columns of the matrix $(I - B)^{-1}$.
	Moreover, $\mathbf X$ is a directed L\'evy graphical model on the directed graph $G$, and an undirected L\'evy graphical model on the undirected graph $G^*$ obtained from moralizing $G$. This follows essentially from \citet[][Corollary 7.8]{eng_iva_kir}.
\end{example}

If all univariate processes $\mathbf Z_i$, $i\in V$, are $\alpha$-stable with the stability index $\alpha \in(0,2)$, then the $d$-dimensional random vector $X(1)$, that is, the L\'evy process $\mathbf X$ from the above construction at time 1, corresponds to the graphical model introduced in \cite{mis_kur_16}. Our example extends this to general L\'evy measures and to the level of stochastic processes. Note that for Gaussian case $\alpha = 2$, the dependence structure of a $d$-dimensional Brownian motion can always be represented with the above linear structural equation models. This is clearly not the case for general $\alpha$-stable processes, which can exhibit much richer dependence.

While the above example is a good illustration, it is not very useful for statistical inference. The reason is that the corresponding L\'evy measures $\Lambda$ are concentrated on rays and therefore have discrete spectral measure. Consequently, $\Lambda$ cannot have a Lebesgue density, which results in unrealistic dependence structures and inhibits likelihood inference. Our theory goes far beyond discrete spectral measures and we can indeed construct rich parametric classes of sparse L\'evy processes on graphical structures. This is studied in the next section.

\subsection{Stable tree models}\label{sec:stable}

The class of $\alpha$-stable L\'evy processes introduced in Example~\ref{ex:stable} have homogeneous L\'evy measures, which allow for stronger structural results of the corresponding graphical models. In this section we extend this model to heterogenous stable processes to make it suitable for applications. We then define a class of L\'evy graphical models on tree structures as a sparse yet flexible dependence model.

The stability index $\alpha\in(0,2]$ is essential in describing stable processes. For $\alpha=2$ the process $\mathbf{X}$ is $\alpha$-stable if and only if it is a Brownian motion, that is, its characteristic triplet is $(\gamma,\Sigma,0)$. For $\alpha\in(0,2)$, on the other hand, $\mathbf{X}$ is $\alpha$-stable if and only if its triplet is of the form $(\gamma,0,\Lambda)$ (i.e., no Brownian component) and the L\'evy measure is $-\alpha$-homogeneous; see Example~\ref{ex:stable}. For any $i\in V$ the marginal tail functions $U_i$ are therefore also $-\alpha$-homogeneous and consequently,
\begin{equation*}
U_i(\pm x)= c^\pm_i |x|^{-\alpha}, \quad x\geq 0, 
\end{equation*}
where $c^\pm_i= U_i(\pm 1) = \Lambda(\pm x_i > 1)$. 
We remark that \ref{eq:explosion_1} is automatically satisfied if $\Lambda$ is $-\alpha$-homogeneous.

A very useful property for statistical inference of an $\alpha$-stable process $\mathbf X$ is that the probabilistic copula $C_t$ of the random vector $X(t)$ is time-invariant, that is, $C_t = C_{t'}$ for all $t,t' > 0$. This directly follows from the stability property~\eqref{eq:stable}.
Below we provide a wider class of processes satisfying this property. These processes still have a stable copula, and stable marginals but with different stability indices, which is an important relaxation for practical applications. It should be noted that the property of $X(t)$ having a constant probabilistic copula across all $t>0$ is rather an exception for a general L\'evy process.

\begin{definition}\label{def_hetero}
	Let $\Lambda^*$ be a standardized L\'evy measure that is  $-1$-homogeneous, that is, $\Lambda(hE)=h^{-1}\Lambda(E)$ for any $h>0$ and $E\in\B(\R^d)$.
	We let the marginal stable tail functions be 
	\begin{align}\label{marginal_tail}
		U_i(\pm x)=c^\pm_i |x|^{-\alpha_i}, x \geq 0 \text{ for some }c^\pm_i > 0,\alpha_i\in(0,2).
	\end{align}
	This uniquely defines a L\'evy measure $\Lambda$ by~\eqref{levy_std1} on all orthants with marginal tail functions $U_i$ and dependence specified by $\Lambda^*$. We refer to a L\'evy process $\mathbf X$ with triplet $(0, 0, \Lambda)$ as a heterogenous stable process. 
\end{definition}

We can see the heterogenous stable L\'evy process as an extension of stable processes, but with the important difference for modeling in practice that the marginals can have different stability indices $\alpha_i\in(0,2)$. A crucial property of this model is that it inherits the copula time invariance of the stable process.

\begin{proposition}\label{prop_cop_same}
A heterogenous stable process $\mathbf X$ with L\'evy measure $\Lambda$  satisfies Assumption~\ref{eq:explosion_1} and has the stochastic representation 
\begin{align}\label{ss_hetero}
	X(t)\stackrel{d}{=}\left(t^{1/\alpha_i}X_i(1)+\gamma_i(t)\right)_{i\in V},
\end{align}
for some function $\gamma:[0,\infty) \to \mathbb R^d$. This implies that $\mathbf X$ has time-invariant copulas, that is, all $X(t)$ have the same probabilistic copula $C_t = C_1$, $t>0$.
\end{proposition}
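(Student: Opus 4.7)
The heart of the matter is a scaling identity for $\Lambda$: setting $D_t = \diag(t^{1/\alpha_1}, \dots, t^{1/\alpha_d})$, I will show
\[
(D_t)_* \Lambda \;=\; t\,\Lambda \qquad \text{for every } t > 0.
\]
Given this identity,~\ref{eq:explosion_1}, the stochastic representation~\eqref{ss_hetero}, and the copula statement all follow in short order.

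\textbf{Step 1 (scaling identity).} Fix $s \in \{+,-\}^d$ and $x \in \mathcal O_s$, and set $B = \{y : s_i y_i \geq s_i x_i,\ i \in V\}$. Then $D_t^{-1} B = \{y : s_i y_i \geq s_i t^{-1/\alpha_i} x_i,\ i \in V\}$, and the tails~\eqref{marginal_tail} satisfy $U_i(t^{-1/\alpha_i} x_i) = t\, U_i(x_i)$. Applying~\eqref{levy_std1} and then the $-1$-homogeneity of $\Lambda^*$,
\[
\Lambda(D_t^{-1} B) = \Lambda^*\!\bigl(\{y : s_i y_i \geq s_i t^{-1}/U_i(x_i),\ i\in V\}\bigr) = t\, \Lambda^*\!\bigl(\{y : s_i y_i \geq s_i/U_i(x_i),\ i\in V\}\bigr) = t\, \Lambda(B).
\]
Product half-spaces of this form constitute a $\pi$-system generating $\B(\R^d)$, so the identity $(D_t)_* \Lambda = t\,\Lambda$ extends to all Borel sets by the standard monotone class argument.

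\textbf{Step 2 (verification of [A2] and the representation).} For $D \subseteq V$ and $i \in D$, the set $S = \{x \in \R^d : x_i \neq 0,\ x_j = 0 \ \forall j \in V \setminus D\}$ is invariant under $D_t$, since coordinatewise scaling by positive constants preserves which coordinates vanish. Hence $\Lambda_D^0(\{x_i \neq 0\}) = \Lambda(S) = \Lambda(D_t^{-1} S) = t\,\Lambda(S)$ for every $t > 0$, forcing $\Lambda(S) \in \{0, \infty\}$, which is~\ref{eq:explosion_1}. For the representation, both $X(t)$ and $D_t X(1)$ have no Brownian component and, by Step~1, share the L\'evy measure $t\Lambda$. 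Uniqueness in the L\'evy--Khintchine representation then forces these two infinitely divisible laws to differ only by a constant drift $\gamma(t) \in \R^d$, giving $X(t) \deq D_t X(1) + \gamma(t)$, which is exactly~\eqref{ss_hetero}.

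\textbf{Step 3 (copula invariance).} Since each marginal L\'evy measure has infinite activity (the tail $U_i$ explodes at~$0$), \citet[Theorem~27.4]{sato99} gives that each $X_i(t)$ has a continuous distribution, so the probabilistic copula of $X(t)$ is unique. Being invariant under the coordinatewise strictly increasing affine maps $x_i \mapsto t^{1/\alpha_i} x_i + \gamma_i(t)$, it coincides with the copula of $X(1)$, i.e.\ $C_t = C_1$. The main technical point is the monotone class extension in Step~1, which must also cover lower-dimensional sub-faces where some coordinates vanish (exactly the sets targeted by~\ref{eq:explosion_1}); this is routine since the generating $\pi$-system spans all of $\B(\R^d)$ and both measures are $\sigma$-finite on $\R^d \setminus \{0\}$. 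I prefer this L\'evy-triplet route over a direct characteristic-function calculation, because the truncation $\mathbf 1_{\|x\| \leq 1}$ in the L\'evy--Khintchine integral would otherwise generate an extra drift correction that must be carefully absorbed into $\gamma(t)$.
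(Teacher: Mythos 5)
Your proof is correct and follows essentially the same route as the paper: both rest on the scaling identity $t\Lambda(B)=\Lambda(D_t^{-1}B)$ derived from the $-1$-homogeneity of $\Lambda^*$ together with the power-law marginal tails, use it to force $\Lambda_D^0(\{x_i\neq 0\})\in\{0,\infty\}$, and identify the law of $X(t)$ with that of $D_tX(1)$ up to a deterministic drift, after which copula invariance is immediate. The only cosmetic difference is that the paper carries out the explicit change of variables in the L\'evy--Khintchine exponent and writes down the drift correction $\gamma'(t)$ coming from the truncation mismatch, whereas you invoke uniqueness of the characteristic triplet to absorb exactly that term.
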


Flexible parametric examples for heterogenous stable processes can be constructed easily. For instance, the standardized L\'evy measures in Examples~\ref{ex:log} and~\ref{ex:HR} are both $-1$-homogeneous and can therefore be combined with any marginal tail functions as in~\eqref{marginal_tail}. 
	
\begin{remark}\label{rem:asym}
	For applications, it is often infeasible to allow for different dependence structures on all $2^d$ orthants $\mathcal O_s$, $s\in\{+,-\}^d$. Instead, we can use the same measure for all orthants, but multiplied by different weights to introduce a certain asymmetry between orthants. In particular, in $d=2$ dimensions, let $\Lambda^*_+$ be a $-1$-homogeneous standardized L\'evy measure with mass only on the positive orthant $\mathcal O_{++}$, and define 
	\begin{align}
		\label{def2d}
		\Lambda^* (y: \pm y_1 \geq x_1, \pm y_2 \geq x_2) = m_{\pm, \pm} \Lambda^*_+(y: y_1 \geq x_1, y_2 \geq x_2), \quad x_1,x_2 \geq 0.
	\end{align}
	with weights $m_{++}, m_{--}, m_{+-}, m_{-+} \in [0,1]$ for the four orthants, respectively.
	Importantly, the marginal standardization of $\Lambda^*$ discussed in Section~\ref{sec:copula} requires that 
	\[m_{++} = m_{--} = 1- m_{+-} = 1-m_{-+},\]	
	that is, one parameter $m:= m_{++}\in [0,1]$ is sufficient to describe this model. In particular, if $m = 1/2$, there is symmetry between all orthants. If $m >1/2$, there is more mass on orthants $\mathcal O_{++}$ and $\mathcal O_{--}$, yielding positive dependence, and if $m < 1/2$ there is more mass on $\mathcal O_{+-}$ and $\mathcal O_{-+}$, corresponding to negative dependence.
\end{remark}

We discuss the case of heterogenous stable models on tree structures in more details, since our structure learning algorithm will concentrate on this class of graphs. A tree $T = (V,E)$ with nodes $V$ and edge set $E$ is a connected undirected graph without cycles. Trees are very sparse graphs since the number of edges $|E| = d-1$ is much smaller than then number of all possible edges $d(d-1)/2$.

Heterogenous stable models on tree have desirable properties for statistical inference. For instance, if the L\'evy measure $\Lambda$ admits a Lebesgue density $\lambda$ on $\mathbb R^d\setminus \{0\}$, then this density factorizes into lower-dimensional marginals as
\begin{align}\label{tree_fact} 
	\lambda(y) = \prod_{(i,j)\in E} \frac{\lambda_{ij}(y_i, y_j)}{\lambda_i(y_i)\lambda_j(y_j)} \prod_{i\in V} \lambda_i(y_i), \quad y \in\mathbb R^d\setminus\{0\},
\end{align}
where $\lambda_i$ and $\lambda_{ij}$, $i,j\in V$, are the univariate and bivariate marginal densities of $\Lambda$. The inverse of this result also holds: given a set of bivariate L\'evy measure densities with coinciding marginals on the intersection of the edges, \eqref{tree_fact} yields a valid $d$-dimensional L\'evy density $\lambda$ with corresponding L\'evy graph structure $T$ \citep[Theorem 6.4]{eng_iva_kir}; note that the integrability condition~\eqref{eq:integrability_cond} is satisfied because the marginal densities belong to valid L\'evy measures.
This density factorization is useful since it allows us to perform statistical inference separately for each bivariate model on the edge $(i,j)\in E$, which has huge computational advantages in higher dimensions.

We can also use~\eqref{tree_fact} to construct sparse L\'evy processes with tree structures. In particular, any bivariate parametric model (e.g., the Clayton model in Example~\ref{ex:log}) can be used for the $\lambda_{ij}$ and the conditional independence structure on $T$ yields a valid $d$-dimensional L\'evy process. A particularly important case arises if all bivariate distributions are from the H\"usler--Reiss model in Example~\ref{ex:HR}, since then the $d$-dimensional tree model is again in this model class.

\begin{example}
	Let $\Lambda^*$ be the standardized L\'evy measure of a H\"usler--Reiss model with paramter matrix $\Gamma$ as in Example~\ref{ex:HR} with mass on all orthants. Then $\Lambda^*$ is a L\'evy graphical model on the tree $T=(V,E)$ if and only if  $\Gamma$ satisfies the tree metric property
	\begin{align}\label{tree_metric}
		\Gamma_{hl} = \sum_{(i,j)\in \text{ph}(hl,T)} \Gamma_{ij},
	\end{align}
	where $\text{ph}(hl,T)$ is the unique undirected path from nodes $h$ to $l$ on tree $T$; see \cite{engelke-hitz} \cite{asenova2021inference} and \cite{engelke-volgushev} in the case of extremal tree models. Alternatively, we may also read off the conditional independence structure from the H\"usler--Reiss precision matrix, since $\Theta_{ij}=0$ whenever $(i,j)\notin E$ \citep{hen_eng_seg}.
\end{example}

In order to extend the asymmetry introduced in Remark~\ref{rem:asym} in two dimensions in a natural way to a general $d$-dimensional model, we rely on a L\'evy graphical model $\mathbf X$ on tree $T=(V,E)$ and the factorization~\eqref{tree_fact}.

\begin{definition}\label{def_hetero_tree}
	For each edge $(i,j)\in E$ of the tree $T=(V,E)$, let $\lambda^*_{ij}$ be the density of a model in~\eqref{def2d} with asymmetry parameter $m_{ij}\in[0,1]$ and some bivariate model on the positive orthant. Denote the corresponding standardized $d$-dimensional L\'evy measure implied by factorization~\eqref{tree_fact} by $\Lambda^*$. The latter is $-1$-homogeneous and its density on the orthant $\mathcal O_s$, $s\in\{+,-\}^d$, satisfies
	\begin{align}\label{tree_model}
		\lambda^*(x) = m_s \lambda_{(\text{sym})}^*(x), \quad x\in \mathcal O_s, \qquad   m_s = \prod_{(i,j)\in E}  m_{ij}^{\indnew\{s_i s_j > 0\}} (1-m_{ij})^{\indnew\{s_i s_j < 0\}}, 
	\end{align}
	where $\lambda^*_{(\text{sym})}$ is the symmetric standardized L\'evy density that arises from the choice $m_{ij} = 1/2$ for all $(i,j)\in E$.
	Together with an arbitrary choice of marginal tail functions as in~\eqref{marginal_tail}, we obtain a heterogenous L\'evy model $\Lambda$ that is a L\'evy graphical model on $T$.	
\end{definition}

The form of $m_s$ in~\eqref{tree_model} can be derived directly from the factorization~\eqref{tree_fact}. Importantly, in this model, the tree structure $T$ not only factorizes the density $\lambda^*$, but also yields a sparse representation of the asymmetry weighs $m_s$ on each orthant. This is a big simplification, since we only need $d-1$ parameters $m_{ij}$ on the edges $E$ of the tree to specify all $2^d$ weights on all orthants. This is particularly useful for statistical inference where the number of samples $n$ is typically much smaller $2^d$; see Section~\ref{sec:application}.

\section{Structure learning for trees}\label{sec:structure_learning_trees}

In this section, we study data-driven tree recovery for L\'evy tree models. 
Throughout, we assume that $\mathbf{X}$ is a $d$-dimensional L\'evy process with L\'evy measure $\Lambda$ following the heterogenous stable model in Definition~\ref{def_hetero_tree} with conditional independence structure given by the tree $T=(V,E)$. 
On a theoretical level, we first show that the tree structure is identifiable from bivariate summary statistics of the L\'evy copula. 
Based on this, we construct suitable estimators and propose a minimum spanning tree algorithm that consistently recovers the true underlying tree structure from discrete observations of the L\'evy process.

\subsection{Tree identifiability from bivariate L\'evy correlations}

We define the L\'evy correlation coefficient $\chi_{ij}$, $i,j\in V$, a bivariate statistic summarizing the dependence between two components $i$ and $j$ of the L\'evy process, by 
\begin{align}
	\label{chi_sym_def}
	\chi_{ij} := \Lambda^*(y: |y_i| > 1, |y_j| > 1)/2  \in [0,1], 
\end{align}
where $\Lambda^*$ is the standardized L\'evy measure in~\eqref{levy_std1} of $\Lambda$. Note that dividing by 2 together with the marginal standardization of $\Lambda^*$ discussed in Section~\ref{sec:copula} yields that $\chi_{ij}\in[0,1]$. 
This coefficient is an extension of the upper tail dependence coefficient introduced in \cite{eder_klup} for L\'evy measures with equal marginals. For our purpose, it is important to define it more generally for possibly heterogenous marginals by relying on the standardized L\'evy measure, and simultaneously for both large positive and negative jumps.
Larger values of $\chi_{ij}$ correspond to stronger dependence between the two components, and $\chi_{ij} = 0$ is equivalent to independence between the processes $\mathbf X_i$ and $\mathbf X_j$.

\begin{example}\label{ex:correlations}
	For parametric families it is typically easy to compute the L\'evy correlations. For instance, for the Clayton model in Example~\ref{ex:log} with symmetric mass on all orthants and parameter $\theta \in (0,1)$ we have $\chi_{ij} = 2 - 2^\theta$ for all $i,j\in V$. For the \HR{} model with symmetric mass on all orthants and parameter matrix $\Gamma$ as in Example~\ref{ex:HR}, the L\'evy correlation is $\chi_{ij} = 2 - 2\Phi(\sqrt{\Gamma_{ij}}/ 2)$, $i,j\in V$, where $\Phi$ is the standard normal distribution function.  
\end{example}

The next lemma will be crucial for building an estimator of this coefficient since it links the L\'evy correlation to a quantity that can be estimated from observations of the L\'evy process. 
\begin{lemma}\label{lem_chi}
	Let $\mathbf{X}$ be a L\'evy processes with L\'evy measure $\Lambda$ following the heterogenous stable model on the tree $T$ as in Definition~\ref{def_hetero_tree}. Then
	\begin{align}\label{chi_rep}
		 \chi_{ij} =  \lim_{q \to 1}  \chi_{ij}(q) = \lim_{q \to 1}  \frac{\mathbb P\{ |2F_{1i}(X_i(1)) - 1| > q, |2F_{1j}(X_j(1)) -1 | > q \}}{1-q}, 
	\end{align}
	where $F_{1k}$ is the distribution function of $X_k(1)$, $k\in V$.
\end{lemma}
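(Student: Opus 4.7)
The plan is to reduce the limit in~\eqref{chi_rep} to the Kallsen--Tankov approximation formula~\eqref{cop_approx} by exploiting the time-invariance of the probabilistic copula from Proposition~\ref{prop_cop_same}.

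First, since under the heterogenous stable assumption every marginal $X_i$ has infinite jump activity, the distribution functions $F_{1i}$ are continuous and the event $\{|2F_{1i}(X_i(1))-1|>q\}$ decomposes as the disjoint union of an upper tail event $\{F_{1i}(X_i(1)) > (1+q)/2\}$ and a lower tail event $\{F_{1i}(X_i(1)) < (1-q)/2\}$. Intersecting over coordinates $i$ and $j$ writes the numerator of $\chi_{ij}(q)$ as a sum of four ``corner'' probabilities indexed by signs $s\in\{+,-\}^2$, corresponding to large positive or large negative values of $X_i(1)$ and $X_j(1)$.

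Next, set $q' = (1+q)/2$, so that $1-q = 2(1-q')$ and $q'\to 1$ as $q\to 1$. Proposition~\ref{prop_cop_same} yields that the bivariate law of $(F_{1i}(X_i(1)), F_{1j}(X_j(1)))$ coincides with that of $(F_{ti}(X_i(t)), F_{tj}(X_j(t)))$ for every $t>0$. Choosing $t = 1-q'$ allows me to replace each corner probability by its time-$t$ analogue and then apply~\eqref{cop_approx}: for the positive-positive corner this gives
\[
\lim_{q'\to 1} \frac{\mathbb{P}\{F_{1i}(X_i(1)) > q',\, F_{1j}(X_j(1)) > q'\}}{1-q'} = \Lambda^*(y: y_i > 1,\, y_j > 1).
\]
The same argument, applied on each of the three remaining orthants via the obvious analogue of~\eqref{cop_approx}, gives convergence of the other three corner ratios to $\Lambda^*$ on the respective sub-orthant. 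Summing the four limits yields $\Lambda^*(y: |y_i|>1, |y_j|>1)$, and since the lemma divides by $1-q = 2(1-q')$ rather than $1-q'$, the resulting limit picks up a factor $1/2$ and equals $\chi_{ij}$ by the definition~\eqref{chi_sym_def}.

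The only delicate step is justifying the analogue of~\eqref{cop_approx} on the non-positive orthants: the paper quotes~\eqref{cop_approx} only on the positive orthant, but the underlying \citet[Theorem 5.1]{kallsen_tankov} is formulated for all orthants. Alternatively, each non-positive corner can be reduced to the positive case by applying the positive-orthant version to the reflected L\'evy process $(s_i X_i, s_j X_j)$, which is again heterogenous stable with standardized L\'evy measure obtained by negating the corresponding coordinates of $\Lambda^*$; the copula time-invariance from Proposition~\ref{prop_cop_same} is preserved under such coordinate-wise reflections because they are strictly monotone marginal transformations. I note that the argument never uses the tree structure, only that $\mathbf{X}$ is heterogenous stable, so the conclusion in fact holds for any such process.
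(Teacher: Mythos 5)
Your proof is correct and follows essentially the same route as the paper's: decompose the event $\{|2F_{1i}(X_i(1))-1|>q\}$ into upper and lower tail pieces, use the copula time-invariance of Proposition~\ref{prop_cop_same} to transfer the Kallsen--Tankov limit~\eqref{cop_approx} from small times to time $1$, and sum the four corner contributions. The only cosmetic difference is that you obtain the factor $1/2$ from the reparametrization $1-q=2(1-q')$ with threshold level $1$, whereas the paper evaluates $\Lambda^*$ at level $2$ and invokes $-1$-homogeneity; your extra care with the non-positive orthants (via the full statement of Kallsen--Tankov or coordinate reflection) is a welcome but minor addition.
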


We will now leverage the close connection of the L\'evy correlation coefficient to the extremal coefficient \citep{col1999, sch2003}, which is a measure for dependence in the distributional tails of a random vector. In particular, an important inequality for these coefficients in the context of extremal tree models was shown in \cite{engelke-volgushev}. Very similar arguments as therein yield inequalities for L\'evy correlations on tree models.
Here, $\phT(h,t)$ denotes the set of edges on the path from $h$ to $t$ on $T$. 
\begin{proposition}\label{prop:chi_ineq}
	Let $\mathbf{X}$ be a L\'evy process following the heterogenous stable model on the tree $T$ as in Definition~\ref{def_hetero_tree}. For any edge $(i,j)\in \phT(h,l)$, the L\'evy correlations satisfy 
	\begin{equation}
		\label{eq:chi_ineq}
		\chi_{hl}\leq \chi_{ij}. 
	\end{equation}
	Under the additional assumption that this inequality is strict as soon as $(i,j) \neq (h,l)$, the solution to the minimum spanning tree problem
	\begin{align}
		\label{mst}   
		T_{\MST} = \argmin_{T = (V,E)} \sum_{(i,j)\in E} -\log \chi_{ij},
	\end{align}
	is unique and satisfies $T_{\MST} = T$.
\end{proposition}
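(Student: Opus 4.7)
The plan is to prove the inequality \eqref{eq:chi_ineq} by first establishing the following key lemma: for any three distinct nodes $h, m, l$ of $T$ with $m$ on the path $\phT(h, l)$, we have $\chi_{hl} \leq \chi_{hm}$ (and by symmetry $\chi_{hl} \leq \chi_{ml}$). Since $\{m\}$ then separates $\{h\}$ from $\{l\}$, the global Markov property gives the conditional independence $\{h\} \Perp \{l\} \mid \{m\}\,[\Lambda^*]$. Once the lemma is in hand, the full claim follows by telescoping along the path: for any edge $(i, j) \in \phT(h, l)$, the node $j$ lies on $\phT(h, l)$ and $i$ lies on $\phT(h, j)$, so two applications of the lemma give $\chi_{hl} \leq \chi_{hj} \leq \chi_{ij}$.

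\textbf{Key lemma.} Because $\mathbf{X}$ is a heterogenous stable model, $\Lambda^*$ admits a Lebesgue density $\lambda^*$, and the tree factorization~\eqref{tree_fact} combined with the conditional independence above yields $\lambda^*_{hml}(y_h, y_m, y_l) = \lambda^*_{hm}(y_h, y_m)\, \lambda^*_{ml}(y_m, y_l)/\lambda^*_m(y_m)$. Introducing the conditional probabilities $F_h(y_m) = \mathbb{P}(|Y_h| > 1 \mid Y_m = y_m)$ and $F_l(y_m) = \mathbb{P}(|Y_l| > 1 \mid Y_m = y_m)$, each taking values in $[0, 1]$, a direct calculation gives
\[ \frac{\chi_{hl}}{2} = \int \lambda^*_m(y_m)\, F_h(y_m)\, F_l(y_m)\, dy_m, \qquad \frac{\chi_{hm}}{2} = \int \lambda^*_m(y_m)\, F_h(y_m)\, \ind{|y_m| > 1}\, dy_m. \]
Subtracting, $\chi_{hl} \leq \chi_{hm}$ is equivalent to $\int \lambda^*_m(y_m)\, F_h(y_m)\, [F_l(y_m) - \ind{|y_m|>1}]\, dy_m \leq 0$. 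The hard part, and the main obstacle, is that $F_l$ is \emph{not} pointwise dominated by $\ind{|y_m|>1}$; instead I would use that the $-1$-homogeneity of $\Lambda^*_{hm}$ and $\Lambda^*_{ml}$ together with the Cauchy standardization $\lambda^*_m(y_m) = 1/y_m^2$ force, via a polar change of variables, both $F_h$ and $F_l$ to be non-decreasing in $|y_m|$, while the difference $F_l - \ind{|y_m|>1}$ has zero $\lambda^*_m$-integral (since $\int F_l(y_m)\lambda^*_m(y_m) dy_m = \int \ind{|y_m|>1} \lambda^*_m(y_m) dy_m = 2$) and is nonnegative on $\{|y_m|<1\}$ and non-positive on $\{|y_m|>1\}$. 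A Chebyshev-type rearrangement argument—using that $F_h$ is comonotone with $\ind{|y_m|>1}$ but anti-comonotone with $F_l - \ind{|y_m|>1}$—then yields the inequality. This is parallel to the extremal-tree arguments of \cite{engelke-volgushev}.

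\textbf{MST uniqueness.} The second claim is a standard matroid exchange argument. Let $T' \neq T$ be any spanning tree on $V$, and choose an edge $e = (h, l) \in T' \setminus T$. Removing $e$ splits $T'$ into two connected components $V_h \ni h$ and $V_l \ni l$. Since $T$ contains the unique path $\phT(h, l)$ from $h \in V_h$ to $l \in V_l$, some edge $e' = (i, j)$ on $\phT(h, l)$ must have one endpoint in $V_h$ and the other in $V_l$, so $T'' := (T' \setminus \{e\}) \cup \{e'\}$ is again a spanning tree. Because $e \notin T$ while $e' \in T$, the two edges differ, and the strict-inequality hypothesis yields $\chi_{hl} < \chi_{ij}$, hence $-\log \chi_{hl} > -\log \chi_{ij}$. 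Consequently $T''$ has strictly smaller total weight than $T'$, so $T'$ is not optimal, and $T$ is the unique minimizer of \eqref{mst}.
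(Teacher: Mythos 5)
Your overall strategy is sound but genuinely different from the paper's. The paper's proof is a two-line reduction: it folds $\Lambda^*$ onto the positive orthant via $\bar\Lambda^*(x: x_i\geq u_i^{-1}\,\forall i) = \Lambda^*(x: |x_i|\geq u_i^{-1}\,\forall i)$, notes that $\bar\Lambda^*$ is still a tree graphical model on $T$ and a valid exponent measure of a max-stable distribution, and then cites \citet[Proposition 5]{engelke-volgushev} for \emph{both} the inequality and the minimum-spanning-tree identification. You instead give a self-contained argument: the telescoping reduction to the three-node lemma, the disintegration of $\chi_{hl}$ through the separating node $m$ using the density factorization, and a rearrangement inequality; and you prove the MST uniqueness by the standard cut/exchange argument rather than citing it. Both routes work; yours makes visible exactly which structural feature of the model drives the inequality, while the paper's buys brevity at the cost of pushing all the analysis into the folded measure and the cited result. (Your exchange argument for uniqueness is complete and correct. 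A cosmetic slip: with $\chi_{hl}=\Lambda^*(|y_h|>1,|y_l|>1)/2$ your displayed identities should read $2\chi_{hl}=\int \lambda^*_m F_hF_l$, not $\chi_{hl}/2$; the common factor is harmless.)

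The one place your sketch is genuinely thin is the monotonicity step. Homogeneity gives $F_h(y_m)=\int_{|z|>1/|y_m|}\lambda^*_{hm}(z,\operatorname{sign}(y_m))\,dz$, which is indeed non-decreasing in $|y_m|$ \emph{separately on each half-line}; but your rearrangement needs the cross-sign comparison $\inf_{|y_m|>1}F_h \geq \sup_{|y_m|<1}F_h$, i.e.\ that the two threshold values $\int_{|z|>1}\lambda^*_{hm}(z,+1)\,dz$ and $\int_{|z|>1}\lambda^*_{hm}(z,-1)\,dz$ coincide. This is \emph{not} forced by $-1$-homogeneity and Cauchy standardization alone — one can standardize the margins while loading the joint tail mass asymmetrically onto the half-planes $\{y_m>0\}$ and $\{y_m<0\}$. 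What saves you is the specific orthant structure of Definition~\ref{def_hetero_tree}: the weights in~\eqref{tree_model} satisfy $m_s=m_{-s}$ and the symmetric density is sign-flip invariant, so the bivariate margin has the form $\lambda^*_{hm}(y_h,y_m)=w_{\operatorname{sign}(y_h)\operatorname{sign}(y_m)}\,g(|y_h|,|y_m|)$ with $w_{++}=w_{--}$ and $w_{+-}=w_{-+}$, which makes $F_h$ a function of $|y_m|$ alone. You should state and verify this explicitly; with it, the Chebyshev-type step $\int \lambda^*_m F_h\,(F_l-\mbox{\rm\large 1}_{\{|y_m|>1\}}) \leq c\int \lambda^*_m (F_l-\mbox{\rm\large 1}_{\{|y_m|>1\}})=0$ with $c=F_h(1)$ goes through exactly as you describe.
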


There are several sufficient conditions for inequality~\eqref{eq:chi_ineq} to be strict. For instance, it is sufficient that the support of the L\'evy measure is the whole space $\mathbb R^d\setminus \{0\}$ \citep{engelke-volgushev}, which is satisfied for our heterogenous stable model with Clayton or \HR{} distributions. An even weaker condition for strict inequality is obtained in \cite{hu_peng_segers}.

\subsection{Statistical inference for L\'evy tree models}

The previous section established that knowledge of the L\'evy correlations for all bivariate pairs of processes identifies an underlying tree structure uniquely. In practice, the L\'evy process $\mathbf{X}=(X(t))_{t\geq0}$ is not observed in continuous time but rather at a finite number of discrete time points. In this section we derive asymptotic guarantees on structure recovery for estimators $\widehat T$ based on empirical versions of the extremal correlations. In statistical theory for L\'evy processes there are two popular regimes on how the processes are observed. We briefly discuss both cases and show that they result in the same estimator.

The first, more classical regime is that the process is observed as a time series at regular time steps $t=0,1,\dots, n$, where each time point represents a unit such as an hour, a day or a year. Asymptotic theory is then derived under the assumption that $n\to \infty$.  For building an estimator, we consider the increments 
\begin{align}
	\label{incr_time_series}
	\Delta(t) = X(t) - X(t-1), \quad t=1,\dots , n.
\end{align}
From the defining properties of a L\'evy process in Section~\ref{sec:levy_def} we see that the increments $\Delta(1),  \dots, \Delta(n)$ are $n$ independent samples of the random vector $X(1)$, that is, the process at time one.

In order to obtain an empirical estimator $\widehat \chi_{ij}$ of the L\'evy correlation $\chi_{ij}$ between the processes $\mathbf X_i$ and $\mathbf X_j$, we rely on the limiting representation in Lemma~\ref{lem_chi}. In particular, for some $k \in \{1,\dots, n\}$, we let $q = 1- k/n$ be a probability level at which we estimate $\chi_{ij}(q)$. We replace the marginal distribution functions $F_i$ and $F_j$ of $X_i(1)$ and $X_j(1)$, respectively, by their empirical counterparts defined as  
\begin{equation*}
\widehat{F}_i(x)=\frac{1}{n+1}\sum_{s=1}^n \indnew{\Set{\Delta_i(s)\leq x}},\quad x\in\R,
\end{equation*}
and similarly for $\widehat F_j(x)$. The estimator of $\chi_{ij}$ is then
\begin{equation}\label{eq:chi_hat}
\widehat\chi_{ij}= \frac{1}{k}\sum_{t=1}^n\indnew{\Set{|2\widehat{F}_i(\Delta_i(t)) - 1| > 1 - k/n, |2\widehat{F}_j(\Delta_j(t))-1|>1 - k/n}}.
\end{equation}
where we omit the dependence on $n$ and $k$ for notational simplicity.
This estimator has the tuning parameter $k$, or equivalently the probability threshold $q=1-k/n$, which determines the number of samples used for estimation. The choice of $k$ is a bias-variance tradeoff since we can decompose the estimation error  of $\chi_{ij}$ almost surely as
\begin{align}\label{bias_var} | \widehat \chi_{ij} - \chi_{ij}| \leq | \widehat \chi_{ij} - \chi_{ij}(q)|  + | \chi_{ij}(q) - \chi_{ij}|. 
\end{align}
The first term on the right-hand side represents the variance of the estimator and can be controlled by requiring $k\to \infty$. The second term is the bias and requires that $q\to 1$, or equivalently, that $k/n \to 0$ as $n\to\infty$; see Lemma~\ref{lem_chi}. A very similar bias-variance tradeoff is common in extreme value statistics \citep{deh2006a}.

On the other hand, in financial applications, often a so-called high-frequency regime is considered, where the process $\mathbf X$ is sampled at fine grids with inter-observation times of seconds or even fractions of a second \citep[e.g.,][]{jac2012, bue2013}. To model such data mathematically, one might consider a compact interval with an increasingly finer grid of observations. At the $n$th step, the L\'evy process is assumed to be observed at times $t=0,1/n, \dots, (n-1)/n, 1$ and asymptotic theory assumes that $n\to\infty$.  
The increments in this case are defined as 
\begin{align}
	\label{incr_hf}
	\Delta(t,n) = X(t/n) - X((t-1)/n), \quad t=1,\dots , n.
\end{align}
While $\Delta(1,n), \dots, \Delta(n,n)$ are again independent, their distribution now equals the one of $X(1/n)$, the process at time $1/n$, and therefore it is changing in each step $n$ of the asymptotics.
Replacing the increments and empirical distribution function in~\eqref{eq:chi_hat} by the new increments~\eqref{incr_hf} and their empirical distribution function $\widehat F_{ni}$, now depending on $n$, yields a seemingly different high-frequency estimator $\widehat \chi_{ij}^{\text{HF}}$. Proposition~\ref{prop_hf} in the Appendix shows that the distribution of the estimators of the two asymptotic regimes coincide, that is, $\widehat \chi_{ij}^{\text{HF}} \stackrel{d}{=} \widehat \chi_{ij}$; this follows essentially from the extended stability property~\eqref{ss_hetero} for heterogenous stable process. It therefore suffices to study the asymptotic properties of $\widehat \chi_{ij}$.

The next theorem connects the estimator $\widehat \chi_{ij}$ with a well-known estimator for extremal correlation in extreme value theory and leverages existing results to show consistent tree recovery for the minimum spanning tree based on these empirical L\'evy correlations.

\begin{theorem}\label{thm:recovery}
	Let $\mathbf{X}$ be a L\'evy processes following the heterogenous stable model on the tree $T$ as in Definition~\ref{def_hetero_tree}. Suppose that the inequalities in~\eqref{eq:chi_ineq} are strict if $(i,j) \neq (h,l)$. Let the intermediate sequence satisfy $k\to\infty$ and $k/n\to 0$, as $n\to \infty$. Then, the estimator of the tree structure
	\begin{align}\label{mst_emp}
		\widehat T = \argmin_{T = (V,E)} \sum_{(i,j)\in E} -\log \widehat \chi_{ij},
	\end{align}
	based on the empirical L\'evy correlations~\eqref{eq:chi_hat} consistently recovers the true underlying tree, that is,
	\[ \lim_{n\to\infty} \mathbb P( \widehat T = T) = 1.\]
\end{theorem}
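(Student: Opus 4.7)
The plan is to decouple the proof into (a) consistency of each pairwise estimator $\widehat\chi_{ij}$ for $\chi_{ij}$, and (b) stability of the minimum spanning tree under small perturbations of its weights. Step (b) is almost immediate: by Proposition~\ref{prop:chi_ineq} and the strict-inequality hypothesis, the tree $T$ is the \emph{unique} minimizer of~\eqref{mst}. Since there are only finitely many spanning trees on $V$, there exists $\delta>0$ such that any perturbation of the weights $-\log\chi_{ij}$ by at most $\delta$ (on pairs with $\chi_{ij}>0$) keeps $T$ as the unique minimizer; pairs with $\chi_{ij}=0$ correspond to genuinely independent components and can only inflate the MST cost, so we may restrict attention to the pairs appearing in some path of $T$, for which $\chi_{ij}>0$ and $-\log$ is continuous. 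Thus it suffices to prove $\widehat\chi_{ij}\cip \chi_{ij}$ for every $i\neq j$, and then a union bound over the $\binom{d}{2}$ pairs gives $\p(\widehat T=T)\to 1$.

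For step (a), I would first exploit that the observed increments $\Delta(1),\dotsc,\Delta(n)$ are, by properties (ii)--(iii) of a L\'evy process, i.i.d.~copies of $X(1)$. The estimator $\widehat\chi_{ij}$ in~\eqref{eq:chi_hat} is then precisely the classical two-sided rank-based estimator of the bivariate tail-dependence coefficient applied to the pseudo-observations $(2\widehat F_i(\Delta_i(t))-1,\, 2\widehat F_j(\Delta_j(t))-1)$ at threshold $q=1-k/n$. The bias--variance decomposition~\eqref{bias_var} splits the error as
\begin{equation*}
|\widehat\chi_{ij}-\chi_{ij}|\leq |\widehat\chi_{ij}-\chi_{ij}(q)|+|\chi_{ij}(q)-\chi_{ij}|,
\end{equation*}
where the bias term vanishes as $q\to 1$ by Lemma~\ref{lem_chi}, i.e.~under $k/n\to 0$. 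The variance term is then handled by standard consistency results for the empirical tail-dependence estimator based on i.i.d.~observations, under the intermediate-sequence regime $k\to\infty$, $k/n\to 0$; see for instance \citet[Chapter 7]{deh2006a}. Concretely, $\widehat\chi_{ij}$ is an empirical mean of indicator variables whose expectation is (up to a vanishing margin from using empirical quantiles) $\chi_{ij}(q)$, with variance of order $1/k$, so that $|\widehat\chi_{ij}-\chi_{ij}(q)|=\Oh_{\p}(1/\sqrt k)\to 0$.

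The only subtlety is that the ranks in~\eqref{eq:chi_hat} use the empirical distribution functions $\widehat F_i,\widehat F_j$ rather than the true continuous marginals $F_{1i},F_{1j}$ (continuity follows from infinite activity of the marginal processes via \citet[][Theorem 27.4]{sato99}, so ties occur with probability zero). This is the main technical point: one has to show that replacing $F_{1i}(\Delta_i(t))$ by its empirical counterpart changes the count in~\eqref{eq:chi_hat} only by $\oh_\p(k)$. This is a standard consequence of uniform concentration of $\widehat F_i$ around $F_{1i}$ at tail quantiles of order $k/n$, a classical empirical-process argument that goes through because the thresholding level $1-k/n$ satisfies $k\to\infty$. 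Finally, the high-frequency case reduces to the low-frequency case: Proposition~\ref{prop_hf} shows $\widehat\chi_{ij}^{\mathrm{HF}}\deq\widehat\chi_{ij}$, so the same consistency and tree-recovery conclusion applies verbatim. Combining (a) and (b) yields $\p(\widehat T=T)\to 1$ as $n\to\infty$.
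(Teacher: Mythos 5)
Your proposal is correct and follows essentially the same route as the paper: consistency of each $\widehat\chi_{ij}$ via the bias--variance split~\eqref{bias_var} together with standard extreme-value results for the empirical tail-dependence estimator under $k\to\infty$, $k/n\to 0$, followed by uniqueness and finite-perturbation stability of the minimum spanning tree. The only notable presentational difference is that the paper first decomposes the two-sided estimator into its four orthant contributions as in~\eqref{chi_split_population} and explicitly verifies multivariate regular variation of $X(1)$ (via the copula time-invariance of Proposition~\ref{prop_cop_same} and the approximation~\eqref{cop_approx}) before citing the consistency result of \citet{engelke-volgushev}, whereas you treat the two-sided estimator directly and spell out the MST perturbation argument in more detail.
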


The proof of this theorem relies on splitting the L\'evy correlation into the contributions from the four orthants, that is, 
\begin{align}\label{chi_split_population}
	\chi_{ij} = \chi^{++}_{ij} + \chi^{+-}_{ij} + \chi^{-+}_{ij} + \chi^{--}_{ij}, 
\end{align}
where $\chi^{++}_{ij} = \Lambda^*(y: y_i > 1, y_j > 1)/2$ and similarly for the other three components. The empirical counterparts of these four terms can be defined similarly to~\eqref{eq:chi_hat}; for details see the proof of Theorem~\ref{thm:recovery}. The consistency of each term can then be shown relying on results from extreme value theory.

\begin{remark}
	The connection between L\'evy processes and extreme value theory arises naturally since an infinite measure $\Lambda$ exploding at the origin is central in both areas \citep{Kabluchko2009, wan2010}. Combinations of the two fields are studied for stationary particle systems \citep{engelke2015} and max-stable processes \cite{engelke2016}.
\end{remark}

Once the underlying tree structure $T=(V,E)$ is established, the asymmetry parameters $m_{ij}$, $(i,j)\in E$ of the model in Definition~\ref{def_hetero_tree} have to be estimated. 
From~\eqref{def2d} and~\eqref{chi_sym_def}, we see that a candidate for an estimator of $m_{ij}$ is
\begin{align}\label{m_est}
	\widehat m_{ij} = \frac{\widehat \chi_{ij}^{++}+ \widehat \chi_{ij}^{--}}{\widehat \chi_{ij}}.	
\end{align}
The next proposition shows that this is indeed a consistent estimator for the asymmetry parameters.

\begin{proposition}\label{prop:m_consistency}
	Under the same assumptions as in Theorem~\ref{thm:recovery}, the estimator~\eqref{m_est} of the asymmetry parameter $m_{ij}$, $(i,j)\in E$ is consistent, that is, for any $\varepsilon >0$ we have
	\[ \lim_{n\to\infty} \mathbb P( | \widehat m_{ij} - m_{ij} |>\varepsilon) = 0.\]
\end{proposition}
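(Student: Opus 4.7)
The plan is to reduce this to a continuous mapping argument, leveraging the orthant decomposition~\eqref{chi_split_population} that is already used in the proof of Theorem~\ref{thm:recovery}. The key observation is that the population version of the ratio in~\eqref{m_est} equals $m_{ij}$ exactly. From the construction~\eqref{def2d} and the normalization $m_{++}=m_{--}=1-m_{+-}=1-m_{-+}$ derived in Remark~\ref{rem:asym}, if we write $\alpha_{ij} = \Lambda^*_+(y:y_i>1,y_j>1)/2$ for the symmetric reference mass on edge $(i,j)$, then
\begin{equation*}
\chi_{ij}^{++} = \chi_{ij}^{--} = m_{ij}\,\alpha_{ij}, \qquad \chi_{ij}^{+-} = \chi_{ij}^{-+} = (1-m_{ij})\,\alpha_{ij},
\end{equation*}
so that $\chi_{ij} = 2\alpha_{ij}$ and hence $(\chi_{ij}^{++}+\chi_{ij}^{--})/\chi_{ij} = m_{ij}$. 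Thus the estimator $\widehat m_{ij}$ is a continuous (rational) function of the four empirical orthant-wise correlations evaluated at a point where the denominator is strictly positive.

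Next I would verify that each empirical orthant-wise correlation is consistent. Define, analogously to~\eqref{eq:chi_hat},
\begin{equation*}
\widehat\chi_{ij}^{++} = \frac{1}{k}\sum_{t=1}^n \indnew{\Set{2\widehat F_i(\Delta_i(t))-1 > 1-k/n,\ 2\widehat F_j(\Delta_j(t))-1 > 1-k/n}},
\end{equation*}
and similarly for the other three orthants. These are precisely the standard empirical tail dependence estimators from extreme value theory, applied separately to the positive and negative tails of the marginals. The proof of Theorem~\ref{thm:recovery} already establishes consistency $\widehat\chi_{ij}^{s_1 s_2}\cip\chi_{ij}^{s_1 s_2}$ for every $s\in\{+,-\}^2$ under the intermediate sequence conditions $k\to\infty$, $k/n\to 0$; this follows from Lemma~\ref{lem_chi} (whose argument splits into the four orthants) together with standard rank-based tail empirical process results. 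Summing over the four orthants gives $\widehat\chi_{ij}\cip\chi_{ij}$ as well.

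Finally, since the strict inequality hypothesis from Proposition~\ref{prop:chi_ineq} ensures $\chi_{ij}>0$ for every edge, the map $(a,b,c,d)\mapsto (a+d)/(a+b+c+d)$ is continuous at $(\chi_{ij}^{++},\chi_{ij}^{+-},\chi_{ij}^{-+},\chi_{ij}^{--})$. Applying the continuous mapping theorem to
\begin{equation*}
\widehat m_{ij} = \frac{\widehat\chi_{ij}^{++}+\widehat\chi_{ij}^{--}}{\widehat\chi_{ij}^{++}+\widehat\chi_{ij}^{+-}+\widehat\chi_{ij}^{-+}+\widehat\chi_{ij}^{--}}
\end{equation*}
yields $\widehat m_{ij}\cip m_{ij}$, which is the desired conclusion.

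The only mildly delicate point is the second step: the consistency of the orthant-wise estimators $\widehat\chi_{ij}^{s_1 s_2}$ is not stated as a separate lemma, so I would be careful to note where in the proof of Theorem~\ref{thm:recovery} this is established (essentially, the argument for $\widehat\chi_{ij}$ already processes each orthant individually via the four indicator events). Everything else is bookkeeping and the continuous mapping theorem.
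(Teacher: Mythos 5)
Your proposal is correct and follows essentially the same route as the paper's own (very terse) proof: consistency of the orthant-wise estimators $\widehat\chi_{ij}^{++}$ and $\widehat\chi_{ij}^{--}$ carried over from the proof of Theorem~\ref{thm:recovery}, followed by the continuous mapping theorem. Your explicit verification that the population ratio equals $m_{ij}$ and your remark that the denominator is strictly positive are details the paper leaves implicit, but they do not change the argument.
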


We note that in this section, we have not assumed any particular parametric models for the densities $\lambda^*_{ij}$ in the heterogenous stable model since our structure estimation is fully non-parametric. If we assume a parametric model such as in Example~\ref{ex:log} or \ref{ex:HR}, we can 
obtain an estimate of the model parameters by inverting the closed-form representations of $\chi_{ij}$ in Example~\ref{ex:correlations}, for instance.

\section{Experiments}\label{sec:experiments}

\subsection{Simulation study}\label{sec:simu}

We describe how to simulate approximate sample paths of a heterogenous L\'evy process $\mathbf X$ with characteristic triplet $(\gamma,0,\Lambda)$, where $\gamma\in\mathbb R^d$ is the drift and $\Lambda$ is the L\'evy measure of  heterogenous L\'evy process $\mathbf X$ as in Definition~\ref{def_hetero}. Recall that $\Lambda^*$ in~\eqref{levy_std1} denotes the standardized L\'evy measure, and $\alpha_i\in (0,2)$ and $c_i^\pm > 0$ are the marginal parameters in~\eqref{marginal_tail}.
Since it is in general not possible to simulate L\'evy processes with infinite jump activity exactly, we follow the usual approach of 
approximating $\mathbf X$ by an auxiliary L\'evy process $\mathbf{X}\ph{(\epsilon)}$ for some $\epsilon >0$. To this end, define the norm $\|x\|_{c,\alpha} = \| (cx)^{1/\alpha} \|_\infty$, where the notation $(cx)^{1/\alpha}$ has to be read componentwise such that, for instance, for $x_i < 0$ the $i$th component is $(c_i^- x_i)^{1/\alpha_i}$. 
The auxiliary process is then defined as 
\begin{equation}\label{approx_process}
	X\ph{(\epsilon)}(t)=\gamma\ph{(\epsilon)} t+Y\ph{(\epsilon)}(t), \quad t\geq 0, 
\end{equation}
where the drift and the compound Poisson process $\mathbf Y\ph{(\epsilon)}$ have representations
\begin{equation*}
\gamma\ph{(\epsilon)}=\gamma-\int_{\Set{\epsilon\leq \|x\|_{c,\alpha}\leq 1}}x\,\Lambda(\dd x), \qquad Y\ph{(\epsilon)}(t) = \sum_{i=1}^{N(t)} Z_i,
\end{equation*}
respectively.
Here $\mathbf N$ is a homogeneous Poisson process with rate $\Lambda({\norm{x}_{c,\alpha}\geq \epsilon}) =\Lambda^*({\norm{x}_{\infty}\geq \epsilon})  $ and the $Z_i$ are independent samples from the probability distribution 
\[\p_\epsilon(A)=\Lambda(A\cap\Set{\norm{x}_{c,\alpha}\geq\epsilon})/\Lambda(\Set{\norm{x}_{c,\alpha}\geq\epsilon}), \quad A \subset \mathcal B(\mathbb R^d).\] 
The crucial ingredient for both parts of the approximation are exact samples from the measure~$\p_\epsilon$, since it allows us to estimate $\gamma^{(\epsilon)}$ via Monte Carlo methods and to simulate paths of $\mathbf Y^{(\epsilon)}$. For a wide range of parametric models for the L\'evy measure $\Lambda$, including the Clayton and the H\"usler--Reiss distributions, we can adopt the simulation technique in~\cite{dom2016} based on so-called extremal functions. Similarly as in \citet[Lemma 2]{engelke-hitz}, we may use rejection sampling to obtain a realization $Z^*$ of the distribution
\[\p^*_\epsilon(A)=\Lambda^*(A\cap\Set{\norm{x}_\infty\geq\epsilon})/\Lambda^*(\Set{\norm{x}_\infty\geq\epsilon}), \quad A \subset \mathcal B(\mathbb R^d);\]
this is implemented in the \texttt{graphicalExtremes} R package \citep{graphicalExtremes2022}. 
We can then transform to the original margins by defining a random vector $Z$ with entries
\[Z_j = \indnew{\Set{Z_j^* \geq 0}} (c_j^+Z_j^*)^{1/\alpha_j} - \indnew{\Set{Z_j^* < 0}} (-c_j^-Z_j^*)^{1/\alpha_j}, \quad  j\in V.\]
One can check that $Z \sim \mathbb P_\epsilon$.
The approximation is suitable since it converges in distribution to the target L\'evy process, that is, $\mathbf{X}\ph{(\epsilon)}\convd*\mathbf{X}$ as $\epsilon \to 0$ \citep[Chapter 7]{kallenberg3}. 

\begin{remark}\label{rem:small_jumps}
	The approximation~\eqref{approx_process} can be refined by introducing
	a $d$-dimensional Brownian motion with a certain covariance matrix $\Sigma$ to approximate the small jumps of $\mathbf X$. This can improve the convergence rate to the target process \citep{cohen-rosinski}.
	It is natural to ask whether the Brownian motion of this small jump approximation inherits the conditional independence statements from $\Lambda$. In Appendix~\ref{small_jump_approx} we show that under certain rather strict assumptions, this is indeed the case; generally we believe that it does however not hold.
\end{remark}

We perform a simulation study by first generating a random tree structure $T=(V,E)$ in dimensions $d\in \{10,30\}$. We then generate the parameter matrix $\Gamma$ of a \HR{} distribution by sampling the parameters $\Gamma_{ij}$ on the edges $(i,j)\in E$ from a uniform distribution $\text{Unif}[1,6]$. The remaining entires of $\Gamma$ are implied by the tree metric property~\eqref{tree_metric}. We then generate a discrete sample of the L\'evy process $\mathbf X$ with characteristic triplet $(0,0,\Lambda)$ at time points $t=0,1,\dots, n$, where $\Lambda$ follows a $-1$-homogeneous symmetric \HR{} model with parameter $\Gamma$; note that this is a heterogenous stable model on $T$ with $\alpha_i=1$, $i\in V$, as described in Definition~\ref{def_hetero_tree}. For the simulation we use the approximative method above. 
We then estimate the L\'evy correlations $\widehat \chi_{ij}$, $i,j\in V$, as in~\eqref{eq:chi_hat} for different probability thresholds $q = 1-k/n$ and the corresponding minimum spanning tree $\widehat T$ as in~\eqref{mst}. 

We repeat the whole procedure $500$ times. Figure~\ref{fig:sim_study} shows the proportion of times that the true underlying tree is recovered by our learning method, that is, $\widehat T = T$. Results are shown for different sample sizes $n$ and the different probability thresholds $q$.
We observe a clear bias-variance tradeoff as described in~\eqref{bias_var}: when the threshold $q$ is too small, the estimates $\widehat \chi_{ij}$ are biased and the tree is estimated poorly; if $q$ is very close to 1, then the variance of the $\widehat \chi_{ij}$ is too large. A good compromise in this case is in the interval $[0.8, 0.95]$, but in general the optimal choice will depend on the dimension $d$ and the parameter matrix $\Gamma$. We also observe that, as expected, estimation of the tree structure is easier in lower dimensions, in the sense that it requires smaller sample sizes.

\begin{figure}[tb]
	\includegraphics[width=0.49\textwidth]{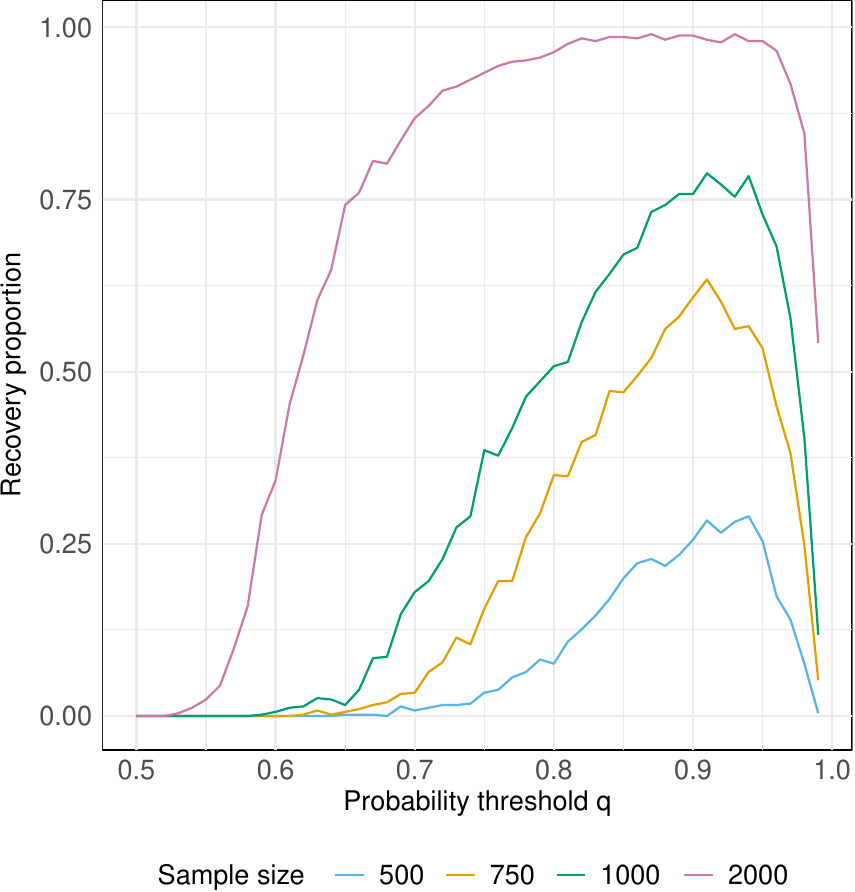}
	\includegraphics[width=0.49\textwidth]{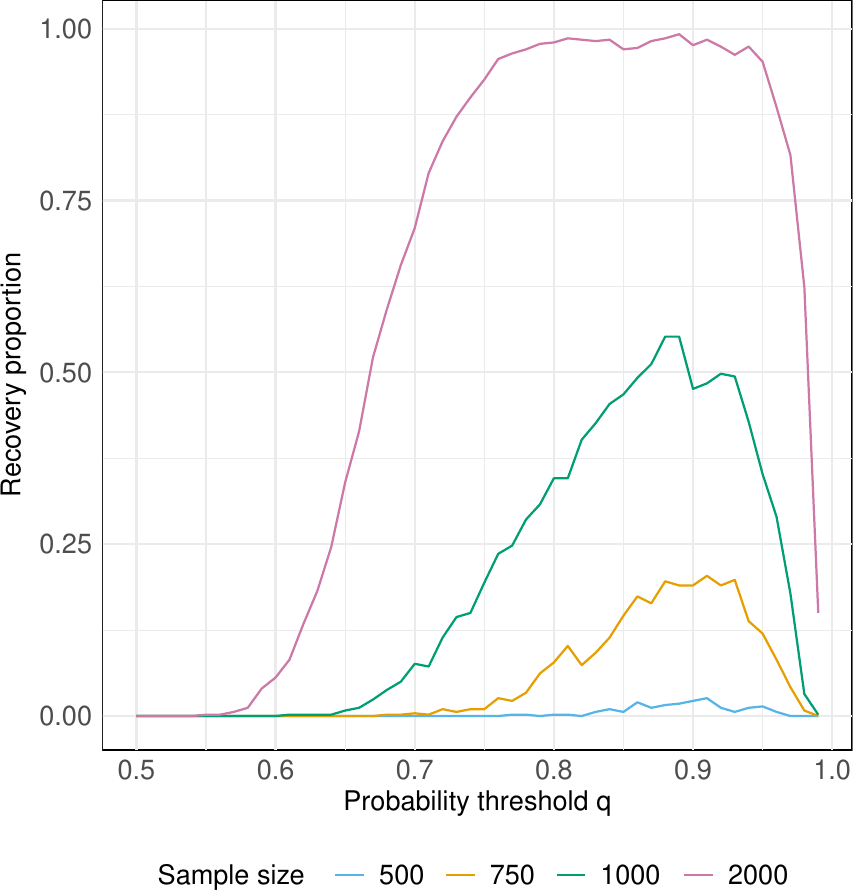}
	\caption{Proportion of correctly recovered tree structures $\widehat T = T$ as a function of the probability threshold $q = 1-k/n$ for data in $d=10$ (left) and $d=30$ (right) dimensions and differnt sample sizes $n$.}
	\label{fig:sim_study}
\end{figure}

\subsection{Application to financial data}\label{sec:application}

We apply our methodology for sparse dependence modeling of L\'evy processes and the data-driven structure learning for trees on 
daily stock prices for $d=16$ American companies during the period April 1, 2010, until December 31, 2015; see Table~\ref{table:companies} for a description of the stocks and their respective industry sectors. Following the Black--Scholes model for L\'evy processes \citep[e.g.,][Chapter 11]{tankov2003financial}, we assume that the stock prices $\mathbf S = (S(t): t\geq 0)$ can be modelled by the exponential of a $d$-dimensional L\'evy process $\mathbf X$, that is,
\[ S(t) = S(0)  \exp\{\gamma t + X(t) \}, \quad t\geq 0,\]
where $S(0)$ are the initial stock prices and $\gamma\in\mathbb R^d$ is a drift. We further assume that the L\'evy process $\mathbf X$ is a heterogenous stable model on some (unknown) tree $T = (V,E)$ as in Definition~\ref{def_hetero_tree}. In particular, the marginal processes $\mathbf X_i$ can have different stability indices $\alpha_i$ and asymmetry parameters $c_i^+$ and $c_i^-$, $i\in V$.

We have $n=1509$ daily observations of the stock prices. From this we compute the daily log-returns 
\[\log \{ S(t)/S(t-1)\}, \quad t=1,\dots, n,\]
which according to our model are independent observation of $X(1)$; recall the stationary and independent increments of the L\'evy process $\mathbf X$ in Section~\ref{sec:levy_def}. 
The right-hand side of Figure~\ref{fig:3D_path} shows the time series of the stock prizes of the three companies U.S.~Bancorp, Cisco Systems and Wells Fargo. To explore the dependence between the performance of the different companies, the top row of Figure~\ref{fig:scatter} shows scatter plots of the log-returns of all pairs of these companies. We can generally see that there seem to be returns, that is, increments of the L\'evy process, in all four orthants, indicating that also the standardized L\'evy measure $\Lambda^*$ has mass on all orthants; see approximation~\eqref{cop_approx} and Section~\ref{sec:copula}.
The second observation is that the dependence is highly asymmetric between the different orthants, with much larger mass on $\mathcal O_{++}$ and $\mathcal O_{--}$, indicating positive dependence. Our model in Definition~\ref{def_hetero_tree} therefore seems suitable since it has the flexibility to capture these features.

\begin{figure}[ht]
	\includegraphics[width=1\textwidth]{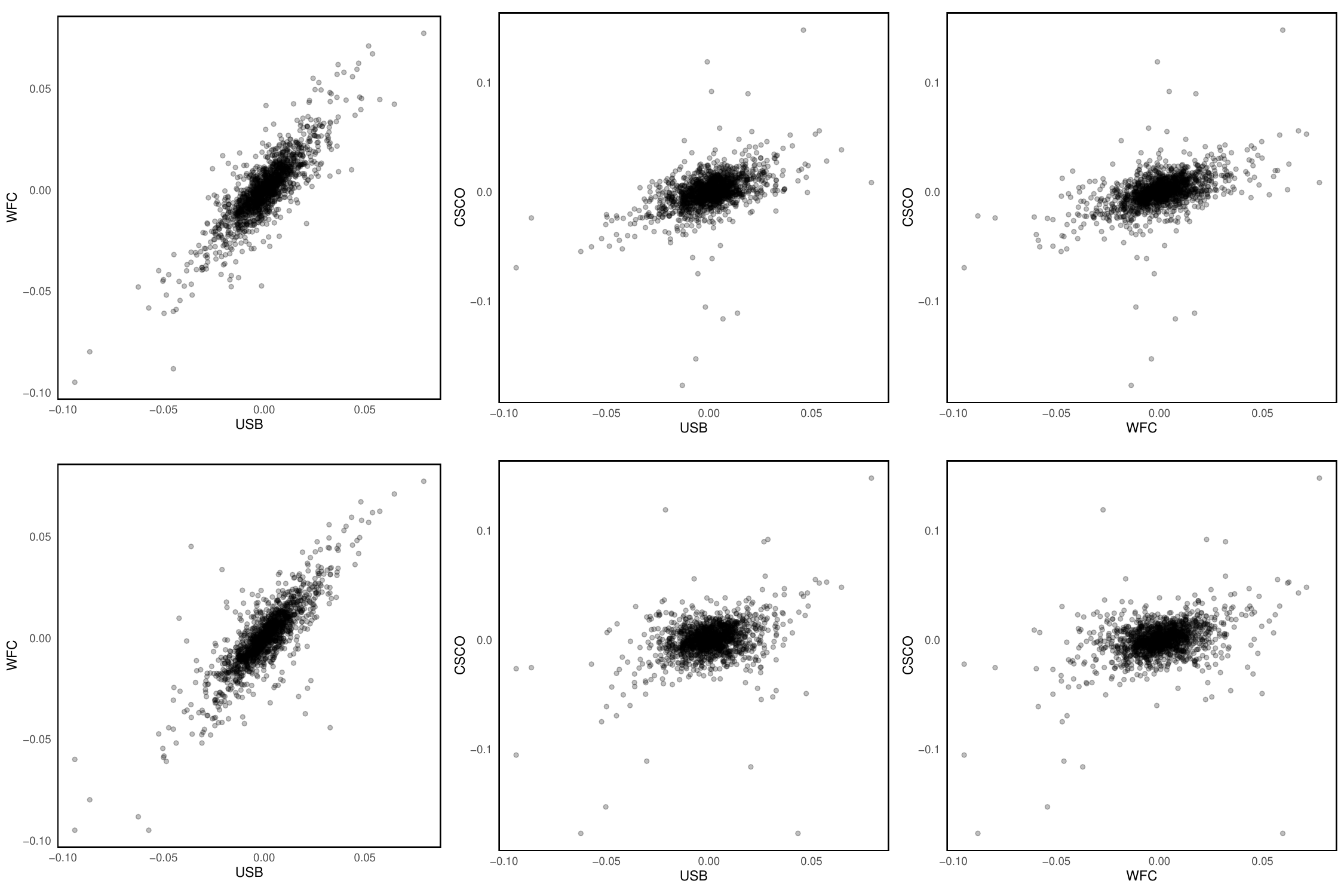}
	\caption{Scatter plots of {original daily stock returns} (top) and data simulated from {fitted L\'evy tree model} (bottom). Distance between nodes on $\widehat T$ is 1, 3 and 7 from left to right.}
	\label{fig:scatter}
\end{figure}

With these observations, we first apply our minimum spanning tree algorithm to learn the underlying tree structure $T$. Note that Theorem~\ref{thm:recovery} guarantees consistent recovery of the underlying tree without knowledge of the asymmetry parameters $m_{ij}$, $(i,j)\in E$. For this step it suffices to compute an estimate of the L\'evy correlation matrix $\widehat \chi_{ij}$, $i,j\in V$, based on the log-returns. The corresponding minimum spanning tree $\widehat T = (V, \widehat E)$ in~\eqref{mst_emp} is shown on the left-hand side of Figure~\ref{fig:est_tree}. The nodes in the graph are color-coded, with colors corresponding to the respective industry sectors of the companies.
We observe a nicely interpretable structure of the dependence between the stock returns. Indeed, without providing this information to the algorithm, in the estimated tree, stocks from the same industry tend to be closer (in terms of shortest paths on $\widehat T$) than stocks from different industries. This confirms the intuition that returns of companies in the same industry should be more directly dependent due to industry specific risks and drivers. 
In order to assess the uncertainty of the tree estimation, we subsample the original daily log-returns with subsample size $\lceil n/2\rceil $ and re-estimate the tree structure. We repeat this 300 times. The right-hand side of Figure~\ref{fig:est_tree} shows the tree where the width of each edge indicates the number of times this edge was contained in the estimated tree. Overall, the main structure $\widehat T$ seems to be fairly stable, especially the links within industries.

To assess whether this tree yields a reasonable model for the dependence dependence structure observed in the data in the top row of Figure~\ref{fig:scatter}, we have to specify a parametric model for the bivariate distributions on the edges of the tree $\widehat T$. We choose bivariate \HR{} distributions from Example~\ref{ex:HR} because its flexibility, but also since the corresponding $d$-dimensional model will again be of this class. Conveniently, consistent estimates of the \HR{} parameters on the edges of the tree can be directly obtained from the L\'evy correlations by
\[ \widehat \Gamma_{ij} = \left\{2\Phi^{-1}\left(1 - \widehat \chi_{ij}/2\right)\right\}^2, \quad (i,j) \in \widehat E;\]  
see Example~\ref{ex:correlations}. The tree structure $\widehat T$ then implies the parameters $\widehat \Gamma_{ij}$ also for non-edges $(i,j)\notin E$ by the tree metric property~\eqref{tree_metric}.

The only missing parameters for the dependence model in Definition~\ref{def_hetero_tree} are the asymmetry parameters $m_{ij}$ on the edges $(i,j)\in \widehat E$.	Following the consistency result in Proposition~\ref{prop:m_consistency}, we obtain estimates $\widehat m_{ij}$ using the empirical estimator in~\eqref{m_est}. Interestingly, all estimates $\widehat m_{ij}$, $(i,j)\in \widehat E$, are contained in the interval $[0.8, 1]$, which corresponds to strong positive dependence. This agrees with the initial observation from Figure~\ref{fig:scatter}.

Our fitted dependence model can be used to generate new sample paths of stock prices of the 16 selected companies. Such simulations can be used for stress testing and risk assessment. In addition, they help to assess the model fit. Up to now, we have operated on the standardized scale of the standardized L\'evy measure $\Lambda^*$. In order to obtain samples of the stock processes $\mathbf S$ on the original scale, we need to transform back to the original marginal distributions. One option would be to estimate the marginal parameters $\alpha_i\in (0,2)$ and $c^+_i, c^-_i > 0$, $i\in V$, of the heterogenous stable model in Definition~\ref{def_hetero}; see \cite{zol1986} for classical methods. Since we are interested in dependence modeling, we choose to use the empirical versions of the marginal tail functions $U_i$ based on the observed increments in each components. More precisely, we generate a path of $\mathbf X^*$ under the standardized L\'evy measure $\Lambda^*$, and then map the increments of the $i$th marginal process to those of the observed log-returns of the $i$th stock with the same rank, that is, the largest to the largest, etc. This couples the marginal processes to the observations but uses the dependence structure from $\Lambda^*$; see \cite{FOMICHOV2021407} for a related coupling in one-dimensional L\'evy processes.
This approach is also similar to using empirical distribution functions in classical multivariate dependence modeling. 
The bottom row of Figure~\ref{fig:scatter} shows the scatter plot of simulated log-returns of three pairs of stocks. We can see that the model is able to capture the strength of dependence as well as the asymmetry very well. This is not only true for stocks that are adjacent in the tree $\widehat T$ and whose dependence is directly modelled (e.g., U.S.~Bancorp and Wells Fargo), but also for stocks that are far away in $\widehat T$ (e.g., Wells Fargo and Cisco Systems) and whose dependence is therefore implied by the underlying conditional independence structure. This shows that the tree assumption yields a reasonable model for this data set. Moreover, the corresponding sample paths of our fitted stock price process $\mathbf S$ is shown on the left-hand side of Figure~\ref{fig:3D_path}. Again, the stochastic properties resemble those of the observed processes on the left-hand side of Figure~\ref{fig:3D_path}.

\begin{figure}[tb]
	\centering
	\includegraphics[width=.52\textwidth]{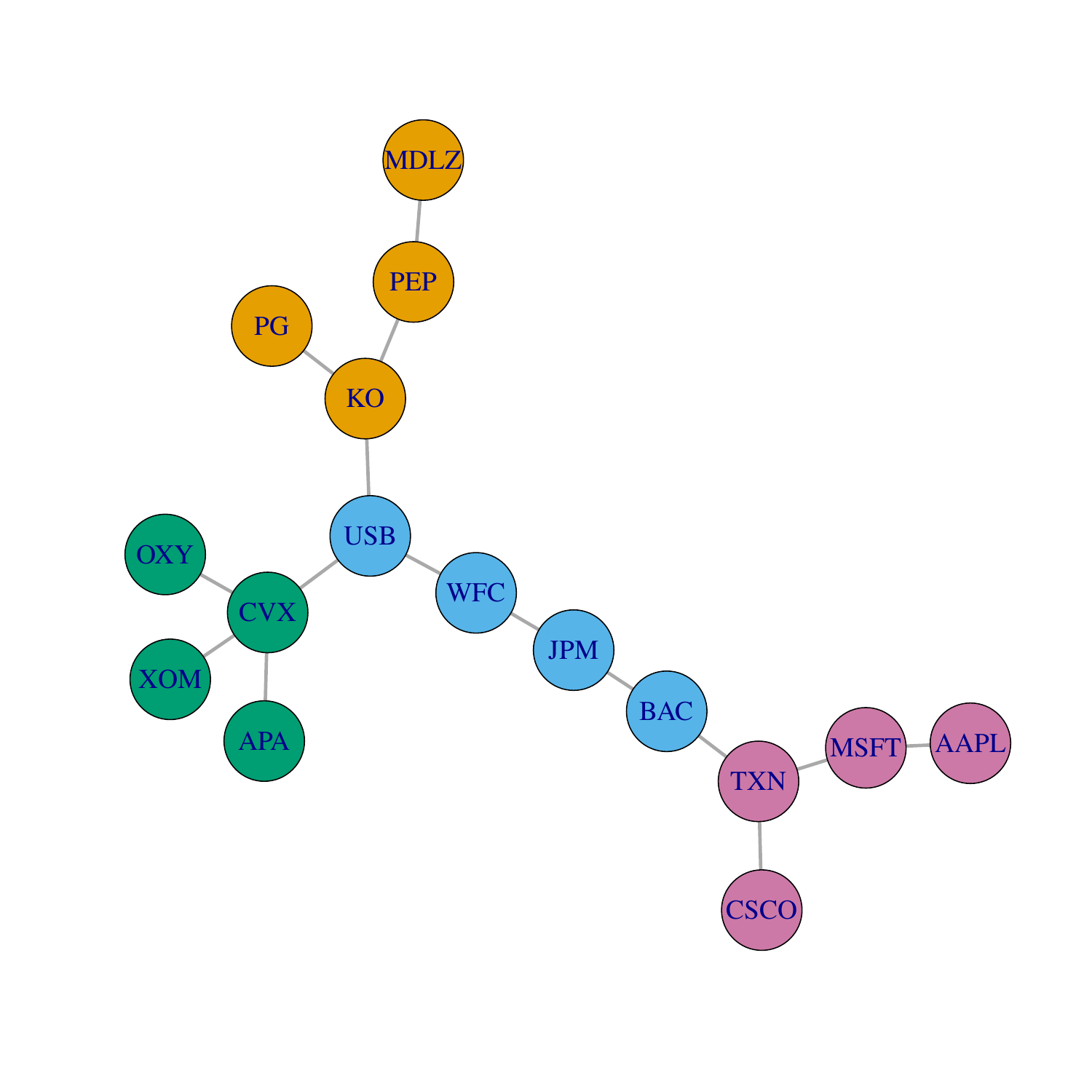} \hspace*{-3em}
	\includegraphics[width=.52\textwidth]{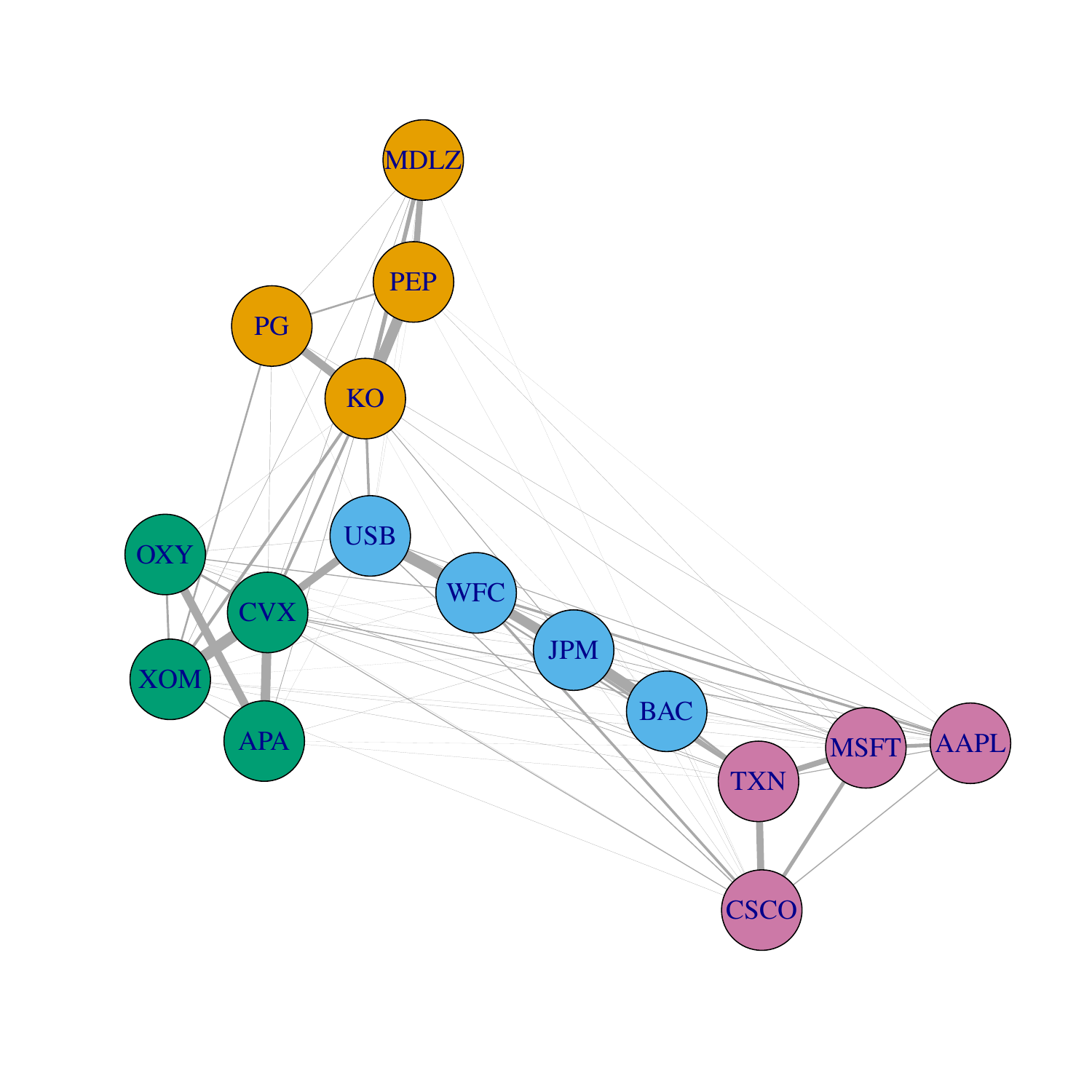}
	\caption{Left: Estimated minimum spanning tree $\widehat T$ for the stock price data. Right: Results for 300 estimated trees on subsampled data; the width of each edge indicates the number of times this edge was in the estimated tree. Colors indicate the corresponding industry sectors of the companies: Financials (yellow), Consumer Staples (blue), Technology (purple), Energy (green).}
	\label{fig:est_tree}
\end{figure}

\begin{table}[tb]
	\centering
	\begin{tabular}{|l|l|l|}
		\hline
		\textbf{Ticker} & \textbf{Company Name}  & \textbf{Industry Sector}                          \\ \hline
		JPM             & JPMorgan Chase         & Financials                              \\ \hline
		WFC             & Wells Fargo            & Financials                            \\ \hline
		BAC             & Bank of America        & Financials                              \\ \hline
		USB             & U.S. Bancorp           & Financials                               \\ \hline
		KO              & The Coca-Cola Company  & Consumer Staples                       \\ \hline
		PEP             & PepsiCo, Inc.          & Consumer Staples          \\ \hline
		MDLZ            & Mondelez International & Consumer Staples              \\ \hline
		PG              & Procter \& Gamble      & Consumer Staples  \\ \hline
		CVX             & Chevron Corporation    & Energy                               \\ \hline
		OXY             & Occidental Petroleum   & Energy                          \\ \hline
		XOM             & Exxon Mobil            & Energy                              \\ \hline
		APA             & APA Corporation        & Energy                        \\ \hline
		AAPL            & Apple Inc.             & Technology    \\ \hline
		MSFT            & Microsoft Corporation  & Technology        \\ \hline
		CSCO            & Cisco Systems          & Technology    \\ \hline
		TXN             & Texas Instruments      & Technology                   \\ \hline
	\end{tabular}
	\caption{Company names and industry sectors for selected tickers in the data.}
	\label{table:companies}
\end{table}

\section{Conclusion}

In this paper we introduce graphical models for L\'evy processes and characterize the conditional independence properties in terms of the L\'evy measure. Our theory opens the door to a rich mathematical theory and efficient statistical methodology for L\'evy processes. 

We consider the special case of tree graphs to obtain sparse statistical models. Many extensions of our work in terms theory and methodology are possible. For instance, in our asymptotic theory we rely on the heterogenous stable model to ensure that information on the L\'evy measure can be obtained from observations of the L\'evy process $\mathbf X$ at fixed distances in time. In the high-frequency setting with decreasing inter-observations times the assumption of marginal stability can potentially be dropped \citep[e.g.,][]{bas1982}.
Even for for the low-frequency setting with observations at fixed distances, there are methods based on minimum-distance estimators to go beyond stability \citep{reiss2009}.

Many connections to recent work on sparse extreme value theory can be established. More general L\'evy graphical models than trees could be learned through regularized methods as \cite{engelke2022a}, \cite{wan2023graphical} and \cite{lederer2023extremes}. Moreover, positive dependence between the marginals of the L\'evy process \citep{REZ2023,roettgerSchmitz2023} or statistical inference in the presence of latent processes \citep{engelke2024extremal} could be studied.

\appendix

\section{Proofs and other technical details}

\subsection{Decompositions and conditioning}\label{sec:decomp_cond}

The L\'evy process $\mathbf{X}$ may be represented by its L\'evy--Itô decomposition $\mathbf{X}=\mathbf{J}+\mathbf{W}$, where $\mathbf{J}$ is a L\'evy process with characteristic triplet $(0,0,\Lambda)$, and $\mathbf{W}$ is a Brownian motion with drift $\gamma$ and covariance matrix $\Sigma$ which is independent of $\mathbf{J}$. Note that $\mathbf{J}$ contains all the jumps of $\mathbf{X}$. For $C\subseteq\Set{1,\dotsc,d}$ the processes $\mathbf{J}_C,\mathbf{W}_C$ may be obtained as almost sure limits of processes $\mathbf{J}_C\ph{(n)},\mathbf{W}_C\ph{(n)}$ created from $\mathbf{X}_C$ as $n\to\infty$ \citep[e.g.,][\S2.4]{applebaum}. Thus, $\mathbf{J}_C,\mathbf{W}_C$ are $\bar\sigma(\mathbf{X}_C)$-measurable, where $\bar\sigma(\mathbf{X}_C)$ denotes the $\p$-completion of $\sigma(\mathbf{X}_C)$. For $F\in\sigma(\mathbf{J}_C,\mathbf{W}_C)$ we may therefore write $F=F'\cup N$, where $N$ is a null-set and $F'\in\sigma(\mathbf{X}_C)$. For an integrable random variable $Z$ one immediately finds that
\begin{equation*}
\int_F {\Mean{Z\given\mathbf{X}_C}} \mathrm d\mathbb P = \int_{F'} {\Mean{Z\given\mathbf{X}_C}} \mathrm d\mathbb P= \int_{F'} Z \mathrm d\mathbb P =\int_{F} Z \mathrm d\mathbb P.
\end{equation*}
Since $\Mean{Z\given\mathbf{X}_C}$ is $\sigma(\mathbf{J}_C,\mathbf{W}_C)$-measurable it follows that
\begin{equation*}
\Mean{Z\given\mathbf{X}_C}=\Mean{Z\given\mathbf{J}_C,\mathbf{W}_C}
\end{equation*}
almost surely.

In addition to the L\'evy--Itô decomposition we may represent $\mathbf{X}$ in other ways. Suppose for simplicity that $\mathbf{X}$ is a L\'evy process with triplet $(\gamma,0,\Lambda)$. Define $\Lambda\ph{(1)}=\Lambda(\cdot\cap\Set{x_C=0_C})$ and $\Lambda\ph{(2)}=\Lambda(\cdot\cap\Set{x_C\neq0_C})$. Then $\mathbf{X}$ may be represented as the independent sum of two L\'evy processes $\mathbf{X}\ph{(1)},\mathbf{X}\ph{(2)}$ with triplets $(\gamma,0,\Lambda\ph{(1)})$ and $(0,0,\Lambda\ph{(2)})$. The processes $\mathbf{X}_C\ph{(1)},\mathbf{X}_C\ph{(2)}$ are constructed from $\mathbf{X}_C$ and with arguments similar to the above we find that
\begin{equation*}
\Mean{Z\given\mathbf{X}_C}=\Mean{Z\given\mathbf{X}_C\ph{(1)},\mathbf{X}_C\ph{(2)}}
\end{equation*}
almost surely for any integrable random variable $Z$.

\subsection{Proof of Proposition~\ref{prop:fixed_times}}\label{proof:fixed_times}

\begin{proof}
Assume that $X_A(t)\Perp X_B(t)\mid X_C(t)$ for all $t\geq0$. We need to show that
\begin{equation}\label{eq:cond_fdd}
\Set{X_A(t_i)}_{i=1}^k\Perp\Set{X_B(t_i)}_{i=1}^k\mid\mathbf{X}_C
\end{equation}
for any $k\in\N$ and $0=t_1<\dotsc<t_k$.

For each $n\geq k$ we let $S\ph{(n)}=\Set{s\ph{(n)}_j}_{j=1}^n$ be a collection of time points such that
\begin{itemize}
\item For any $n\geq k$ and $j=1,\dotsc,n-1$ we have $0\leq s\ph{(n)}_j<s\ph{(n)}_{j+1}\leq t_k$.
\item For any $n\geq k$ and $i=1,\dotsc,k$ we have $t_i\in S\ph{(n)}$.
\item The sequence of sets $(S\ph{(n)})$ is increasing.
\item There is the convergence $\max_{j=1,\dotsc,n-1}(s\ph{(n)}_{j+1}-s\ph{(n)}_j)\to0$.
\end{itemize}

Now, for each $n\geq k$ and  $j=1,\dotsc,n-1$ we let $I\ph{(n)}(j)=X(s\ph{(n)}_{j+1})-X(s\ph{(n)}_j)$ denote the increment of $\mathbf{X}$ between times $s\ph{(n)}_j$ and $s\ph{(n)}_{j+1}$. Using stationarity and independence of the increments along with the initial conditional independence assumption we deduce that
\begin{equation*}
\Set{I\ph{(n)}_A(j)}_{j=1}^{n-1}\Perp\Set{I\ph{(n)}_B(j)}_{j=1}^{n-1}\mid\Set{I\ph{(n)}_C(j)}_{j=1}^{n-1}.
\end{equation*}
From the increments we may construct $\Set{X_A(t_i)}_{i=1}^k$ and $\Set{X_B(t_i)}_{i=1}^k$, and we therefore have that
\begin{equation*}
\Set{X_A(t_i)}_{i=1}^k\Perp\Set{X_B(t_i)}_{i=1}^k\mid\Set{I\ph{(n)}_C(j)}_{j=1}^{n-1}\quad\text{for all}\quad n\geq k.
\end{equation*}

It is sufficient to prove \eqref{eq:cond_fdd} with $\mathbf{X}_C$ replaced by $(X_C(s))_{0\leq s\leq t_k}$ since conditioning on the process $(X_C(t_k+s)-X_C(t_k))_{s\geq0}$ is added using independence. By the above conditional independence it is enough to show the identity
\begin{equation*}
\sigma(X_C(s),0\leq s\leq t_k)=\G,
\end{equation*}
where $\G=\cup_{n\geq k}\sigma(I_C\ph{(n)}(j),j=1,\dotsc,n-1)$. The inclusion $\supseteq$ is obvious. For the other inclusion it suffices to show that $X_C(s)$ is the limit of a sequence of $\G$-measurable random vectors for any $s\in[0,t_k]$. For each $n\geq k$ we pick an index $j_n$ such that the sequence $(s\ph{(n)}_{j_n})$ converges to $s$ from the right. Then $X_C(s\ph{(n)}_{j_n})$ converges to $X_C(s)$, and since each $X_C(s\ph{(n)}_{j_n})$ is $\G$-measurable, this concludes the proof.
\end{proof}

\subsection{Proof of Proposition~\ref{Brownian_CI}}\label{proof:Brownian_CI}

\begin{proof}
	Proposition~\ref{prop:fixed_times} implies the direction from right to left. Since the Brownian motion is a stable process, it suffices to have the conditional independence for one $t_0>0$, as discussed above.

	For the other direction, we show that conditional independence statement for a Wiener process $\mathbf W_A \Perp \mathbf W_B \mid \mathbf W_C$ implies 
	the respective conditional independence at a fixed positive time, say 1: $W_A(1) \Perp W_B(1) \mid W_C(1)$. 
	We start from
	\[W_A(1) \Perp W_B(1) \mid \mathbf W_C, \]
	and observe that the conditioning variable can be reduced to $(W_C(t): t\in [0,1])$ by independence of increments.
	Furthermore, it can be replaced by the pair $W_C(1), (B_C(t): t\in [0,1])$ with $B(t) := W(t)-tW(1)$ being a Brownian bridge. It is well-known and easy to see that $W(1) \Perp (B(t): t\in [0,1])$ and so the bridge can be dropped from the conditioning set yielding the required statement. 
\end{proof}

\subsection{Proof of Proposition~\ref{prop:killed_process}}
\begin{proof}
For any $T>0$ we note that conditioning on $\mathbf{X}_C$ is the same as conditioning on the pair $\Set{\mathbf{X}_C^T,(X_C(T+t)-X_C(T))_{t\geq0}}$. Here we use the fact that $\mathbf{X}$ is a.s.\ continuous at time $T$.

Assume that $\mathbf{X}_A\Perp\mathbf{X}_B\mid\mathbf{X}_C$ and let $T>0$. Then $\mathbf{X}_A^T\Perp\mathbf{X}_B^T\mid\mathbf{X}_C$. Since $\mathbf{X}^T$ is independent of $(X_C(T+t)-X_C(T))_{t\geq0}$ we also have conditional independence given just $\mathbf{X}_C^T$.

For the opposite implication we assume that $\mathbf{X}_A^T\Perp\mathbf{X}_B^T\mid\mathbf{X}_C^T$ for all $T>0$. For $k\in\N$ and $0\leq t_1<\dotsc<t_k$ we pick $T>t_k$ and note that $X_A(t_i)=X_A^T(t_i)$ for all $i=1,\dotsc,k$ (and similarly for the $B$-component). Hence, $\Set{X_A(t_i)}_{i=1}^k\Perp\Set{X_B(t_i)}_{i=1}^k\mid\mathbf{X}_C^T$. Finally we employ independence to further condition on the process $(X_C(T+t)-X_C(T))_{t\geq0}$. 
\end{proof}

\subsection{Proof of Proposition~\ref{prop:add_gaussian}}\label{proof:add_gaussian}

\begin{lemma}\label{lem:cond_indep_limit}
Let $(X_n),(Y_n)$ be sequences of random variable defined on a probability space~$(\Omega,\ff,\p)$, taking values in Polish spaces $S_X,S_Y$, and assume that there exist random variables $X,Y,Z$ such that $X_n\to X$ a.s., $Y_n\to Y$ a.s.\ and $X_n\Perp Y_n\mid Z$ for all $n\geq1$. Then $X\Perp Y\mid Z$.
\end{lemma}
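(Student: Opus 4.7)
The plan is to reduce conditional independence to a factorization identity for conditional expectations of bounded continuous test functions, and then pass to the limit using the conditional dominated convergence theorem. Concretely, I would use the following characterization (standard for random elements in Polish spaces, see e.g.\ Kallenberg, Chapter 8): for random elements $U,V,W$ with $U,V$ taking values in Polish spaces, one has $U\Perp V\mid W$ if and only if
\begin{equation*}
\Mean{f(U)g(V)\given W}=\Mean{f(U)\given W}\Mean{g(V)\given W}\qquad\text{a.s.}
\end{equation*}
for every pair of bounded continuous functions $f\colon S_X\to\R$ and $g\colon S_Y\to\R$. The sufficiency of bounded continuous test functions follows from the fact that on a Polish space the bounded continuous functions are convergence-determining and, by a monotone class argument, generate the Borel $\sigma$-algebra.

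Fix such $f$ and $g$. By hypothesis, for every $n\geq 1$,
\begin{equation*}
\Mean{f(X_n)g(Y_n)\given Z}=\Mean{f(X_n)\given Z}\Mean{g(Y_n)\given Z}\qquad\text{a.s.}
\end{equation*}
Since $f$ and $g$ are continuous, the almost sure convergences $X_n\to X$ and $Y_n\to Y$ give $f(X_n)\to f(X)$ and $g(Y_n)\to g(Y)$ almost surely, and hence also $f(X_n)g(Y_n)\to f(X)g(Y)$ almost surely. All these sequences are uniformly bounded (by $\norm{f}_\infty$, $\norm{g}_\infty$ and their product), so the conditional dominated convergence theorem applies and yields the almost sure convergences
\begin{align*}
\Mean{f(X_n)\given Z}&\to\Mean{f(X)\given Z},\\
\Mean{g(Y_n)\given Z}&\to\Mean{g(Y)\given Z},\\
\Mean{f(X_n)g(Y_n)\given Z}&\to\Mean{f(X)g(Y)\given Z}.
\end{align*}
Passing to the limit in the displayed identity (after discarding a single null set that accommodates the three convergences and the exceptional set of the identity itself) produces
\begin{equation*}
\Mean{f(X)g(Y)\given Z}=\Mean{f(X)\given Z}\Mean{g(Y)\given Z}\qquad\text{a.s.}
\end{equation*}

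Since $f$ and $g$ were arbitrary bounded continuous functions, the characterization recalled in the first paragraph gives $X\Perp Y\mid Z$, which is the claim. The only delicate point is the selection of a common exceptional null set: one should first argue that one may restrict to a countable convergence-determining family of bounded continuous functions on each of $S_X$ and $S_Y$ (which exists by separability of $C_b$ for Polish spaces in the topology of uniform convergence on compacta combined with tightness), and then take the countable union of the exceptional null sets. With that routine adjustment the proof is complete.
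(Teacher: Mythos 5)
Your proof is correct, but it takes a genuinely different route from the paper's. The paper forms a regular conditional distribution $\mu(Z,\cdot)$ of the \emph{entire} collection $((X_n),(Y_n),X,Y)$ given $Z$ (using that this collection lives in a Polish space), observes that under this kernel the pairs are independent and the convergences $x_n\to x$, $y_n\to y$ hold $\mu(Z,\cdot)$-almost surely, and then invokes the classical unconditional fact that independence is preserved under almost sure convergence, applied realization by realization of the kernel. You instead stay at the level of conditional expectations: you characterize conditional independence through the factorization identity for bounded continuous test functions and pass to the limit via conditional dominated convergence. Your route avoids regular conditional distributions entirely (at the cost of the monotone-class step showing that continuous test functions suffice), while the paper's route avoids the test-function machinery by delegating the limit argument to the well-known unconditional statement; both are standard and both use the Polish-space hypothesis, yours for the generating/multiplicative-class property of $C_b$ and the paper's for the existence of the disintegration. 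One remark: your closing concern about a common exceptional null set across all test functions is a non-issue, since the definition of conditional independence (as in the paper, and in your test-function characterization) only requires the factorization to hold almost surely \emph{for each fixed pair} of test sets or test functions, with the null set allowed to depend on the pair; the countable convergence-determining family is therefore not needed, though including it does no harm.
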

\begin{proof}
The collection of random variables $((X_n),(Y_n),X,Y)$ takes values in a Polish space, thus establishing the existence of a regular conditional distribution given $Z$. Denoting this probability kernel by $\mu$ we note that $\p$-almost surely
\begin{equation*}
\mu(Z,\Set{x_n\in A,y_n\in B})=\mu(Z,\Set{x_n\in A})\mu(Z,\Set{y_n\in B})
\end{equation*}
for all $A\in\B(S_X),B\in\B(S_Y)$, where, e.g., $\Set{x_n\in A}=\Set{((x_m),(y_m),x,y)\in S_X^\N\times S_Y^\N\times S_X\times S_Y\given x_n\in A}$. Furthermore, $\mu(Z,\Set{x_n\to x,y_n\to y})=1$ almost surely; see properties of kernels in \citet[][Chapter 8]{kallenberg3}.
Now, since independence is preserved under almost sure convergence the result follows.
\end{proof}

\begin{proof}[Proof of Proposition~\ref{prop:add_gaussian}]
Assume that $\mathbf{J}_A\Perp\mathbf{J}_B\mid\mathbf{J}_C$ and $\mathbf{W}_A\Perp\mathbf{W}_B\mid\mathbf{W}_C$. Recall from Appendix~\ref{sec:decomp_cond} that conditioning on $\mathbf{X}_C$ is the same as conditioning on $(\mathbf{J}_C,\mathbf{W}_C)$. We combine this with the independence of $\mathbf{J}$ and $\mathbf{W}$ to obtain the identity
\begin{align*}
\MoveEqLeft
\Mean{\ind{E_A\times F_A}(\mathbf{J}_A,\mathbf{W}_A)\ind{E_B\times F_B}(\mathbf{J}_B,\mathbf{W}_B)\given\mathbf{X}_C} \\
&=\Mean{\ind{E_A\times E_B}(\mathbf{J}_A,\mathbf{J}_B)\given\mathbf{J}_C}\Mean{\ind{F_A\times F_B}(\mathbf{W}_A,\mathbf{W}_B)\given\mathbf{W}_C}
\end{align*}
for any Borel sets $E_A,E_B,F_A,F_B$.
The conditional expectations on the right-hand side factorize due to the assumed conditional independence. Applying the above identity twice with appropriately chosen $E_A,E_B,F_A,F_B$ yields
\begin{align*}
\MoveEqLeft
\Mean{\ind{E_A\times F_A}(\mathbf{J}_A,\mathbf{W}_A)\ind{E_B\times F_B}(\mathbf{J}_B,\mathbf{W}_B)\given\mathbf{X}_C} \\
&=\Mean{\ind{E_A\times F_A}(\mathbf{J}_A,\mathbf{W}_A)\given\mathbf{X}_C}\Mean{\ind{E_B\times F_B}(\mathbf{J}_B,\mathbf{W}_B)\given\mathbf{X}_C}.
\end{align*}
Hence, the pairs $(\mathbf{J}_A,\mathbf{W}_A)$ and $(\mathbf{J}_B,\mathbf{W}_B)$ are conditionally independent given $\mathbf{X}_C$. The stated conditional independence follows immediately.

Assume instead that $\mathbf{X}_A\Perp\mathbf{X}_B\mid\mathbf{X}_C$. The processes $\mathbf{J}_A,\mathbf{W}_A$ can be constructed as almost sure limits of functions of $\mathbf{X}_A$, and similarly for the $B$-component. According to Lemma~\ref{lem:cond_indep_limit} we then have
\begin{equation*}
\mathbf{J}_A\Perp\mathbf{J}_B\mid\mathbf{X}_C\qtq{and}\mathbf{W}_A\Perp\mathbf{W}_B\mid\mathbf{X}_C.
\end{equation*}
As previously discussed, conditioning on $\mathbf{X}_C$ is the same as conditioning on the pair $(\mathbf{J}_C,\mathbf{W}_C)$. Now the claimed conditional independence follows since $\mathbf{J}$ and $\mathbf{W}$ are independent.
\end{proof}

\subsection{Proof of Theorem~\ref{thm:cond_indep_process_level}}\label{proof:cond_indep_process_level}

\begin{proof}
\emph{Part 1:} Assume that $\mathbf{X}_A\Perp\mathbf{X}_B\mid\mathbf{X}_C$. Let $c\in C$ and $\epsilon>0$ We may write $\mathbf{X}$ as the independent sum $\mathbf{X}=\mathbf{X}^{\geq\epsilon}+\mathbf{X}^{<\epsilon}$, where $\mathbf{X}^{\geq\epsilon}$ is a compound Poisson process consisting of the jumps of $\mathbf{X}$ in $\Set{\abs{x_c}\geq\epsilon}$. Then $\mathbf{X}^{\geq\epsilon}_A$ is a function of $\mathbf{X}_A,\mathbf{X}_C$ and similarly for $\mathbf{X}^{\geq\epsilon}_B$. It follows that $\mathbf{X}^{\geq\epsilon}_A\Perp\mathbf{X}^{\geq\epsilon}_B\mid\mathbf{X}_C$. Since $\mathbf{X}^{\geq\epsilon}_C,\mathbf{X}^{<\epsilon}_C$ are functions of $\mathbf{X}_C$ and \textit{vice versa}, we have that 
that the generated $\sigma$-algebras coincide $\sigma(\mathbf X_C) = \sigma(\mathbf{X}^{\geq\epsilon}_C,\mathbf{X}^{<\epsilon}_C)$. This implies that 
$\mathbf{X}^{\geq\epsilon}_A\Perp\mathbf{X}^{\geq\epsilon}_B\mid\mathbf{X}^{\geq\epsilon}_C,\mathbf{X}^{<\epsilon}_C$, which is the same as $\mathbf{X}^{\geq\epsilon}_A\Perp\mathbf{X}^{\geq\epsilon}_B\mid\mathbf{X}^{\geq\epsilon}_C$ by independence. We may write
\begin{equation*}
X^{\geq\epsilon}(t)=\sum_{n=1}^{N(t)}Y(n),\For{$t\geq0$,}
\end{equation*}
where $\mathbf{N}$ is a Poisson process with rate $\Lambda({\abs{x_c}\geq\epsilon})$ and $(Y(n))$ is a sequence of i.i.d.\ random vectors independent of $\mathbf{N}$ with $Y(1)\sim\p_{c,\epsilon}=\Lambda(\cdot\cap\Set{\abs{x_c}\geq\epsilon})/\Lambda({\abs{x_c}\geq\epsilon})$. Since $\mathbf{N}$ and $(Y(n))$ can be obtained from $\mathbf{X}^{\geq\epsilon}$ we find that $Y_A(1)\Perp Y_B(1)\mid Y_C(1)$. According to \citet[Theorem~4.1]{eng_iva_kir} it remains to prove that $A\perp B\,[\Lambda_{A\cup B}^0]$. We consider a different decomposition $\mathbf{X}=\mathbf{X}^{=0}+\mathbf{X}^{\neq0}$, where $\mathbf{X}^{=0},\mathbf{X}^{\neq0}$ are independent L\'evy processes with L\'evy measures given by $\Lambda$ restricted to $\Set{x_C=0_C}$ and $\Set{x_C\neq0_C}$ respectively. The process $\mathbf{X}^{=0}$ can be constructed as an almost sure limit of $\sigma(\mathbf{X})$-measurable processes (we refer to the discussion in Appendix~\ref{sec:decomp_cond}). By applying Lemma~\ref{lem:cond_indep_limit} we find that $\mathbf{X}^{=0}_A\Perp \mathbf{X}^{=0}_B\mid\mathbf{X}^{=0}_C$, and in fact we have $\mathbf{X}^{=0}_A\Perp \mathbf{X}^{=0}_B$ since $\mathbf{X}^{=0}_C$ is deterministic. Then the independence $A\perp B\,[\Lambda_{A\cup B}^0]$ follows from Lemma~\ref{lem:independence_implication} as we note that $\mathbf{X}^{=0}_{A\cup B}$ has L\'evy measure $\Lambda_{A\cup B}^0$.\\

\emph{Part 2:} Assume that $A\perp B\mid C\,[\Lambda]$ and $C\neq \emptyset$. First we consider the case where $\Lambda$ is a finite L\'evy measure such that $\Lambda({x_C=0_C})=0$. Then $\mathbf{X}$ is the sum of a linear drift and a compound Poisson process. To be precise,
\begin{equation*}
X(t)=t\gamma+\sum_{n=1}^{N(t)}Y(n),\For{$t\geq0$,}
\end{equation*}
where $\mathbf{N}$ is a Poisson process with rate $\Lambda(\R^d)$ and $(Y(n))$ is a sequence of i.i.d.\ random variables, independent of $\mathbf{N}$ and with $Y(1)\sim\Lambda(\cdot)/\Lambda(\R^d)$. We note that conditioning on $\mathbf{X}_C$ will also fix~$\mathbf{N}$ since $\Lambda({x_C=0_C})=0$ and there are thus no jumps only in $\mathbf{X}_A$ and $\mathbf{X}_B$ but not in $\mathbf{X}_C$. For each $n\in\N$ we further find that the random variables $Y_A(n)$ and $Y_B(n)$ are conditionally independent given $\mathbf{X}_C$. Indeed, knowledge of $\mathbf{X}_C$ also implies knowledge of $Y_C(n)$, and by the assumption $A\perp B\mid C\,[\Lambda]$ we obtain $Y_A(n) \Perp Y_B(n)\mid Y_C(n)$. Now we apply the ideas from the proof of Proposition~\ref{prop:add_gaussian} to conclude that $\mathbf{X}_A\Perp \mathbf{X}_B\mid\mathbf{X}_C$.

Now we no longer assume that $\Lambda$ is finite. Instead we assume that $\Lambda({x_c=0})=0$ for some $c\in C$. We may view $\mathbf{X}$ as an almost sure limit of a sequence $(\mathbf{X}\ph{(n)})$ of L\'evy processes, where $(\mathbf{X}\ph{(n)})$ has \emph{finite} L\'evy measure given by $\Lambda\ph{(n)}=\Lambda(\cdot\cap\Set{1/n<\abs{x_c}})$. Since $A\perp B\mid C\,[\Lambda]$ we also have $A\perp B\mid C\,[\Lambda\ph{(n)}]$ for every $n\in\N$. Then, noting that $\Lambda({x_C=0_C})\leq \Lambda({x_c=0})=0$, by the previous paragraph, we get that $\mathbf{X}\ph{(n)}_A\Perp \mathbf{X}\ph{(n)}_B\mid\mathbf{X}_C$ since conditioning on $\mathbf{X}_C$ or $\mathbf{X}\ph{(n)}_C$ has the same effect on $\mathbf{X}\ph{(n)}$. Using Lemma~\ref{lem:cond_indep_limit} the conditional independence $\mathbf{X}_A\Perp \mathbf{X}_B\mid\mathbf{X}_C$ follows.

Finally we can consider a general L\'evy measure $\Lambda$. We proceed with induction in $k=\abs{C}$. From Lemma~\ref{lem:independence_implication} we know that the result holds for $k=0$. Now, assume that it holds when $C$ has $k-1$ elements, and fix some $c\in C$. We write $\mathbf{X}=\mathbf{X}'+\mathbf{X}''$, where $\mathbf{X}'$ and $\mathbf{X}''$ are independent L\'evy processes with L\'evy measures $\Lambda'=\Lambda(\cdot\cap\Set{x_c=0})$ and $\Lambda''=\Lambda(\cdot\cap\Set{x_c\neq0})$. We have $A\perp B\mid C\,[\Lambda']$ and since $\Lambda'$ is concentrated on $\Set{x_c=0}$ it follows that $A\perp B\mid C\setminus\Set{c}\,[\Lambda'_{V\setminus\Set{c}}]$. Importantly, $\Lambda'_{V\setminus\Set{c}}$ satisfies \ref{eq:explosion_1}, so by the induction hypothesis we have that $\mathbf{X}'_A\Perp\mathbf{X}'_B\mid\mathbf{X}'_{C\setminus\Set{c}}$. We note that this still holds if we condition on all of $\mathbf{X}'_C$ instead because $\mathbf{X}'_c$ is deterministic. We further have $A\perp B\mid C\,[\Lambda'']$, and since $\Lambda''({x_c=0})=0$ we have $\mathbf{X}''_A\Perp\mathbf{X}''_B\mid\mathbf{X}''_C$ by the paragraph above. To conclude we make use of the fact that conditioning on $\mathbf{X}_C$ is the same as conditioning on the pair $(\mathbf{X}'_C,\mathbf{X}''_C)$ as discussed in Appendix~\ref{sec:decomp_cond}.
By combining this with the conditional independence statements for the two terms we arrive at the conditional independence $\mathbf{X}_A\Perp \mathbf{X}_B\mid\mathbf{X}_C$. For the last step we again use the ideas from the proof of Proposition~\ref{prop:add_gaussian}.
\end{proof}

\subsection{Proof of Proposition~\ref{prop_copula}}

\begin{proof}
	First, we provide a proof for the case when $\Lambda$ is supported by the positive orthant.
	For each $i\in V$ consider the transformation $T_i(x)=U^{-1}_i(U'_i(x))$, 
	$x\in [0,\infty]$, where 
	\[U^{-1}_i(a)=\inf\{b\geq 0 \mid U_i(b)=a\}\] is the left-continuous inverse of $U_i$ which is well-defined since $U_i$ is continuous and explodes at $0^+$. Thus, for $x\geq 0$,
	\begin{align}		
		\notag\Lambda(y: y_i \geq T_i(x_i) \text{ for } i\in V) &= \Lambda^*(y: y_i \geq 1/U'_i(x_i) \text{ for } i\in V) \\ 
		\label{eq:UUprime}&= \Lambda'(y: y_i \geq x_i \text{ for } i\in V)
	\end{align}

	Furthermore, $T_i$ enjoys the following properties:
	\begin{itemize}
	\item[(a)] $T_i$ is monotonically increasing and is strictly monotone on the support of $\Lambda'_i$;
	\item[(b)] $T_i(0)=0$;
	\item[(c)] the range of $T_i$ is the support of $\Lambda_i$ restricted to $\R_+$.
	\end{itemize}
	
	For fixed $i\in V$ and $\epsilon_i>0$ define $\epsilon'_i=\inf\{\varepsilon>0: T_i(\varepsilon)\geq \epsilon_i\}$. Note that $T_i(\epsilon'_i)\leq \epsilon_i$ and the inequality is strict unless $U_i$ is constant preceeding $\epsilon_i$.
	With this definition we have
	\begin{align*}
		\Lambda(y: y_i \geq \epsilon_i, y_j \geq 0 \text{ for } j\in V\setminus\{i\}) &= \Lambda(y: y_i \geq T_i(\epsilon'_i), y_j \geq 0 \text{ for } j\in V\setminus\{i\}) \\ 
		&= \Lambda'(y: y_i \geq \epsilon'_i, y_j \geq 0 \text{ for } j\in V\setminus\{i\})  =:k. 
	\end{align*}
	The first equality follows from the absence of mass of $\Lambda_i$ in $[\epsilon_i, T_i(\epsilon'_i))$, and the second equality follows from~\eqref{eq:UUprime} and property~(b).
	
	Assume $k>0$ and let a random vector $X$ be distributed according to the law $\mathbb P(X \in A) = \Lambda(y: y \in A)/k$ for all Borel $A\subset \{x\geq (0,\ldots, \epsilon_i,\ldots, 0) \}$.
	Define the law of $X'$ analogously by taking $\Lambda'$ and $\epsilon'_i$ instead of $\Lambda$ and $\epsilon$.
	Now we have 
	\begin{align}&\p(X_1\geq T_1(x_1),\dots, X_d\geq T_d(x_d))=		
		\Lambda(y: y_i \geq T_i(x_i) \text{ for } i\in V)/k
		=\Lambda'(y: y_i \geq x_i \text{ for } i\in V)/k\label{eq:XXprime}\\
	&=\p(X'_1\geq x_1,\dots,X'_d\geq x_d)=\p(T_1(X'_1)\geq T_1(x_1),\dots,T_d(X'_d)\geq T_d(x_d)),\notag
	\end{align}
	where in the last equality we used property~(a).
	By property (c) we find that $X$ and $(T_i(X'_i))_i$ have the same distribution.
	
	Suppose that $A \indep B \mid C\; [\Lambda']$. Then for any $i\in V$ and $\epsilon>0$ we take random vectors $X$ and $X'$ as above. We have $X'_A \Perp X'_A \mid X'_C$ by definition of $A \indep B \mid C\; [\Lambda']$. But $T_i$ are injective on the support of the law of $X'$ and so $X_A \Perp X_A \mid X_C$. Now $A \indep B \mid C\; [\Lambda]$ follows from~\citet[Theorem~4.1]{eng_iva_kir}. This proves our result in the case of $\Lambda$ being supported by the positive orthant.
	
	The general case follows along the same steps, but is more cumbersome in terms of notation.
	The main difference is that the laws of vectors $X$ and $X'$ are defined on the sets $\{x\in \R^d: |x_i|\geq \epsilon_i\}$ and $\{x\in \R^d: x_i\geq \epsilon'_i\text{ or }x_i\leq -\widehat\epsilon'_i\}$, respectively, where $\widehat\epsilon'_i$ is defined analogously to $\epsilon'_i$ but using the transformation $\widehat T_i$ arising from the $i$th marginals on the negative half-line. To do so we simply combine tail functions for all quadrants. 
	Consider the analogues of~\eqref{eq:XXprime} for each quadrant, and establish distributional equality of $X$ and the transformed $X'$ (the sets used in all tail functions are measure determining for the considered domain). The rest of the proof is identical. 
	\end{proof}

\subsection{Proof of Proposition~\ref{prop_cop_same}}

\begin{proof}

	For the first assertion, note that if $\Lambda_D^0(\Set{x_v\neq0})>0$ for some $D\subseteq V$ and $v\in V$, then there is $\varepsilon>0$ with $\Lambda_D^0(\Set{\abs{x_v}\geq \varepsilon})>0$. Homogeneity of $\Lambda^*$ then implies that
\begin{align*}
	\Lambda_D^0(\Set{x_v\neq0})&= \lim_{u \to 0}\Lambda_D(\Set{\abs{x_v}> u\varepsilon, x_{V\setminus D}=0_{V\setminus D}}) \\
	& =  \lim_{u \to 0}\Lambda_D^*(x: \pm x_v >  \{c^\pm_v (u\varepsilon)^{-\alpha_v}\}^{-1}, x_{V\setminus D}=0_{V\setminus D}) \\	
	& =  \lim_{u \to 0}  u^{-\alpha_v} \Lambda_D^*(x: \pm x_v >  \{c^\pm_v \varepsilon^{-\alpha_v}\}^{-1}, x_{V\setminus D}=0_{V\setminus D}) \\	
	&= \lim_{u \to 0} u^{-1} \Lambda_D^0(\Set{\abs{x_v} > \varepsilon}) = \infty.
\end{align*}

For the time-invariance of the copula it is sufficient to  show that~\eqref{ss_hetero}, because the copula is preserved under strictly increasing transformations of the marginals. From~\eqref{levy_std1} and homogeneity of $\Lambda^*$ observe that 
\begin{align*}t\Lambda(y:\pm y_1> x_1,\dots,\pm y_d>x_d) &= \Lambda^*\left(y: \pm y_1 > (c^\pm_1tx^{-\alpha_1})^{-1},\dots,\pm y_d > (c^\pm_dtx^{-\alpha_d})^{-1}\right)\\
&=\Lambda(y:\pm y_1> t^{-1/\alpha_1}x_1,\dots,\pm y_d>t^{-1/\alpha_d}x_d).\end{align*}
Denote the exponent of the characteristic function in the L\'evy--Khintchine formula by $t \psi(u) = \log \e{e^{i\inner{u}{X(t)}}}$, for $t\geq 0$. Perform the change of variables $x'_j=t^{-1/\alpha_j}x_j$ to get 
\[t\int_{\R^d}e^{i\inner{u}{x}}-1-i\inner{u}{x}\ind{\norm{x}\leq 1}\,\Lambda(\dd x)=
\int_{\R^d}e^{i\inner{u'}{x'}}-1-i\inner{u'}{x'}\ind{\norm{x'}\leq 1}\,\Lambda(\dd x') + i\inner{u'}{\gamma'(t)},\]
where $u'_j=t^{1/\alpha_j}u_j$ and $\gamma'(t)=\int_{\R^d}x'\left(\ind{\norm{x'}\leq 1}-\ind{\norm{x}\leq 1}\right)\,\Lambda(\dd x')$.
Thus we have
\[\e{e^{i\inner{u}{X(t)}}}=e^{\psi(u)t}=e^{\psi(u')+ i\inner{u'}{\gamma'(t)}}=\e{e^{i\sum_j u_jt^{1/\alpha_j}(X_j(1)+\gamma'_j(t))}},\]
and so the claimed representation is proven with $\gamma_j(t)=t^{1/\alpha_j}\gamma'_j(t)$.
\end{proof}

\subsection{Proof of Lemma~\ref{lem_chi}}\label{proof:lem_chi}

\begin{proof}
	We first note that for some $p\in [0,1]$ and some $q>0$ we have that 
	\[ |2p -1| > q \quad \Leftrightarrow \quad p < (1-q)/2 \text{ or } p > (1+ q)/2 .\]
	Therefore, the probability on the right-hand side of~\eqref{chi_rep} measures the event that the $i$th and $j$th components, on copula scale, are simultaneously either below $(1-q)/2$ or above $(1+ q)/2$.

	The approximation of the standardized L\'evy measure $\Lambda^*$ by the copulas of the process $X(t)$ at small times $t$ in~\eqref{cop_approx} yields
	\begin{align*}
		\Lambda^*(x: x_i >u_i, x_j >u_j) = \lim_{t\to 0} t^{-1} \mathbb P\left\{F_{ti}(X_i(t)) > 1 - t/u_i, F_{tj}(X_j(t)) > 1 - t/u_j \right\}, \quad u_i,u_j >0,
	\end{align*}
	and similarly for the other orthants. By Proposition~\ref{prop_cop_same}, the copulas of $X(t)$ coincide for all $t>0$ for a L\'evy process $\mathbf X$ in the class of heterogenous stable models that we assume. Therefore,
	\begin{align*}
		\Lambda^*(x: x_i > 2,x_j > 2)= \lim_{q\to 1} (1-q)^{-1} \mathbb P\left\{F_{1i}(X_1(1)) > (1 + q)/2, F_{1j}(X_d(1)) > (1 + q)/2 \right\},
	\end{align*}
	and
	\begin{align*}
		\Lambda^*(x: x_i < -2,x_j > 2)= \lim_{q\to 1} (1-q)^{-1} \mathbb P\left\{F_{1i}(X_1(1)) < (1-q)/2, F_{1j}(X_d(1)) > (1 + q)/2 \right\};
	\end{align*}
	and similarly for the cases $\{x_i \leq -2,x_j\leq -2\}$ and $\{x_i \geq 2,x_j\leq -2\}$. This shows that 
	\begin{align*}
		\lim_{q \to 1}  \frac{\mathbb P\{ |2F_{1i}(X_i(1)) - 1| > q, |2F_{1j}(X_j(1)) -1 | > q \}}{1-q}   &=   \Lambda^*(y: |y_i| > 2, |y_j| > 2)\\
		& =  \Lambda^*(y: |y_i| > 1, |y_j| > 1)/2\\
		& = \chi_{ij}, 
	\end{align*}
	where the second equation follows from the homogeneity of $\Lambda^*$.
\end{proof}

\subsection{Proof of Proposition~\ref{prop:chi_ineq}}\label{proof:prop_chi_ineq}

\begin{proof}
	We start by defining a positive version $\bar \Lambda^*$ of the normalized L\'evy measure $\Lambda^*$ as
	\[\bar \Lambda^*(x: x_1\geq u_1^{-1},\dots,x_d\geq u_d^{-1}) = \Lambda^*(x: |x_1| \geq u_1^{-1},\dots,|x_d|\geq u_d^{-1}), \quad u >0,\]
	and $\bar \Lambda^*$ has only mass on the positive orthant.
	It is easy to see that under the symmetry assumption~\eqref{levy_std1}, the new measure $\bar \Lambda^*$ is still a tree graphical model on $T$. We can now use existing results from extreme value theory literature. In particular, $\bar \Lambda^*$ is a valid exponent measure of a max-stable distribution. The inequality~\eqref{eq:chi_ineq} then follows directly from \citet[Proposition 5]{engelke-volgushev}. The same proposition also implies the identification of the underlying tree structure through the minimum spanning tree problem, given that the inequalities are strict. 
\end{proof}

\subsection{Proof of Proposition~\ref{prop:m_consistency}}
\begin{proof}
	By the same arguments as in the proof of Theorem~\ref{thm:recovery}, the estimators $\widehat \chi_{ij}^{++}$ and $\widehat \chi_{ij}^{--}$ are consistent for the true counterparts. An application of the continuous mapping theorem yields the consistency of $\widehat m_{ij}$.
	
\end{proof}

\subsection{Statement and proof of Proposition~\ref{prop_hf}}\label{proof:prop_hf}

\begin{proposition}\label{prop_hf}
	Under the assumptions of Theorem~\ref{thm:recovery} on the L\'evy process $\mathbf X$, the high-frequency estimator satisfies for all $i,j\in V$
	\[ \widehat\chi^{\text{HF}}_{ij} \stackrel{d}{=} \widehat\chi_{ij}.\]
\end{proposition}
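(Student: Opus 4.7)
The plan is to exploit two ingredients: (i) the heterogenous stable representation from Proposition~\ref{prop_cop_same}, which writes $X(t)$ as the componentwise strictly increasing image of $X(1)$, and (ii) the fact that the statistic $\widehat\chi_{ij}$ depends on the data only through the componentwise ranks and is therefore invariant under strictly monotone marginal transformations. Combining these should give the required distributional equality on the nose.

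First, I would observe that $\Delta(1,n), \dots, \Delta(n,n)$ are i.i.d.\ copies of $X(1/n)$, while $\Delta(1),\dots,\Delta(n)$ are i.i.d.\ copies of $X(1)$. By Proposition~\ref{prop_cop_same},
\[
X(1/n) \;\deq\; \bigl( (1/n)^{1/\alpha_i} X_i(1) + \gamma_i(1/n)\bigr)_{i\in V}.
\]
Since the transformation $g_{n,i}(x) = (1/n)^{1/\alpha_i} x + \gamma_i(1/n)$ is deterministic and applied componentwise, and since the $n$ high-frequency increments are independent, I can lift this to the joint distribution of the whole sample: if $Y^{(1)},\dots,Y^{(n)}$ are i.i.d.\ copies of $X(1)$, then
\[
\bigl(\Delta(t,n)\bigr)_{t=1}^n \;\deq\; \bigl( ( g_{n,i}(Y_i^{(t)}) )_{i\in V} \bigr)_{t=1}^n .
\]

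Second, I would verify the rank-invariance of the estimator. Since $\widehat F_i(\Delta_i(t)) = R_i(t)/(n+1)$, where $R_i(t)$ is the rank of $\Delta_i(t)$ among $\Delta_i(1),\dots,\Delta_i(n)$ (ties occur with probability zero under Proposition~\ref{prop_cop_same} applied to the marginals, which are continuous), and since ranks are preserved under any strictly increasing transformation of a single coordinate, the statistic $\widehat\chi_{ij}$ only depends on the rank vectors $(R_i(t),R_j(t))_{t=1}^n$. Applying the maps $g_{n,i}$ and $g_{n,j}$ componentwise leaves these rank vectors unchanged.

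Finally, combining these two steps: the rank vector of $(\Delta(t,n))_{t=1}^n$ has the same distribution as the rank vector of $(g_{n,i}(Y^{(t)}_i))_{t,i}$, which coincides with the rank vector of $(Y^{(t)})_{t=1}^n \deq (\Delta(t))_{t=1}^n$. Since $\widehat\chi_{ij}^{\text{HF}}$ and $\widehat\chi_{ij}$ are deterministic functions of these rank vectors with the same threshold $1-k/n$, the conclusion $\widehat\chi_{ij}^{\text{HF}} \deq \widehat\chi_{ij}$ follows. The only delicate point is to make precise that rank-invariance works coordinate by coordinate even though the drift $\gamma_i(1/n)$ may differ between margins; this is immediate because each $g_{n,i}$ is strictly increasing on $\R$ and acts only on the $i$th coordinate.
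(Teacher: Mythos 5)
Your proof is correct and follows essentially the same route as the paper: both arguments reduce the claim to the self-similarity representation~\eqref{ss_hetero} of Proposition~\ref{prop_cop_same} and the observation that the estimator depends on the data only through order comparisons, which are preserved under the strictly increasing componentwise maps $x\mapsto n^{-1/\alpha_i}x+\gamma_i(1/n)$. Your explicit framing in terms of rank invariance is just a slightly more verbose packaging of the paper's joint distributional identity for the comparison indicators; note also that the remark about ties is not needed, since a strictly increasing map preserves the relation $\leq$ exactly even in the presence of ties.
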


\begin{proof}
The high-frequency estimator can be written as 
\begin{equation}\label{eq:chi_hat_hf}
	\widehat\chi^{\text{HF}}_{ij}= \frac{1}{k}\sum_{t=1}^n\indnew{\Set{|2\widehat{F}_{ni}(\Delta_i(t,n)) - 1| > 1 - k/n, |2\widehat{F}_{nj}(\Delta_j(t,n))-1|>1 - k/n}},
\end{equation}
where for any $i=1,\dots, d$ and $t=1,\dots, n$
\begin{equation*}
	\widehat{F}_{ni}(\Delta_i(t,n))=\frac{1}{n+1}\sum_{s=1}^n \indnew{\Set{\Delta_i(s,n)\leq \Delta_i(t,n)}}.
	\end{equation*}
It suffices to observe that 
\begin{align*}
	\indnew{\Set{\Delta_i(s,n)\leq \Delta_i(t,n)}} &\stackrel{d}{=} \indnew{\Set{ X_{is}(1/n) \leq X_{it}(1/n)}}\\
	& \stackrel{d}{=} \indnew{\Set{ n^{-1/\alpha_i} X_{is}(1) + \gamma_i(1/n) \leq n^{-1/\alpha_i} X_{it}(1) + \gamma_i(1/n)}} \\
	& \stackrel{d}{=} \indnew{\Set{\Delta_i(s)\leq \Delta_i(t)}},
\end{align*}
jointly in $i=1,\dots, d$ and $t=1,\dots, n$, because of the self-similarity property~\eqref{ss_hetero}, and thus
\begin{align*}
	\widehat{F}_{ni}(\Delta_i(t,n))  &\stackrel{d}{=} \widehat{F}_{i}(\Delta_i(t)).
\end{align*}
This yields that $\widehat\chi^{\text{HF}}_{ij} \stackrel{d}{=} \widehat\chi_{ij}$.

\end{proof}

\subsection{Proof of Theorem~\ref{thm:recovery}}\label{proof:thm:recovery}

\begin{proof}
	We first establish the consistency of the estimator
	$\widehat \chi_{ij}$, $i,j\in V$, and note that it can rewritten as 
	\begin{align}\label{chi_split}
		 \widehat \chi_{ij} = \widehat \chi^{++}_{ij} + \widehat \chi^{+-}_{ij} + \widehat \chi^{-+}_{ij} + \widehat \chi^{--}_{ij}, 
	\end{align}
	where each of the four estimators counts the points in one of the corners. For instance, we have
	\begin{equation*}
		\widehat\chi^{++}_{ij}= \frac{1}{k}\sum_{t=1}^n\indnew{\Set{\widehat{F}_i(\Delta_i(t)) > 1 - k/(2n), \widehat{F}_j(\Delta_j(t)) >1 - k/(2n)}},
	\end{equation*}
	and similarly for the others.	
	Since this estimator has the same form as the commonly used extremal correlation estimator in extreme value theory, we can then rely existing results. In view of Lemma~\ref{lem_chi}, it suffices to show that $d$-dimensional random vector $X(1)$ is multivariate regularly varying.

	The approximation of the standardized L\'evy measure $\Lambda^*$ by the copulas of the process $X(t)$ at small times $t$ in~\eqref{cop_approx} yields
	\begin{align*}
		\Lambda^*(x: x_1\geq u_1^{-1},\dots,x_d\geq u_d^{-1}) = \lim_{t\to 0} t^{-1} \mathbb P\left\{F_{t1}(X_1(t)) > 1 - tu_1, \dots, F_{td}(X_d(t)) > 1 - tu_d \right\}.
	\end{align*}
	By Proposition~\ref{prop_cop_same}, the copulas $C_t\equiv C_1$ coincide for all $t>0$ for a L\'evy process $\mathbf X$ in the class of heterogenous stable models that we assume. Therefore,
	\begin{align*}
		\Lambda^*(x: x_1\geq u_1^{-1},\dots,x_d\geq u_d^{-1})= \lim_{t\to 0} t^{-1} \mathbb P\left\{F_{11}(X_1(1)) > 1 - tu_1, \dots, F_{1d}(X_d(1)) > 1 - tu_d \right\}.
	\end{align*}
	The left-hand side is related to the $\ell$-function \citep[e.g.,][Chapter 6]{deh2006a}, and, indeed, this convergence is nothing else than the well-known assumption of multivariate regular variation in extreme value theory \citep[e.g.,][Theorem 2]{segers2020}.

	To summarize, we have established that $X(1)$ is a multivariate regularly varying random vector and $\widehat \chi_{ij}^{++}$ is has exactly the same form as the empirical extremal correlation estimator from extreme value theory. Under the additional standard assumption that $k\to\infty$ and $k/n\to 0$, as $n\to \infty$, the consistency of $\widehat \chi^{++}_{ij}$ follows, for instance, from \citet[Section 4.2]{engelke-volgushev}, that is, for any $\varepsilon >0$, we have
	\[ \lim_{n\to\infty} \mathbb P( | \widehat \chi^{++}_{ij} - \chi^{++}_{ij} |>\varepsilon) = 0,\]
	where $\chi^{++}_{ij} = \Lambda^*(y: y_i > 2, y_j > 2)$. Consistency of the other three estimators in~\eqref{chi_split}
	follows analogously, and thus	
	\[ \lim_{n\to\infty} \mathbb P( | \widehat \chi_{ij} - \chi_{ij} |>\varepsilon) = 0.\]
	Consistent tree recovery now follows immediately along the lines of the proof of \citet[Theorem 2]{engelke-volgushev}, who showed this result for extremal tree models.
\end{proof}

\section{Small-jump approximation}\label{small_jump_approx}

In this section we assume that the L\'evy process $\mathbf{X}$ is $\alpha$-stable with $\alpha\in(0,2)$. In Section~\ref{sec:simu} we discussed approximate simulation where one simulates a compound Poisson process containing the large jumps. To improve the convergence rate, the small jumps can be approximated with a suitable Brownian motion; see Remark~\ref{rem:small_jumps}. Previously a jump was considered `large' if its sup norm was above some threshold $\epsilon$. Instead we will now fix a non-empty subset $U\subseteq V$ and say that a jump is large if it exceeds $\epsilon$ in absolute value in at least one component of $U$. Note that $U=V$ corresponds to the previous notion of large jumps.

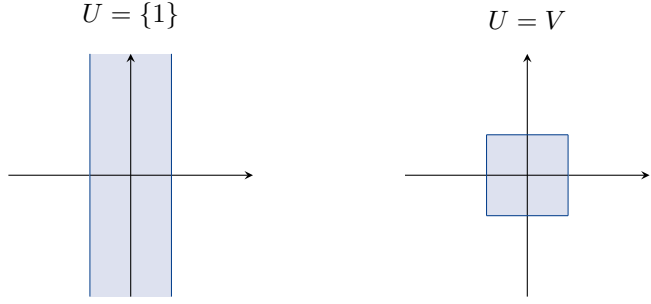
\begin{figure}[tb]
\begin{tikzpicture}
\begin{axis}[name=plot1,axis lines=center, width=6.2cm,height=4.8cm, ymin=-3,ymax=3,xmin=-3,xmax=3,ticks=none, title={$U=\Set{1}$},unit vector ratio*=1 1 1]
\draw[white,name path=A] (axis cs:-1,-3) to (axis cs:-1,3);
\draw[white,name path=B] (axis cs:1,-3) to (axis cs:1,3);
\addplot[nodecolor] fill between[of=A and B];
\draw[black] (axis cs:-1,0) to (axis cs:1,0);
\draw[black,-stealth] (axis cs:0,-3) to (axis cs:0,3);
\draw[AUblue] (axis cs:-1,-3) to (axis cs:-1,3);
\draw[AUblue] (axis cs:1,-3) to (axis cs:1,3);
\end{axis}

\begin{axis}[at={($ (2cm,0cm) + (plot1.south east) $)},axis lines=center, width=6.2cm,height=4.8cm, ymin=-3,ymax=3,xmin=-3,xmax=3,ticks=none,title={$U=V$},unit vector ratio*=1 1 1]
\draw[white,name path=A] (axis cs:-1,-1) to (axis cs:-1,1);
\draw[white,name path=B] (axis cs:1,-1) to (axis cs:1,1);
\addplot[nodecolor] fill between[of=A and B];
\draw[black] (axis cs:-1,0) to (axis cs:1,0);
\draw[black] (axis cs:0,-1) to (axis cs:0,1);
\draw[AUblue] (axis cs:-1,-1) to (axis cs:-1,1);
\draw[AUblue] (axis cs:1,-1) to (axis cs:1,1);
\draw[AUblue] (axis cs:-1,-1) to (axis cs:1,-1);
\draw[AUblue] (axis cs:-1,1) to (axis cs:1,1);
\end{axis}
\end{tikzpicture}
\caption{Jumps in the blue area are considered small.}
\label{fig:small_large}
\end{figure}

We may represent $\mathbf{X}$ as an independent sum $\mathbf{X}=\mathbf{X}\ph{(\epsilon)}+\mathbf{W}\ph{(\epsilon)}+\mathbf{\gamma}\ph{(\epsilon)}$, where $\mathbf{\gamma}\ph{(\epsilon)}$ is a linear drift, $\mathbf{W}\ph{(\epsilon)}$ is a martingale and a L\'evy process with L\'evy measure given by $\Lambda$ restricted to the set $\Set{\abs{x_u}\leq \epsilon\,\forall u\in U}$, and $\mathbf{X}\ph{(\epsilon)}$ is a compound Poisson process with jumps exceeding~$\epsilon$ in absolute value in at least one component of~$U$.
 
In the following we assume that
\begin{equation}\label{eq:L2}
\int_{\Set{\abs{x_u}\leq 1\,\forall u\in U}} x_i^2\,\Lambda(\dd x)<\infty\For*{for all $i\notin U$,}
\end{equation} 
and note that this property readily extends to all $i\in V$ from the properties of $\Lambda$. 
It is important to note that this assumption implies that
\begin{equation*}
\Lambda(\Set{x_U=0_U})=0,
\end{equation*}
because the L\'evy measure $\Lambda^0_{V\setminus U}$ is $-\a$-homogeneous and integrates $x_{V\setminus U}\mapsto x_i^2$ for all $i\notin U$.
Now we may define the matrix 
\begin{equation}\label{eq:Sigma}
\Sigma = \int_{\Set{\abs{x_u}\leq 1\,\forall u\in U}} xx^\top\,\Lambda(\dd x),
\end{equation}
where existence of the integral on the right-hand side is ensured by the assumption above.

\begin{lemma}\label{lem:small_jump_conv}
Assume \eqref{eq:L2}. Then with $a_\ep=\ep^{1-\a/2}$ it holds that
\begin{equation}\label{eq:limit}
\mathbf{W}\ph{(\epsilon)}/a_{\ep}\convd \mathbf{W}\For*{as $\epsilon\downarrow 0$,}
\end{equation}
where $\mathbf{W}$ is a drift-less Brownian motion with covariance matrix~$\Sigma$ defined in~\eqref{eq:Sigma}.
\end{lemma}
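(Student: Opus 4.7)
The plan is to reduce functional convergence in $\mathbb D$ to convergence of the one-dimensional distribution at time one: since both $\mathbf W\ph{(\epsilon)}/a_\epsilon$ and $\mathbf W$ are L\'evy processes, the standard continuity principle for L\'evy processes (see e.g.\ \citet[Ch.~15]{kallenberg3}) reduces the problem to showing $W\ph{(\epsilon)}(1)/a_\epsilon \convd N(0,\Sigma)$, and for this it suffices to establish pointwise convergence of characteristic exponents. Throughout I abbreviate $A_\epsilon = \Set{x\in\R^d : \abs{x_u}\leq \epsilon\ \forall u\in U}$.

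Since $\mathbf W\ph{(\epsilon)}$ is a square-integrable martingale with L\'evy measure $\Lambda|_{A_\epsilon}$, the L\'evy--Khintchine formula yields
\[
\log \Mean{e^{i\inner{u}{W\ph{(\epsilon)}(1)/a_\epsilon}}} = \int_{A_\epsilon} \left(e^{i\inner{u}{x/a_\epsilon}} - 1 - i\inner{u}{x/a_\epsilon}\right) \Lambda(\dd x).
\]
I would then exploit the $-\a$-homogeneity of $\Lambda$. Substituting $y = x/\epsilon$ sends the domain to $A_1$, produces a factor $\epsilon^{-\a}$, and replaces each $x/a_\epsilon$ by $t_\epsilon\, y$ with $t_\epsilon := \epsilon/a_\epsilon = \epsilon^{\a/2}$. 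The key identity $\epsilon^{-\a}\, t_\epsilon^{\,2} = 1$ lets me absorb the $\epsilon^{-\a}$ as $1/t_\epsilon^{\,2}$, collapsing the integral to
\[
\log \Mean{e^{i\inner{u}{W\ph{(\epsilon)}(1)/a_\epsilon}}} = \int_{A_1} \frac{e^{it_\epsilon \inner{u}{y}} - 1 - it_\epsilon \inner{u}{y}}{t_\epsilon^{\,2}}\, \Lambda(\dd y).
\]

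As $\epsilon\downarrow 0$, also $t_\epsilon\downarrow 0$, and a Taylor expansion shows the integrand converges pointwise to $-\inner{u}{y}^2/2$. The uniform bound $|e^{iw}-1-iw|\leq w^2/2$ provides the integrable envelope $\inner{u}{y}^2/2$: under~\eqref{eq:L2} together with the generic integrability of $\Lambda$ one obtains $\int_{A_1}\norm{y}^2\,\Lambda(\dd y) < \infty$, hence $\int_{A_1}\inner{u}{y}^2\,\Lambda(\dd y) = \inner{u}{\Sigma u}$ with $\Sigma$ as in~\eqref{eq:Sigma}. Dominated convergence then gives
\[
\log \Mean{e^{i\inner{u}{W\ph{(\epsilon)}(1)/a_\epsilon}}} \longrightarrow -\tfrac{1}{2}\inner{u}{\Sigma u},
\]
which is the characteristic exponent of $W(1) \sim N(0,\Sigma)$.

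I do not expect a genuinely hard step: the exponent in $a_\epsilon = \epsilon^{1-\a/2}$ is essentially forced by $\epsilon^{-\a}\, t_\epsilon^{\,2} = 1$, and the Lindeberg-type integrability needed for dominated convergence is exactly what~\eqref{eq:L2} supplies. The one point worth a brief side check is the extension of~\eqref{eq:L2} from $i\notin U$ to the full estimate $\int_{A_1}\norm{y}^2\,\Lambda(\dd y) < \infty$: for $i\in U$ one has $\abs{y_i}\leq 1$ on $A_1$, so $\int_{A_1} y_i^{\,2}\,\Lambda(\dd y)$ splits into a neighbourhood-of-zero piece controlled by $\int (1\wedge\norm{y}^2)\,\Lambda(\dd y)$ and a piece on $\Set{\norm{y}>1}\cap A_1$, which either lies in a region bounded away from $0$ (hence is $\Lambda$-finite) or is dominated by the $L^2$-bounds on the coordinates $y_j$, $j\notin U$.
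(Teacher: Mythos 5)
Your proof is correct. It shares the paper's overall architecture --- reduce the functional convergence to convergence of the time-one marginals of the L\'evy processes via a Kallenberg-type continuity theorem, then use the $-\alpha$-homogeneity of $\Lambda$ together with \eqref{eq:L2} --- but the verification of the marginal limit is carried out differently. The paper works at the level of characteristic triplets: it shows that the (untruncated) second-moment matrix of the prelimit L\'evy measure $\Lambda\ph{(\epsilon)}$ equals $\Sigma$ \emph{exactly} for every $\epsilon$, proves vague convergence $\Lambda\ph{(\epsilon)}\vaguely 0$ on $\overline{\R^d}\setminus\{0\}$, and then checks the two remaining conditions of \citet[Theorems~7.7 and~16.14]{kallenberg3} (convergence of the truncated second moments to $\Sigma$ and of the compensating drift to $0$), each by the same homogeneity-plus-dominated-convergence computation you use. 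You instead compute the characteristic exponent directly: after the substitution $x=\epsilon y$ the exponent collapses to $\int_{A_1} t_\epsilon^{-2}\bigl(e^{it_\epsilon\inner{u}{y}}-1-it_\epsilon\inner{u}{y}\bigr)\,\Lambda(\dd y)$, and a single application of dominated convergence with envelope $\inner{u}{y}^2/2$ yields $-\tfrac12\inner{u}{\Sigma u}$. This is a legitimate and arguably more economical route, since the one dominated-convergence step subsumes both the vague-convergence and the truncated-moment checks of the paper; the price is that you must invoke the version of the continuity theorem phrased in terms of characteristic functions rather than triplets, which is also available in \citet{kallenberg3}. Your side remark on extending \eqref{eq:L2} to $\int_{A_1}\norm{y}^2\,\Lambda(\dd y)<\infty$ addresses exactly the point the paper dispatches with the sentence ``this property readily extends to all $i\in V$ from the properties of $\Lambda$,'' and your argument for it (splitting off the neighbourhood of the origin and dominating the coordinates in $U$ on $\Set{\norm{y}>1}\cap A_1$ by the coordinates outside $U$ or by $\Lambda$-finiteness) is sound.
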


The next step is to establish conditions such that conditional independence for $\Lambda$ transfers to the classical conditional independence for the limiting Brownian motion $\mathbf{W}$ in Lemma~\ref{lem:small_jump_conv}.
We start with the simpler case of independence.

\begin{lemma}
Assume that $A \perp B\,[\Lambda]$ for a partition $A,B\subseteq V$, and that~\eqref{eq:L2} is satisfied. Then the Brownian motions $\mathbf{W}_A$ and $\mathbf{W}_B$ are independent.
\end{lemma}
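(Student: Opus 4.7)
The plan is to reduce independence of the two Brownian processes to vanishing of the off-diagonal block $\Sigma_{AB}$ of the covariance matrix defined in~\eqref{eq:Sigma}. Since $\mathbf{W}$ is a centered Brownian motion with covariance $\Sigma$, its finite-dimensional distributions are jointly Gaussian, and for any $i\in A$, $j\in B$, $s,t\geq 0$ we have $\mathrm{Cov}(W_i(t), W_j(s)) = \Sigma_{ij}\min(s,t)$. If we can show $\Sigma_{ij}=0$ for all such $i,j$, then the joint Gaussian vector $(W_A(t_1),\dots,W_A(t_k),W_B(s_1),\dots,W_B(s_m))$ has vanishing cross-covariances, hence its $A$-block and $B$-block are independent. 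Independence then extends from finite-dimensional marginals to the processes $\mathbf{W}_A$ and $\mathbf{W}_B$ by the standard argument for Gaussian processes.

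It therefore remains to check that $\Sigma_{ij}=0$ whenever $i\in A$, $j\in B$. Here I would exploit the standing assumption of this section that $\mathbf{X}$ is $\alpha$-stable with $\alpha\in(0,2)$, so that $\Lambda$ is $-\alpha$-homogeneous. As noted in Section~\ref{sec:stable}, this automatically yields~\ref{eq:explosion_1} and in particular~\ref{eq:explosion_0}. Consequently, equation~\eqref{Lambda_indep} applies, and the hypothesis $A\perp B\,[\Lambda]$ is equivalent to
\begin{equation*}
\Lambda\bigl(\{x_A\neq 0_A, \, x_B\neq 0_B\}\bigr) = 0.
\end{equation*}

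Now I would simply unpack the definition~\eqref{eq:Sigma}: for $i\in A$ and $j\in B$,
\begin{equation*}
\Sigma_{ij} = \int_{\{|x_u|\leq 1\,\forall u\in U\}} x_i x_j \,\Lambda(\dd x),
\end{equation*}
whose integrand is supported inside $\{x_i\neq 0,\,x_j\neq 0\}\subseteq\{x_A\neq 0_A,\,x_B\neq 0_B\}$, a $\Lambda$-null set by the previous display. Hence $\Sigma_{ij}=0$, and combined with the first paragraph this yields independence of $\mathbf{W}_A$ and $\mathbf{W}_B$.

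There is no genuine obstacle here: the proof is essentially the observation that both sides of the desired equivalence reduce to the same support statement on $\Lambda$. The one point worth being explicit about is the passage from uncorrelated to independent, which rests on the joint Gaussianity of $\mathbf{W}$ as a Brownian motion in $\R^d$ (so the $A$- and $B$-components form jointly Gaussian subprocesses, not merely marginally Gaussian ones), and this is immediate from the construction of $\mathbf{W}$ via the covariance matrix $\Sigma$.
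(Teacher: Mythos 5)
Your proposal is correct and follows essentially the same route as the paper: both reduce the claim to $\Sigma_{ij}=0$ for $i\in A$, $j\in B$, obtain this from the support characterization $\Lambda(\Set{x_A\neq 0_A, x_B\neq 0_B})=0$ of independence under~\ref{eq:explosion_0} (which holds by homogeneity of $\Lambda$), and conclude by joint Gaussianity of $\mathbf{W}$. You spell out the passage from vanishing cross-covariance to process-level independence in more detail than the paper, but the argument is identical in substance.
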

\begin{proof}
Recall that \ref{eq:explosion_1} is satisfied because the process is stable. According to \citet[Proposition~1]{eng_iva_kir} we have that $\Lambda(\Set{x_i\neq 0,x_j\neq 0})=0$ for $i\in A,j\in B$. Hence, $\Sigma_{ij}=0$ and the result follows.
\end{proof}

The case of conditional independence is much more subtle, and it requires several strong assumptions.

\begin{theorem}\label{thm:BM}
Assume that $U$ satisfies \eqref{eq:L2}, and that $A \perp B \mid C\,[\Lambda]$ for a partition $A,B,C\subseteq V$. Moreover, assume that
\begin{itemize}
\item $\abs{C}=1$.
\item $\Lambda(\Set{x_C<0})=0$.
\item $U$ is a subset of either $A\cup C$ or $B\cup C$.
\item $\Sigma$ defined in~\eqref{eq:Sigma} is invertible.
\end{itemize}
Then $(\Sigma)^{-1}_{ij}=0$ for all $i\neq j$, $i\in A,j\in B$, and hence $W_A(1) \Perp W_B(1)\mid W_C(1)$.
\end{theorem}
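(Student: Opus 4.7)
Writing $C=\Set{c}$, the plan is to establish the Schur complement identity $\Sigma_{ij}\Sigma_{cc}=\Sigma_{ic}\Sigma_{cj}$ for every $i\in A$ and $j\in B$, which, given invertibility of $\Sigma$, is equivalent to $(\Sigma^{-1})_{ij}=0$ and hence to $W_A(1)\Perp W_B(1)\mid W_c(1)$. Since $\Lambda(\Set{x_c<0})=0$, I decompose $\Sigma=\Sigma^{(0)}+\Sigma^{(+)}$ according to whether $x_c=0$ or $x_c>0$. On $\Set{x_c=0}$ the factor $x_c$ trivially makes $\Sigma^{(0)}_{cc}$, $\Sigma^{(0)}_{ic}$ and $\Sigma^{(0)}_{cj}$ vanish. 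For $\Sigma^{(0)}_{ij}$, the implication~\eqref{supp_impl} yields $A\perp B\,[\Lambda^0_{A\cup B}]$, and then~\eqref{Lambda_indep} applied to the $-\a$-homogeneous measure $\Lambda^0_{A\cup B}$ (which automatically satisfies~\ref{eq:explosion_0}) gives $\Lambda^0_{A\cup B}(\Set{x_A\neq 0_A,\,x_B\neq 0_B})=0$, so that $x_ix_j=0$ $\Lambda^0_{A\cup B}$-almost everywhere and hence $\Sigma^{(0)}_{ij}=0$ as well. Both sides of the Schur identity therefore vanish on the $\Sigma^{(0)}$-piece.

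On $\Set{x_c>0}$ I would employ the disintegration $\Lambda(dx_A,dx_B,dx_c)=K(r,dx_A\,dx_B)\,\Lambda_c(dr)$ with $r=x_c$, where $\Lambda_c$ is $\sigma$-finite with density proportional to $r^{-1-\a}$ and $K(r,\cdot)$ is a probability kernel. Applying $A\perp B\mid C\,[\Lambda]$ to product rectangles contained in $\Set{x_c>0}$ forces $K(r,\cdot)=K_A(r,\cdot)\otimes K_B(r,\cdot)$ for $\Lambda_c$-a.e.~$r$. Moreover, $-\a$-homogeneity of $\Lambda$ combined with the corresponding scaling of $\Lambda_c$ translates, after a standard change of variables, into the relation $K(cr,c\cdot)=K(r,\cdot)$, so that $K_B(r,\cdot)$ is the push-forward of $K_B(1,\cdot)$ under $y\mapsto ry$. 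In particular, with $\beta_j:=\int y_j\,K_B(1,dy_B)$ one obtains $\int x_j\,K_B(r,dx_B)=r\beta_j$. The constant $\beta_j$ is finite: the same scaling applied to the second moment $\int y_j^2\,K_B(1,dy_B)$, together with~\eqref{eq:L2} for $j\in B\subseteq V\setminus U$ and $\Sigma_{cc}>0$ from invertibility, controls this second moment, and Cauchy--Schwarz finishes the sub-step.

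The hypothesis $U\subseteq A\cup C$ is used crucially now: it ensures that the truncation $\Set{\abs{x_u}\leq 1,\,u\in U}$ only constrains $(x_A,r)$ and so factors out of the $x_B$-integration. Let $I\subseteq(0,\infty)$ be $(0,\infty)$ or $(0,1]$ according to whether $c\notin U$ or $c\in U$, and write $\psi(t)=\int y_i\ind{\Set{\abs{y_u}\leq t,\,u\in U\cap A}}\,K_A(1,dy_A)$ together with $\phi(t)$ for its indicator-only analogue. A direct computation using the disintegration, the factorization and the scaling of $K$ then yields
\begin{align*}
\Sigma^{(+)}_{ij}&=\beta_j\int_I r^2\psi(1/r)\,\Lambda_c(dr), & \Sigma^{(+)}_{ic}&=\int_I r^2\psi(1/r)\,\Lambda_c(dr), \\
\Sigma^{(+)}_{cj}&=\beta_j\int_I r^2\phi(1/r)\,\Lambda_c(dr), & \Sigma^{(+)}_{cc}&=\int_I r^2\phi(1/r)\,\Lambda_c(dr).
\end{align*}
The common factor $\beta_j$ cancels out, giving $\Sigma^{(+)}_{ij}\Sigma^{(+)}_{cc}=\Sigma^{(+)}_{ic}\Sigma^{(+)}_{cj}$. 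Combined with the vanishing $\Sigma^{(0)}$-contributions this proves the full Schur identity and completes the argument. The main obstacle I foresee is making the disintegration and its $-\a$-homogeneous scaling fully rigorous, and handling the slightly different integration domain $I$ when $c\in U$; beyond that, the computation is essentially bookkeeping.
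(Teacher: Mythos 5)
Your proposal is correct and follows essentially the same route as the paper's proof: the paper likewise observes that the jumps with $x_c=0$ contribute nothing to the relevant entries (via $A\perp B\,[\Lambda^0_{V\setminus C}]$), disintegrates $\Lambda$ over the $c$-coordinate using $-\alpha$-homogeneity (its Lemma~\ref{lem:mu_measures} on the kernel $\nu_{\Set{c}}$ and the vector $\xi\ph{(c,+)}$ is exactly your scaling relation for $K(r,\cdot)$), uses the kernel factorization implied by $A\perp B\mid C\,[\Lambda]$, and exploits that $U$ lies entirely on one side so the truncation does not couple the $A$- and $B$-integrations. The only cosmetic differences are that the paper takes $U\subseteq B\cup C$ rather than $A\cup C$ and concludes via an explicit block-matrix inversion lemma (Lemma~\ref{lem:matrix}) rather than your Schur-complement identity, which amount to the same linear algebra.
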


\subsection{Proof of Lemma~\ref{lem:small_jump_conv}}

Letting $\Lambda\ph{(\epsilon)}$ be the L\'evy measure of the pre-limit process $\mathbf{W}\ph{(\epsilon)} /a_\ep$ we find
\begin{align*}
\int xx^\top\,\Lambda\ph{(\epsilon)}(\dd x)&=a_\ep^{-2}\int_{\Set{\abs{x_u}\leq \ep\,\forall u \in U}} xx^\top\,\Lambda(\dd x) \\
&=\int_{\Set{\abs{x_u}\leq 1\,\forall u \in U}} xx^\top\,\Lambda(\ep\dd x)\ep^{\a} \\
&=\Sigma,
\end{align*}
where the last equality follows from homogeneity of $\Lambda$.

Next we show that $\Lambda\ph{(\epsilon)}\vaguely*0$ on $\overline{\R^d}\setminus\Set{0}$. That is, for every $h>0$ and $i\in V$ we have the convergence $\Lambda\ph{(\epsilon)}(\Set{\abs{x_i}\geq h})\to 0$ as $\ep\downarrow 0$. Using homogeneity again we find that
\begin{align*}
\Lambda\ph{(\epsilon)}(\Set{\abs{x_i}\geq h})&=\Lambda(\Set{\abs{x_u}\leq\epsilon\,\forall u\in U,\abs{x_i}\geq ha_\epsilon}) \\
&=\epsilon^{-\alpha}\Lambda(\Set{\abs{x_u}\leq1\,\forall u\in U,\abs{x_i}\geq h\epsilon^{-\alpha/2}}).
\end{align*}
On the latter set we have $\epsilon^{-\alpha}\leq h^{-2}x_i^2$ and so we find that
\begin{equation*}
\Lambda\ph{(\epsilon)}(\Set{\abs{x_i}\geq h})\leq h^{-2}\int_{\abs{x_u}\leq 1\,\forall u\in U, \abs{x_i}\geq h\ep^{-\a/2}}x_i^2\,\Lambda(\dd x)\to0,
\end{equation*}
where we used dominated convergence with dominating function $x\mapsto x_i^2\ind{\Set{\abs{x_u}\leq1\,\forall u\in U}}$.

According to~\citet[Theorems~7.7 and~16.14]{kallenberg3} it is left to show
\begin{equation*}
\int_{\Set{\norm{x}\leq 1}} xx^\top\,\Lambda\ph{(\epsilon)}(\dd x)\to\Sigma
\qtq{and}
\gamma_\ep\to 0,
\end{equation*}
where $\gamma_\ep$ is obtained from the martingale requirement, i.e.\ $\gamma_\ep +\int_{\Set{\norm{x}>1}} x\,\Lambda\ph{(\epsilon)}(\dd x)=0$. Both are implied by
\begin{equation*}
\int_{\Set{\norm{x}>1}} x_i^2\,\Lambda\ph{(\epsilon)}(\dd x)=\int_{\Set{\abs{x_u}\leq 1\,\forall u \in U, \norm{x}\geq \ep^{-\a/2}}} x_i^2\,\Lambda(\dd x)\to 0,
\end{equation*}
derived using the same calculations as above. For the second we also employ the vague convergence to the zero measure.

\subsection{Proof of Theorem~\ref{thm:BM}}

We start by proving two auxiliary result.
Since $\mathbf{X}$ is a stable L\'evy process, we can use the homogeneity of $\Lambda$ to derive the existence of certain random vectors which will become essential in describing the structure of the jumps.
For a non-empty subset $C\subseteq V$ there is a $\Lambda_C$-unique probability kernel $\nu_C\colon\E^C\times\B(\R^{V\setminus C})\to[0,1]$ such that for any $R\in\mathcal{R}(\Lambda)$ with $0_C\notin R_C$ we have that
\begin{equation*}
\Lambda(R)=\int_{R_C}\nu_C(x_C,R_{V\setminus C})\,\Lambda_C(\dd x_C);
\end{equation*}
see \citet[Lemma~4.3]{eng_iva_kir}.
Furthermore, for any $c\in C$ and $\epsilon>0$ with $\Lambda(R_{c,\epsilon})>0$ there is the identity
\begin{equation}\label{eq:kernel_conditioning}
\nu_C(x_C,R_{V\setminus C})=\p_{c,\epsilon}(Y_{V\setminus C}\in R_{V\setminus C}\mid Y_C=x_C)
\end{equation}
for $\Lambda_C$-almost all $x_C$ with $\abs{x_c}\geq\epsilon$.
We will focus on the case where $C$ contains just one element $c$. 

\begin{lemma}\label{lem:mu_measures}
For any $c\in V$ there exist a $d$-dimensional random vector $\xi\ph{(c,+)}$ such that $\xi\ph{(c,+)}_c= 1$ almost surely, and for $\Lambda_c$-almost all $h >0$
\begin{equation*}
\nu_{\Set{c}}(h,\cdot)=\Prob{ h\xi\ph{(c,+)}_{V\setminus\Set{c}}\in\cdot\,}\For*{for $h>0$.}
\end{equation*}
\end{lemma}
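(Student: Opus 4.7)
The plan is to exploit the $-\alpha$-homogeneity of $\Lambda$ via a polar-type decomposition that decouples scale from direction. Restricting to the half-space $\Set{x_c>0}$ (the statement is trivial if $\Lambda_c$ carries no mass on $(0,\infty)$), I consider the bijection
\[
\phi\colon\Set{x\in\R^d:x_c>0}\to(0,\infty)\times\R^{V\setminus\Set{c}},\qquad \phi(x)=\bigl(x_c,\,x_{V\setminus\Set{c}}/x_c\bigr).
\]
Since the ratios $x_i/x_c$ are scale-invariant, the rescaling $x\mapsto tx$ corresponds under $\phi$ to the partial rescaling $(h,y)\mapsto(th,y)$. Consequently, the pushforward $\tilde\Lambda:=\phi_*(\Lambda|_{\Set{x_c>0}})$ is $-\alpha$-homogeneous in its first coordinate alone.

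Next I would factor $\tilde\Lambda$ as a product. For any Borel set $A\subseteq\R^{V\setminus\Set{c}}$, the partial scaling makes $m_A((a,b]):=\tilde\Lambda((a,b]\times A)$ a $-\alpha$-homogeneous measure on $(0,\infty)$, and such measures are precisely constant multiples of $\alpha h^{-\alpha-1}\,\dd h$. Since $\Lambda_c|_{(0,\infty)}$ is itself of this form, the ratio $m_A((a,b])/\Lambda_c((a,b])$ does not depend on $(a,b]$. Picking any reference interval $I_0\subset(0,\infty)$ with $\Lambda_c(I_0)>0$ and setting
\[
\mu(A):=\tilde\Lambda(I_0\times A)/\Lambda_c(I_0)
\]
therefore defines a probability measure $\mu$ on $\R^{V\setminus\Set{c}}$ (the normalization uses $\tilde\Lambda(I_0\times\R^{V\setminus\Set{c}})=\Lambda_c(I_0)$), yielding the product representation $\tilde\Lambda=\Lambda_c|_{(0,\infty)}\otimes\mu$.

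Pulling this back through $\phi^{-1}(h,y)=(h,hy)$ gives, for all Borel $R_c\subseteq(0,\infty)$ and $B\subseteq\R^{V\setminus\Set{c}}$,
\[
\Lambda(R_c\times B)=\tilde\Lambda\bigl(\Set{(h,y):h\in R_c,\,hy\in B}\bigr)=\int_{R_c}\mu(B/h)\,\Lambda_c(\dd h),
\]
where $B/h:=\Set{y:hy\in B}$. By the $\Lambda_c$-a.s.\ uniqueness of the kernel $\nu_{\Set{c}}$ in its defining integral identity, this forces $\nu_{\Set{c}}(h,B)=\mu(B/h)$ for $\Lambda_c$-almost every $h>0$. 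Setting $\xi\ph{(c,+)}_c=1$ and giving $\xi\ph{(c,+)}_{V\setminus\Set{c}}$ the law $\mu$ then produces $\Prob{h\xi\ph{(c,+)}_{V\setminus\Set{c}}\in B}=\mu(B/h)$, which is precisely the representation claimed in the lemma.

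The main technical hurdle is the factorization of $\tilde\Lambda$, which rests on the one-dimensional classification of $-\alpha$-homogeneous Borel measures on $(0,\infty)$ as multiples of $\alpha h^{-\alpha-1}\,\dd h$. Once this polar decomposition is established, the remainder is routine bookkeeping with $\phi$ and the defining integral equation for $\nu_{\Set{c}}$.
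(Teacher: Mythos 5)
Your proof is correct and follows essentially the same route as the paper: both arguments exploit the $-\alpha$-homogeneity of $\Lambda$ in the radial variable $x_c$ with angular variable $x_{V\setminus\{c\}}/x_c$ (the paper's sets $\Delta^+_{E}$ are exactly $\phi^{-1}([1,\infty)\times E)$ and its measure $\mu_c^+$ is your $\mu$ with $I_0=[1,\infty)$), and both conclude via the $\Lambda_c$-a.e.\ uniqueness of the kernel $\nu_{\{c\}}$. Your explicit one-dimensional classification of $-\alpha$-homogeneous measures on $(0,\infty)$ is a slightly more systematic packaging of the scaling identity $\Lambda(\epsilon\Delta^+_E)=\epsilon^{-\alpha}\Lambda(\Delta^+_E)$ that the paper uses directly.
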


\begin{proof}

For any Borel set $E_{V\setminus\Set{c}}\subseteq\R^{V\setminus\Set{c}}$ we introduce an auxiliary set $\Delta^+_{E_{V\setminus \Set{c}}}=\Set{(h,hx_{V\setminus\Set{c}})\given h\geq1,x_{V\setminus\Set{c}}\in E_{V\setminus\Set{c}}}$ as illustrated in Figure~\ref{fig:Delta_set_illustration} below. We can then define a probability measure $\mu_c^+$ by
\begin{equation*}
\mu^+_c(E_{V\setminus\Set{c}})=\Lambda(\Delta^+_{E_{V\setminus\Set{c}}})/\Lambda_c([1,\infty)),\For{$E_{V\setminus\Set{c}}\in\B(\R^{V\setminus{\Set{c}}})$}.
\end{equation*}
Now, let $\xi\ph{(c,+)}$ be a $d$-dimensional random vector such that $\xi\ph{(c,+)}_{V\setminus\Set{c}}\sim\mu_c^+$ and $\xi\ph{(c,+)}_c=1$.
For any $\epsilon>0$ and $h>0$ we have
\begin{align*}
\int_{[\epsilon,\infty)}\nu_{\Set{c}}(h,hE_{V\setminus\Set{c}})\,\Lambda_c(\dd h)&=\Lambda(\epsilon\Delta^+_{E_{V\setminus\Set{c}}}) \\
&=\epsilon^{-\alpha}\Lambda(\Delta^+_{R_{A\cup B}}) \\
&=\epsilon^{-\alpha}\Lambda_c([1,\infty))\mu^+_c(E_{V\setminus\Set{c}}) \\
&=\Lambda_c([\epsilon,\infty))\mu^+_c(E_{V\setminus\Set{c}}).
\end{align*}
Hence, $\nu_{\Set{c}}(h,hE_{V\setminus\Set{c}})=\mu_c^+(E_{V\setminus{c}})$ for $\Lambda_c$-almost all $h\geq\epsilon$. Using standard arguments we may extend this to
\begin{equation*}
\nu_{\Set{c}}(h,hE_{V\setminus\Set{c}})=\mu_c^+(E_{V\setminus\Set{c}})\For*{for all $E_{V\setminus\Set{c}}\in\B(\R^{V\setminus\Set{c}})$}
\end{equation*}
for $\Lambda_c$-almost all $h>0$. Hence,
\begin{equation*}
\nu_{\Set{c}}(h,E_{V\setminus\Set{c}})=\mu_c^+(h^{-1}E_{V\setminus\Set{c}})=\Prob{h\xi\ph{(c,+)}\in E_{V\setminus\Set{c}}}\For*{for all $E_{V\setminus\Set{c}}\in\B(\R^{V\setminus\Set{c}})$}
\end{equation*}
for $\Lambda_c$-almost all $h>0$.

\begin{figure}[tb]
\begin{tikzpicture}
\begin{axis}[axis lines=left,tick pos=left, width=.7\linewidth,height=6cm, ymin=0,ymax=5,ytick={1},xmin=0,xmax=10,xtick={1,2},xticklabels={\(E_{V\setminus\Set{c}}\),},x tick label style={anchor=north west},tick align=outside, major tick length=2.75pt,every tick/.style={black},ylabel=\(h\),ylabel style={rotate=-90}]
\draw[dashed,color=black!40] (axis cs:0,1) to (axis cs:10,1);
\addplot+[AUblue,domain=1:2,mark=none] {1};
\addplot+[name path=A,AUblue,domain=1:5,mark=none] {x};
\addplot+[name path=B,AUblue,domain=2:10,mark=none] {x/2};
\addplot[nodecolor] fill between[of=A and B];
\node[color=AUblue] at (axis cs: 5,3.5) {$\Delta^+_{E_{V\setminus\Set{c}}}$};
\end{axis}
\end{tikzpicture}
\caption{Illustration of the set $\Delta^+_{E_{V\setminus\Set{c}}}$.}
\label{fig:Delta_set_illustration}
\end{figure}
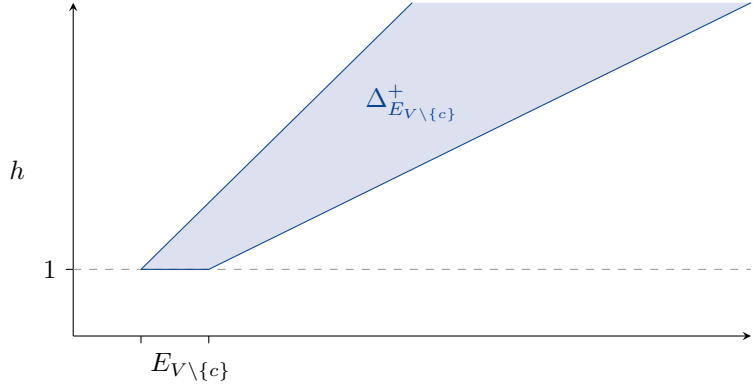

\end{proof}

\begin{lemma}\label{lem:matrix}
Consider a positive semi-definite block matrix of the form
\[M=\begin{pmatrix} 
\delta & \delta m_1^\top & m_2^\top\\
\delta m_1 & \delta M_{11} &m_1 m_2^\top\\
m_2 & m_2m_1^\top &M_{22}\\
\end{pmatrix},\]
where $\delta$ is a scalar, $M_{11},M_{22}$ are square matrices, and $m_1,m_2$ are vectors of appropriate dimensions. If $M$ is positive-definite (i.e., invertible) then the $(2,3)$ and $(3,2)$ blocks of  $M^{-1}$ are zero.
\end{lemma}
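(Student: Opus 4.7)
The plan is to prove Lemma~\ref{lem:matrix} by a direct block-inversion computation using the Schur complement. Partition $M$ as a $2\times 2$ block matrix
\[
M = \begin{pmatrix} A & B \\ B^\top & D \end{pmatrix},
\qquad
A = \delta,
\quad
B = \begin{pmatrix} \delta m_1^\top & m_2^\top \end{pmatrix},
\quad
D = \begin{pmatrix} \delta M_{11} & m_1 m_2^\top \\ m_2 m_1^\top & M_{22} \end{pmatrix},
\]
where $A$ is the $1\times 1$ scalar $\delta$, which is positive because $M$ is positive definite. By the standard block-inversion formula, the bottom-right block of $M^{-1}$ (which contains precisely the $(2,2)$, $(2,3)$, $(3,2)$, $(3,3)$ blocks in the original three-block partition) equals the inverse of the Schur complement $S = D - B^\top A^{-1} B$.

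The key step is to compute $S$ and observe that it is block diagonal. Indeed,
\[
B^\top A^{-1} B = \frac{1}{\delta}\begin{pmatrix} \delta m_1 \\ m_2 \end{pmatrix} \begin{pmatrix} \delta m_1^\top & m_2^\top \end{pmatrix} = \begin{pmatrix} \delta m_1 m_1^\top & m_1 m_2^\top \\ m_2 m_1^\top & m_2 m_2^\top/\delta \end{pmatrix},
\]
so that the off-diagonal blocks of $D$ and of $B^\top A^{-1} B$ cancel exactly, yielding
\[
S = \begin{pmatrix} \delta(M_{11} - m_1 m_1^\top) & 0 \\ 0 & M_{22} - m_2 m_2^\top/\delta \end{pmatrix}.
\]
Since $M$ is invertible, so is $S$, and its inverse inherits the block-diagonal structure. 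Therefore the $(2,3)$ and $(3,2)$ blocks of $M^{-1}$ vanish, as claimed.

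I expect no real obstacle here: the lemma is a purely algebraic identity and the cancellation driving it is visible once the Schur complement is written out. The only mild care needed is to verify that $\delta>0$ so that $A^{-1}$ is well-defined; this follows from positive definiteness of $M$, since a positive-definite matrix has all principal submatrices positive definite. An equivalent conceptual route would be to interpret $M$ as the covariance of a centred Gaussian vector $(X_1,X_2,X_3)$ with $X_1$ scalar, verify that $\Cov(X_2,X_3)=\Cov(X_2,X_1)\Cov(X_1,X_1)^{-1}\Cov(X_1,X_3)=m_1 m_2^\top$, conclude $X_2\Perp X_3\mid X_1$, and then invoke the standard fact that conditional independence corresponds to zero blocks of the precision matrix; but the direct Schur-complement calculation above is both shorter and self-contained.
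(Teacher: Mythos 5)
Your proof is correct, and it takes a genuinely different (and arguably cleaner) block decomposition than the paper's. The paper also uses the standard block-inversion formula, but it partitions $M$ the other way around, taking the top-left block to be the $\{1,2\}$-corner and the bottom-right block to be $M_{22}$; it then needs a two-stage computation (first inverting $\bigl(\begin{smallmatrix}1 & m_1^\top\\ m_1 & M_{11}\end{smallmatrix}\bigr)$ by a nested application of the same formula, then writing out the $(2,3)$ block of $M^{-1}$ as an explicit product and observing that the terms $-(M_{11}-m_1m_1^\top)^{-1}m_1\,m_2^\top$ and $(M_{11}-m_1m_1^\top)^{-1}m_1\,m_2^\top$ cancel). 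You instead take the scalar $\delta$ as the pivot block, and the entire lemma reduces to the single observation that the Schur complement $S=D-B^\top\delta^{-1}B$ is block diagonal, after which block-diagonality of $S^{-1}$ is immediate. Your computation of $B^\top A^{-1}B$ and the resulting cancellation of the off-diagonal blocks is correct, and the two points of care (that $\delta>0$ follows from positive definiteness of $M$, and that $S$ is invertible because $M$ is) are both handled. The two arguments expose the same underlying structure — the off-diagonal blocks of $M$ are exactly the rank-one products forced by conditional independence given the first coordinate, which is the content of your closing Gaussian remark — but your partition makes the cancellation visible in one step rather than two, at the cost of nothing; the paper's partition isolates the $(2,3)$ block of the inverse directly, which is marginally closer to the statement being proved but requires more bookkeeping.
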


\begin{proof}
For a positive definite block matrix $X = \begin{pmatrix}P &Q\\ R & S\end{pmatrix} $ with square blocks $P,S$ it is well known that

\begin{equation*}
X^{-1}=\begin{pmatrix}
P^{-1}+P^{-1}QYRP^{-1} &-P^{-1}QY \\
-YRP^{-1} & Y
\end{pmatrix}, \qquad Y=(S-RP^{-1}Q)^{-1}.
\end{equation*}
Firstly, we have
\begin{equation*}
\begin{pmatrix} 
1 & m_1^\top\\
m_1 & M_{11} 
\end{pmatrix}^{-1}=\begin{pmatrix} 
\cdot & \cdot\\
-(M_{11}-m_1m_1^\top)^{-1}m_1 & (M_{11}-m_1m_1^\top)^{-1}
\end{pmatrix},
\end{equation*}
where the first row is not important to us.
Secondly, we partition $M$ so that $S=M_{22}$ and obtain the following representation of the  $(2,3)$ block of $M^{-1}$:
\begin{equation*}-\delta^{-1}[-(M_{11}-m_1m_1^\top)^{-1}m_1, (M_{11}-m_1m_1^\top)^{-1}]\begin{pmatrix} 
m_2^\top\\
m_1m_2^\top
\end{pmatrix}Y\\
=0.
\end{equation*}
\end{proof}

\begin{proof}[Proof of Theorem~\ref{thm:BM}]
Without loss of generality we may assume that $C=\{1\}$, the elements of~$A$ are smaller than the elements of~$B$, and $U\subseteq B\cup\{1\}$. According to Lemma~\ref{lem:matrix} it is sufficient to establish that $\Sigma $ has the form of~$M$. 

The marginal measure is given by
\begin{equation*}
\Lambda_1(\dd x_1)=m_1\alpha x_1^{-\a-1}\dd x_1,\For{$x_1>0$,}
\end{equation*}
where $m_1=\Lambda(\Set{x_1>1})$.
For $x_1>0$ consider the kernel $\nu_{\Set{1}}(x_1,\cdot)$. Recall that Lemma~\ref{lem:mu_measures} gives the existence of a $d$-dimensional random vector $\xi\ph{(1)}$ such that $\xi\ph{(1)}_1=1$ a.s.\ and
\begin{equation*}
\nu_{\Set{1}}(x_1,\cdot)=\p(x_1 \xi\ph{(1)}_{V\setminus\Set{1}}\in \cdot\,)
\end{equation*}
for $\Lambda_1$-almost all $x_1>0$.

For $i\in A\cup\Set{1}$ and $j\in B\cup\Set{1}$ we may compute
\begin{align*}
\Sigma_{ij}&=\int_{\Set{\abs{x_u}\leq 1\,\forall u\in U}} x_ix_j\,\Lambda(\dd x) \\
&=m_1\alpha\e\left[\int_{\Set{x_1>0,\abs{x_1\xi\ph{(1)}_u}\leq 1\,\forall u\in U}} (x_1\xi\ph{(1)}_i)(x_1\xi\ph{(1)}_j)  x_1^{-\a-1}\idd x_1\right] \\
&=\frac{m_1\alpha}{2-\a} \e \left[\xi\ph{(1)}_i\xi\ph{(1)}_j\min_{u\in U}\abs{\xi\ph{(1)}_u}^{\a-2}\right].
\end{align*}
For the second equality we used that $x_1=0$ has no contribution to~$\Sigma$. If $i=1$ or $j=1$ this is obvious, and if $i\in A,j\in B$ we recall that $A\perp B\mid \Set{1}\,[\Lambda]$ implies $\Lambda^0_{V\setminus\{1\}}(\Set{x_i\neq 0,x_j\neq 0})=0$, see \citet[Lemma~3.3 and~Proposition~5.1]{eng_iva_kir}.

According to \citet[Theorem~4.4]{eng_iva_kir} the conditional independence $A \perp B \mid \{1\}\,[\Lambda]$ implies $\xi\ph{(1)}_A \Perp \xi\ph{(1)}_B$.
Letting $\delta = \frac{m_1\alpha}{2-\a} \e \left[\min_{u\in U}\abs{\xi\ph{(1)}_u}^{\a-2}\right]$ we find that 
\begin{align*}
\Sigma_{11}&=\delta, \\
\Sigma_{1i}&=\delta\Mean{\xi\ph{(1)}_i}, \\
\Sigma_{1j}&=\frac{m_1\alpha}{2-\a} \e \Big[\xi_j\min_{u\in U}\abs{\xi\ph{(1)}_u}^{\a-2}\Big], \\
\Sigma_{ij}&=\frac{m_1\alpha}{2-\a} \Mean{\xi\ph{(1)}_i}\e\Big[\xi_j\min_{u\in U}\abs{\xi\ph{(1)}_u}^{\a-2}\Big].
\end{align*}
This shows that $\Sigma$ has the form of $M$ in Lemma~\ref{lem:matrix}.

\end{proof}

\bibliographystyle{abbrvnat}
\bibliography{ref}

\begin{thebibliography}{56}
\providecommand{\natexlab}[1]{#1}
\providecommand{\url}[1]{\texttt{#1}}
\expandafter\ifx\csname urlstyle\endcsname\relax
  \providecommand{\doi}[1]{doi: #1}\else
  \providecommand{\doi}{doi: \begingroup \urlstyle{rm}\Url}\fi

\bibitem[Applebaum(2009)]{applebaum}
D.~Applebaum.
\newblock \emph{L\'{e}vy processes and stochastic calculus}, volume 116 of
  \emph{Cambridge Studies in Advanced Mathematics}.
\newblock Cambridge University Press, Cambridge, second edition, 2009.
\newblock ISBN 978-0-521-73865-1.
\newblock \doi{10.1017/CBO9780511809781}.
\newblock URL \url{https://doi.org/10.1017/CBO9780511809781}.

\bibitem[Asenova et~al.(2021)Asenova, Mazo, and Segers]{asenova2021inference}
S.~Asenova, G.~Mazo, and J.~Segers.
\newblock Inference on extremal dependence in the domain of attraction of a
  structured {Hüsler--Reiss} distribution motivated by a {Markov} tree with
  latent variables.
\newblock \emph{Extremes}, 24:\penalty0 461--500, 2021.

\bibitem[Barndorff-Nielsen and Lindner(2007)]{bar2007}
O.~E. Barndorff-Nielsen and A.~M. Lindner.
\newblock L\'{e}vy copulas: dynamics and transforms of upsilon type.
\newblock \emph{Scand. J. Statist.}, 34\penalty0 (2):\penalty0 298--316, 2007.
\newblock ISSN 0303-6898,1467-9469.
\newblock \doi{10.1111/j.1467-9469.2006.00527.x}.
\newblock URL \url{https://doi.org/10.1111/j.1467-9469.2006.00527.x}.

\bibitem[Basawa and Brockwell(1982)]{bas1982}
I.~V. Basawa and P.~J. Brockwell.
\newblock Non-parametric estimation for non-decreasing lévy processes.
\newblock \emph{Journal of the Royal Statistical Society. Series B
  (Methodological)}, 44\penalty0 (2):\penalty0 262--269, 1982.
\newblock ISSN 00359246.
\newblock URL \url{http://www.jstor.org/stable/2345832}.

\bibitem[Bertoin(1996)]{bertoin96}
J.~Bertoin.
\newblock \emph{L\'{e}vy processes}, volume 121 of \emph{Cambridge Tracts in
  Mathematics}.
\newblock Cambridge University Press, Cambridge, 1996.
\newblock ISBN 0-521-56243-0.

\bibitem[Blumentahl and Getoor(1961)]{blu1961}
R.~M. Blumentahl and R.~K. Getoor.
\newblock Sample functions of stochastic processes with stationary independent
  increments.
\newblock \emph{Journal of Mathematics and Mechanics}, 10\penalty0
  (3):\penalty0 493--516, 1961.

\bibitem[Bolin and Wallin(2019)]{bol2019}
D.~Bolin and J.~Wallin.
\newblock {Multivariate Type G Matérn Stochastic Partial Differential Equation
  Random Fields}.
\newblock \emph{Journal of the Royal Statistical Society Series B: Statistical
  Methodology}, 82\penalty0 (1):\penalty0 215--239, 2019.
\newblock \doi{10.1111/rssb.12351}.
\newblock URL \url{https://doi.org/10.1111/rssb.12351}.

\bibitem[B{\"u}cher and Vetter(2013)]{bue2013}
A.~B{\"u}cher and M.~Vetter.
\newblock Nonparametric inference on l{\'e}vy measures and copulas.
\newblock \emph{The Annals of Statistics}, 41\penalty0 (3):\penalty0
  1485--1515, 2013.

\bibitem[Cohen and Rosi\'{n}ski(2007)]{cohen-rosinski}
S.~Cohen and J.~Rosi\'{n}ski.
\newblock Gaussian approximation of multivariate {L}\'{e}vy processes with
  applications to simulation of tempered stable processes.
\newblock \emph{Bernoulli}, 13\penalty0 (1):\penalty0 195--210, 2007.
\newblock ISSN 1350-7265.
\newblock \doi{10.3150/07-BEJ6011}.
\newblock URL \url{https://doi.org/10.3150/07-BEJ6011}.

\bibitem[Coles et~al.(1999)Coles, Heffernan, and Tawn]{col1999}
S.~Coles, J.~Heffernan, and J.~Tawn.
\newblock Dependence measures for extreme value analyses.
\newblock \emph{Extremes}, 2:\penalty0 339--365, 1999.

\bibitem[Cont and Tankov(2003)]{tankov2003financial}
R.~Cont and P.~Tankov.
\newblock \emph{Financial modelling with jump processes}.
\newblock Chapman {\&} Hall/CRC, 2003.

\bibitem[de~Haan and Ferreira(2006)]{deh2006a}
L.~de~Haan and A.~Ferreira.
\newblock \emph{Extreme Value Theory}.
\newblock Springer, New York, 2006.

\bibitem[D{\k{e}}bnicki and Mandjes(2015)]{krz2015}
K.~D{\k{e}}bnicki and M.~Mandjes.
\newblock \emph{Queues and {L}\'{e}vy fluctuation theory}.
\newblock Universitext. Springer, Cham, 2015.
\newblock ISBN 978-3-319-20692-9; 978-3-319-20693-6.
\newblock \doi{10.1007/978-3-319-20693-6}.
\newblock URL \url{https://doi.org/10.1007/978-3-319-20693-6}.

\bibitem[Didelez(2008)]{didelez_08}
V.~Didelez.
\newblock Graphical models for marked point processes based on local
  independence.
\newblock \emph{Journal of the Royal Statistical Society. Series B (Statistical
  Methodology)}, 70\penalty0 (1):\penalty0 245--264, 2008.
\newblock ISSN 13697412, 14679868.
\newblock URL \url{http://www.jstor.org/stable/20203821}.

\bibitem[Dombry et~al.(2016)Dombry, Engelke, and Oesting]{dom2016}
C.~Dombry, S.~Engelke, and M.~Oesting.
\newblock Exact simulation of max-stable processes.
\newblock \emph{Biometrika}, 103:\penalty0 303--317, 2016.

\bibitem[Drton and Maathuis(2017)]{drt2017}
M.~Drton and M.~H. Maathuis.
\newblock Structure learning in graphical modeling.
\newblock \emph{Annu. Rev. Stat. Appl.}, 4:\penalty0 365--393, 2017.

\bibitem[Eder and Kl\"{u}ppelberg(2012)]{eder_klup}
I.~Eder and C.~Kl\"{u}ppelberg.
\newblock Pareto {L}\'{e}vy measures and multivariate regular variation.
\newblock \emph{Adv. in Appl. Probab.}, 44\penalty0 (1):\penalty0 117--138,
  2012.
\newblock ISSN 0001-8678.
\newblock \doi{10.1239/aap/1331216647}.
\newblock URL \url{https://doi.org/10.1239/aap/1331216647}.

\bibitem[Eichler(2012)]{eichler_12}
M.~Eichler.
\newblock Graphical modelling of multivariate time series.
\newblock \emph{Probability Theory and Related Fields}, 153\penalty0
  (1):\penalty0 233--268, 2012.
\newblock \doi{10.1007/s00440-011-0345-8}.
\newblock URL \url{https://doi.org/10.1007/s00440-011-0345-8}.

\bibitem[Engelke and Hitz(2020)]{engelke-hitz}
S.~Engelke and A.~S. Hitz.
\newblock Graphical models for extremes.
\newblock \emph{J. R. Stat. Soc. Ser. B. Stat. Methodol.}, 82\penalty0
  (4):\penalty0 871--932, 2020.
\newblock ISSN 1369-7412.
\newblock With discussions.

\bibitem[Engelke and Ivanovs(2016)]{engelke2016}
S.~Engelke and J.~Ivanovs.
\newblock {A Lévy-derived process seen from its supremum and max-stable
  processes}.
\newblock \emph{Electronic Journal of Probability}, 21\penalty0
  (none):\penalty0 1 -- 19, 2016.
\newblock \doi{10.1214/16-EJP1112}.
\newblock URL \url{https://doi.org/10.1214/16-EJP1112}.

\bibitem[Engelke and Kabluchko(2015)]{engelke2015}
S.~Engelke and Z.~Kabluchko.
\newblock Max-stable processes and stationary systems of lévy particles.
\newblock \emph{Stochastic Processes and their Applications}, 125\penalty0
  (11):\penalty0 4272--4299, 2015.
\newblock ISSN 0304-4149.
\newblock \doi{https://doi.org/10.1016/j.spa.2015.07.001}.
\newblock URL
  \url{https://www.sciencedirect.com/science/article/pii/S030441491500157X}.

\bibitem[Engelke and Taeb(2024)]{engelke2024extremal}
S.~Engelke and A.~Taeb.
\newblock Extremal graphical modeling with latent variables.
\newblock \emph{arXiv:2403.09604}, 2024.

\bibitem[Engelke and Volgushev(2022)]{engelke-volgushev}
S.~Engelke and S.~Volgushev.
\newblock Structure learning for extremal tree models.
\newblock \emph{J. R. Stat. Soc. Ser. B. Stat. Methodol.}, 84\penalty0
  (5):\penalty0 2055--2087, 2022.
\newblock ISSN 1369-7412,1467-9868.

\bibitem[Engelke et~al.(2022{\natexlab{a}})Engelke, Hitz, Gnecco, and
  Hentschel]{graphicalExtremes2022}
S.~Engelke, A.~S. Hitz, N.~Gnecco, and M.~Hentschel.
\newblock graphical{E}xtremes: Statistical methodology for graphical extreme
  value models.
\newblock \emph{R Package Version 0.3.2}, 2022{\natexlab{a}}.

\bibitem[Engelke et~al.(2022{\natexlab{b}})Engelke, Ivanovs, and
  Strokorb]{eng_iva_kir}
S.~Engelke, J.~Ivanovs, and K.~Strokorb.
\newblock Graphical models for infinite measures with applications to extremes.
\newblock \emph{arXiv:2211.15769}, 2022{\natexlab{b}}.

\bibitem[Engelke et~al.(2022{\natexlab{c}})Engelke, Lalancette, and
  Volgushev]{engelke2022a}
S.~Engelke, M.~Lalancette, and S.~Volgushev.
\newblock Learning extremal graphical structures in high dimensions.
\newblock \emph{arXiv:2111.00840}, 2022{\natexlab{c}}.

\bibitem[Fasen(2005)]{Fasen_2005}
V.~Fasen.
\newblock Extremes of regularly varying lévy-driven mixed moving average
  processes.
\newblock \emph{Advances in Applied Probability}, 37\penalty0 (4):\penalty0
  993–1014, 2005.
\newblock \doi{10.1239/aap/1134587750}.

\bibitem[Fomichov et~al.(2021)Fomichov, Cázares, and Ivanovs]{FOMICHOV2021407}
V.~Fomichov, J.~G. Cázares, and J.~Ivanovs.
\newblock Implementable coupling of {Lé}vy process and {B}rownian motion.
\newblock \emph{Stochastic Processes and their Applications}, 142:\penalty0
  407--431, 2021.
\newblock ISSN 0304-4149.
\newblock \doi{https://doi.org/10.1016/j.spa.2021.09.008}.
\newblock URL
  \url{https://www.sciencedirect.com/science/article/pii/S0304414921001502}.

\bibitem[Granger(1969)]{granger_69}
C.~W.~J. Granger.
\newblock Investigating causal relations by econometric models and
  cross-spectral methods.
\newblock \emph{Econometrica}, 37\penalty0 (3):\penalty0 424--438, 1969.
\newblock ISSN 00129682, 14680262.
\newblock URL \url{http://www.jstor.org/stable/1912791}.

\bibitem[Hentschel et~al.(2022)Hentschel, Engelke, and Segers]{hen_eng_seg}
M.~Hentschel, S.~Engelke, and J.~Segers.
\newblock Statistical inference for {H\"u}sler--{R}eiss graphical models
  through matrix completions.
\newblock \emph{arXiv:2210.14292}, 2022.

\bibitem[Hu et~al.(2022)Hu, Peng, and Segers]{hu_peng_segers}
S.~Hu, Z.~Peng, and J.~Segers.
\newblock Modelling multivariate extreme value distributions via markov trees.
\newblock \emph{arXiv:2208.02627}, 2022.

\bibitem[H{\"u}sler and Reiss(1989)]{HR1989}
J.~H{\"u}sler and R.-D. Reiss.
\newblock Maxima of normal random vectors: between independence and complete
  dependence.
\newblock \emph{Stat. Probab. Lett.}, 7\penalty0 (4):\penalty0 283--286, 1989.

\bibitem[Jacod(2012)]{jac2012}
J.~Jacod.
\newblock Statistics and high-frequency data.
\newblock In \emph{Statistical methods for stochastic differential equations},
  volume 124 of \emph{Monogr. Statist. Appl. Probab.}, pages 191--310. CRC
  Press, Boca Raton, FL, 2012.
\newblock ISBN 978-1-4398-4940-8.
\newblock \doi{10.1201/b12126-4}.
\newblock URL \url{https://doi.org/10.1201/b12126-4}.

\bibitem[Jacod and Shiryaev(2003)]{jacod_shiryaev}
J.~Jacod and A.~N. Shiryaev.
\newblock \emph{Limit theorems for stochastic processes}, volume 288 of
  \emph{Grundlehren der mathematischen Wissenschaften [Fundamental Principles
  of Mathematical Sciences]}.
\newblock Springer-Verlag, Berlin, second edition, 2003.
\newblock ISBN 3-540-43932-3.
\newblock \doi{10.1007/978-3-662-05265-5}.
\newblock URL \url{https://doi.org/10.1007/978-3-662-05265-5}.

\bibitem[Joe(2014)]{joe2014dependence}
H.~Joe.
\newblock \emph{Dependence modeling with copulas}.
\newblock Chapman and Hall/CRC, 2014.

\bibitem[Kabluchko(2009)]{Kabluchko2009}
Z.~Kabluchko.
\newblock Spectral representations of sum- and max-stable processes.
\newblock \emph{Extremes}, 12:\penalty0 401--424, 2009.
\newblock \doi{10.1007/s10687-009-0083-9}.
\newblock URL \url{https://doi.org/10.1007/s10687-009-0083-9}.

\bibitem[Kallenberg(2021)]{kallenberg3}
O.~Kallenberg.
\newblock \emph{Foundations of modern probability}, volume~99 of
  \emph{Probability Theory and Stochastic Modelling}.
\newblock Springer, Cham, third edition, 2021.
\newblock ISBN 978-3-030-61871-1; 978-3-030-61870-4.
\newblock \doi{10.1007/978-3-030-61871-1}.
\newblock URL \url{https://doi.org/10.1007/978-3-030-61871-1}.

\bibitem[Kallsen and Tankov(2006)]{kallsen_tankov}
J.~Kallsen and P.~Tankov.
\newblock Characterization of dependence of multidimensional {L}\'{e}vy
  processes using {L}\'{e}vy copulas.
\newblock \emph{J. Multivariate Anal.}, 97\penalty0 (7):\penalty0 1551--1572,
  2006.
\newblock ISSN 0047-259X.
\newblock \doi{10.1016/j.jmva.2005.11.001}.
\newblock URL \url{https://doi.org/10.1016/j.jmva.2005.11.001}.

\bibitem[Kl\"{u}ppelberg and Resnick(2008)]{klu2008}
C.~Kl\"{u}ppelberg and S.~I. Resnick.
\newblock The {P}areto copula, aggregation of risks, and the emperor's socks.
\newblock \emph{J. Appl. Probab.}, 45\penalty0 (1):\penalty0 67--84, 2008.
\newblock ISSN 0021-9002,1475-6072.
\newblock \doi{10.1239/jap/1208358952}.
\newblock URL \url{https://doi.org/10.1239/jap/1208358952}.

\bibitem[Lauritzen(1996)]{lauritzen96}
S.~L. Lauritzen.
\newblock \emph{Graphical models}, volume~17 of \emph{Oxford Statistical
  Science Series}.
\newblock The Clarendon Press, Oxford University Press, New York, 1996.
\newblock ISBN 0-19-852219-3.
\newblock Oxford Science Publications.

\bibitem[Lederer and Oesting(2023)]{lederer2023extremes}
J.~Lederer and M.~Oesting.
\newblock Extremes in high dimensions: Methods and scalable algorithms.
\newblock \emph{arXiv:2303.04258}, 2023.

\bibitem[Misra and Kuruoglu(2016)]{mis_kur_16}
N.~Misra and E.~E. Kuruoglu.
\newblock Stable graphical models.
\newblock \emph{Journal of Machine Learning Research}, 17\penalty0
  (168):\penalty0 1--36, 2016.
\newblock URL \url{http://jmlr.org/papers/v17/14-150.html}.

\bibitem[Mogensen and Hansen(2022)]{mog_han_22}
S.~W. Mogensen and N.~R. Hansen.
\newblock {Graphical modeling of stochastic processes driven by correlated
  noise}.
\newblock \emph{Bernoulli}, 28\penalty0 (4):\penalty0 3023 -- 3050, 2022.
\newblock \doi{10.3150/21-BEJ1446}.
\newblock URL \url{https://doi.org/10.3150/21-BEJ1446}.

\bibitem[Muvunza et~al.(2024)Muvunza, Li, and Kuruoglu]{muvunza2024cauchy}
T.~Muvunza, Y.~Li, and E.~E. Kuruoglu.
\newblock Cauchy graphical models.
\newblock In \emph{International Conference on Probabilistic Graphical Models},
  pages 528--542. PMLR, 2024.

\bibitem[Neumann and Rei{\ss}(2009)]{reiss2009}
M.~H. Neumann and M.~Rei{\ss}.
\newblock {Nonparametric estimation for Lévy processes from low-frequency
  observations}.
\newblock \emph{Bernoulli}, 15\penalty0 (1):\penalty0 223 -- 248, 2009.
\newblock \doi{10.3150/08-BEJ148}.
\newblock URL \url{https://doi.org/10.3150/08-BEJ148}.

\bibitem[R\"{o}ttger et~al.(2023)R\"{o}ttger, Engelke, and Zwiernik]{REZ2023}
F.~R\"{o}ttger, S.~Engelke, and P.~Zwiernik.
\newblock Total positivity in multivariate extremes.
\newblock \emph{Ann. Statist.}, 51\penalty0 (3):\penalty0 962--1004, 2023.
\newblock ISSN 0090-5364,2168-8966.
\newblock \doi{10.1214/23-aos2272}.
\newblock URL \url{https://doi.org/10.1214/23-aos2272}.

\bibitem[Röttger and Schmitz(2023)]{roettgerSchmitz2023}
F.~Röttger and Q.~Schmitz.
\newblock On the local metric property in multivariate extremes.
\newblock \emph{arXiv:2212.10350}, 2023.

\bibitem[Sato(1999)]{sato99}
K.-i. Sato.
\newblock \emph{L\'{e}vy processes and infinitely divisible distributions},
  volume~68 of \emph{Cambridge Studies in Advanced Mathematics}.
\newblock Cambridge University Press, Cambridge, 1999.
\newblock ISBN 0-521-55302-4.
\newblock Translated from the 1990 Japanese original, Revised by the author.

\bibitem[Schlather and Tawn(2003)]{sch2003}
M.~Schlather and J.~A. Tawn.
\newblock A dependence measure for multivariate and spatial extreme values:
  Properties and inference.
\newblock \emph{Biometrika}, 90:\penalty0 139--156, 2003.

\bibitem[Segers(2020)]{segers2020}
J.~Segers.
\newblock One- versus multi-component regular variation and extremes of
  {Markov} trees.
\newblock \emph{Adv. in Appl. Probab.}, 52:\penalty0 855--878, 2020.

\bibitem[Tankov(2004)]{tankov_phd}
P.~Tankov.
\newblock \emph{{L{\'e}vy Processes in Finance: Inverse Problems and Dependence
  Modelling}}.
\newblock {PhD} thesis, {Ecole Polytechnique, France}, 2004.
\newblock URL \url{https://pastel.hal.science/tel-00007944}.

\bibitem[Wainwright and Jordan(2008)]{wainwright2008}
M.~Wainwright and M.~Jordan.
\newblock Graphical models, exponential families, and variational inference.
\newblock \emph{Foundations and Trends in Machine Learning}, 1\penalty0
  (1--2):\penalty0 1--305, 2008.

\bibitem[Wan and Zhou(2023)]{wan2023graphical}
P.~Wan and C.~Zhou.
\newblock Graphical lasso for extremes.
\newblock \emph{arXiv:2307.15004}, 2023.

\bibitem[Wang and Stoev(2010)]{wan2010}
Y.~Wang and S.~A. Stoev.
\newblock On the association of sum- and max-stable processes.
\newblock \emph{Statistics {\&} Probability Letters}, 80\penalty0 (5):\penalty0
  480--488, 2010.
\newblock ISSN 0167-7152.
\newblock \doi{https://doi.org/10.1016/j.spl.2009.12.001}.

\bibitem[Woyczy{\'{n}}ski(2001)]{woy2001}
W.~A. Woyczy{\'{n}}ski.
\newblock \emph{L{\'e}vy Processes in the Physical Sciences}, pages 241--266.
\newblock Birkh{\"a}user Boston, Boston, MA, 2001.

\bibitem[Zolotarev(1986)]{zol1986}
V.~M. Zolotarev.
\newblock \emph{One-dimensional stable distributions}, volume~65 of
  \emph{Translations of Mathematical Monographs}.
\newblock American Mathematical Society, Providence, RI, 1986.
\newblock ISBN 0-8218-4519-5.
\newblock \doi{10.1090/mmono/065}.
\newblock URL \url{https://doi.org/10.1090/mmono/065}.
\newblock Translated from the Russian by H. H. McFaden.

\end{thebibliography}

\end{document}